\theoremstyle{plain} % default
\newtheorem{theorem} {Theorem}
\newtheorem{proposition} {Proposition}
\newtheorem{lemma} {Lemma}
\newtheorem{corollary} {Corollary}
\theoremstyle{definition}
\newtheorem{definition} {Definition}
\theoremstyle{remark}
\newtheorem{remark} {Remark}
\newcommand{\mpair}[1]{\pair{\,#1\,}}
\newcommand{\mset}[1]{\set{\,#1\,}}
\newcommand{\pair}[1]{\langle #1\rangle}
\newcommand{\set}[1]{\{#1\}}
\newcommand{\COS}{{\text{\rm COS}}}
\newcommand{\inv}{^{-1}}
\newcommand{\nip}{np subset }
\newcommand{\nips}{np subsets }
\newcommand{\wh}{\widehat}
\newcommand{\wt}{\widetilde}
\newcommand{\nc}{\newcommand}
\nc{\on}{\operatorname}
\nc{\Z}{{\mathbb Z}}
\nc{\C}{{\mathbb C}}
\nc{\R}{{\mathbb R}}
\nc{\bbP}{{\mathbb P}}
\nc{\bF}{{\mathbb F}}
\nc{\boldD}{{\mathbb D}}
\nc{\oo}{{\mf O}}
\nc{\N}{{\mathbb N}}
\nc{\bib}{\bibitem}
\nc{\pa}{\partial}
\nc{\F}{{\mf F}}
\nc{\CA}{{\mathcal A}}
\nc{\CC}{{\mathcal C}}
\nc{\CE}{{\mathcal E}}
\nc{\CP}{{\mathcal P}}
\nc{\CO}{{\mathcal O}}
\nc{\CK}{{\mathcal K}}
\nc{\Ann}{\text{Ann}}
\nc{\Rad}{\text{Rad}}
\nc{\Res}{\text{Res}}
\nc{\Ind}{\text{Ind}}
\nc{\Ker}{\text{Ker}}
\nc{\id}{\text{id}}
\nc{\be}{\begin{equation}}
\nc{\ee}{\end{equation}}
\nc{\bee}{\begin{equation*}}
\nc{\eee}{\end{equation*}}
\nc{\RR}{{\mathcal R}}
\nc{\TT}{T}
\nc{\rh}{\vec}
\nc{\p}{{\wp}^{(\Gamma)}}
\renewcommand{\p@enumi}{\thesubsection}
\newenvironment{conds}{
                       
                        \begin{enumerate} }
                     {\end{enumerate} }
\newenvironment{num}{
                      
                      \begin{enumerate} }
                    {\end{enumerate} }
\newcommand{\Int}{{\mathbb Z}}
\newcommand{\Nat}{{\mathbb N}}
\newcommand{\rat}{{\mathbb Q}}
\newcommand{\real}{{\mathbb R}}
\newcommand{\mc}{\mathcal}
\newcommand{\ck}[1]{{#1}^\vee}
\begin{document}

\title{Reflection subgroups of finite and affine Weyl groups}
\author{M.~J.  Dyer and G.~I. Lehrer}
%\address{Department of Mathematics 
%\\ 255 Hurley Building\\ University of Notre Dame \\
%Notre Dame, Indiana 46556, U.S.A.}
%\email{dyer.1@nd.edu}
\address{School of Mathematics and Statistics\\
University of Sydney. Sydney. 2006. Australia.}
%%\dedicatory{}
%%\date{\today}
%%\thanks{}
%%\keywords{}
\subjclass{20F55, 51F15}
\begin{abstract}
   We discuss the classification of
   reflection subgroups of finite and affine Weyl groups
   from the point of view of their root systems. A short case free proof is given of the 
   well known classification of the isomorphism classes of reflection
   subgroups using completed Dynkin diagrams, for which there seems to be no 
   convenient source in the literature. This is used as a basis for 
   treating the affine case, where finer classifications of reflection
   subgroups are given, and combinatorial aspects of root systems are shown to appear.
   Various parameter sets for certain types of subsets of roots are interpreted in terms of
   alcove geometry and the Tits cone, and combinatorial identities are derived.
 \end{abstract}
\maketitle

\section{Introduction}\label{intro}  %\mar{intro} 
\subsection{}\label{ss1.1} Let $V$ be a Euclidean space, that is, 
a real vector space with a positive definite symmetric bilinear form
which we write $\mpair{-,-}\colon V\times V\rightarrow \real$. 
Let $\Phi$ be a finite reduced root system in $V$, in the sense of \cite{Bour}.
The following basic facts may be found in \cite{Bour} or \cite{St}.
For $\alpha\in\Phi$, we have the corresponding coroot $\ck\alpha=\frac{2\alpha}{\mpair{\alpha,\alpha}}$.
The set $\ck\Phi:=\{\ck\alpha\mid \alpha\in\Phi\}$ is again a reduced root
system in $V$, called the root system dual to $\Phi$. 

For $\alpha\in\Phi$ the corresponding reflection $s_\alpha:V\to V$ is defined 
by 
\be
s_\alpha(v)=v-\mpair{\ck\alpha,v}\alpha\;\;\;\;(v\in V).
\ee
The Weyl group $W=W(\Phi)$ of $\Phi$ is the group generated by the reflections
$s_\alpha$, $\alpha\in\Phi$, and it is well known that any reflection
in $W$ is of the form $s_\alpha$ for some $\alpha\in\Phi$.
In this work we shall assume that $W$ (or $\Phi$) is crystallographic;
this means that $\mpair{\alpha,\ck \beta}\in \Int$ for all $\alpha,\beta\in \Phi$.

A subset $\Phi_+\subseteq\Phi$ is a {\em positive system} if it consists of the
 roots of $\Phi$ which are positive relative to some vector space total ordering of $V$. Such an
ordering is defined by its cone $V_+\subseteq V$ of `positive elements',
which is closed under positive linear combinations, and satisfies $V=V_+\cup-V_+\cup\{0\}$
(disjoint union).
A subset $\Lambda\subseteq\Phi$ is a {\em simple system} if $\Lambda$ is linearly
independent and every root of $\Phi$ is a linear combination of $\Lambda$ in 
which all nonzero coefficients have the same sign, i.e. are either all positive or all
negative. It is well known that each 
positive system contains a unique simple system $\Lambda\subseteq\Phi_+$, and 
conversely. Moreover the positive systems (and hence the simple systems) 
of $\Phi$ are in bijection
with $W$, which acts on them regularly; that is, given two simple systems
$\Lambda_1,\Lambda_2$ of $\Phi$, there is a unique element $w\in W$ such
that $\Lambda_2=w\Lambda_1$.

\subsection{} A {\em root subsystem} of $\Phi$ is a subset $\Psi\subseteq\Phi$ which is itself a root
system. This is equivalent to the requirement that for $\alpha,\beta\in\Psi$,
$s_\alpha(\beta)\in\Psi$.

{\em Positive subsystems} and {\em simple subsystems} of $\Phi$ are respectively
defined as the positive systems and simple systems of a root subsystem of $\Phi$.

A {\em reflection subgroup} $W'$ of $W$ is a subgroup which is generated by a subset of
the reflections $s_\alpha$ ($\alpha\in\Phi$). Any such subgroup has an associated set
$\Psi=\Psi(W')$ of roots of $\Phi$, where $\Psi=\{\alpha\in\Phi\mid s_\alpha\in W'\}$.
Then $\Psi$ is clearly a root subsystem of $\Phi$, and the correspondence
$W'\leftrightarrow\Psi(W')$ is a bijection between reflection subgroups of $W$
and root subsystems of $\Phi$. In this work, we shall discuss the classification  
of root subsystems under various types of equivalence (isomorphism, $W$-conjugacy
or equality), for both the finite cases and the associated affine root systems. 
Our treatment therefore includes a 
classification of reflection subgroups under the corresponding equivalence.

\subsection{} A root subsystem $\Psi\subseteq\Phi$ is {\em closed} if, whenever $\alpha,\beta\in\Psi$
and $\alpha+\beta\in\Phi$ then $\alpha+\beta\in\Psi$.

The closed subsystems of a finite root system $\Phi$ were classified by Borel 
and de Siebenthal
in \cite{BS}, using the connection between crystallographic Weyl groups
and Lie algebras. The classification of all subsystems may be deduced from this.
This was also done in \cite{D}, and a treatment may be found in \cite{K};
a general classification for all unitary (complex) reflection groups
may be found in \cite{LT}. In \cite{Ca}, the classification of
reflection subgroups of finite Weyl groups $W$ is used in the
determination of the conjugacy classes of $W$, and the reference given for this result
is to \cite{BS}. It therefore seems reasonable to begin with a clear, concise
and self contained treatment of the finite case. We do this in \S\ref{s-finite}.

Our results in the case of the affine
Weyl groups have some slight overlap with those of \cite{F-T}, although
the methods are different. 
Background on affine Weyl groups is collected in \S \ref{s:aff}. 
We   give two bijective  parameterisations of the   reflection subgroups of affine Weyl groups. 
Their parameterisation  in \S \ref{s1} (by  specifying their ``canonical simple system'') 
generalizes the classification of reflection subgroups of the infinite dihedral group by 
specifying their set of  generating reflections of minimal total length. Their parameterisation 
in \S \ref{s5} (by explicitly describing their root system) generalizes the classification of 
non-trivial  reflection subgroups of the infinite dihedral group by specifying their reflections  by  a
coset  in $\Int$ of a subgroup of $\Int$. One or the other   parameterisation  may be 
more advantageous for a particular application (e.g computing the index of a reflection 
subgroup, or  a conjugate of it); the relation between the parameterisations is considered 
in detail in \S \ref{comparison}.

\section{The case of a finite root system}\label{s-finite}%\mar{s-finite}
 
We begin with the following well known
characterisation of the simple subsystems of $\Phi$.

 \begin{lemma}\label{lem:finsimple} %\mar{lem:finsimple}

 Let $\Phi$ be a finite root system in the Euclidean space $V$.
 A subset $\Gamma\subseteq\Phi$ is a simple subsystem of $\Phi$ 
 (i.e. a simple system in a root subsystem of $\Phi$) if and only
 if 
 \begin{conds}
 \item $\Gamma$ is linearly independent.
 \item For any pair $\alpha,\beta\in\Gamma$, $\mpair{\alpha,\ck\beta}\leq 0$.
 \end{conds}
 \end{lemma}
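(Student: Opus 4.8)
The forward implication is immediate from the definitions, and I would dispatch it first. Suppose $\Gamma$ is a simple system of a root subsystem $\Psi\subseteq\Phi$. Then (i) holds, since a simple system is by definition linearly independent. For (ii), take distinct $\alpha,\beta\in\Gamma$. Because $\Psi$ is a root subsystem and $\beta\in\Gamma\subseteq\Psi$, the vector $s_\beta(\alpha)=\alpha-\mpair{\alpha,\ck\beta}\beta$ lies in $\Psi$, and I would expand it in the simple system $\Gamma$: its $\alpha$-coordinate is $1>0$ and its $\beta$-coordinate is $-\mpair{\alpha,\ck\beta}$, all others being $0$. Since every root of $\Psi$ is a combination of $\Gamma$ whose nonzero coefficients all share one sign, the positive $\alpha$-coordinate forces all coordinates to be $\geq 0$; in particular $-\mpair{\alpha,\ck\beta}\geq 0$, which is (ii). No root-string facts are needed here.

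For the converse, assume (i) and (ii); the plan is to exhibit a root subsystem having $\Gamma$ as a simple system. Set $W'=\langle s_\alpha\mid\alpha\in\Gamma\rangle$ and $\Psi:=W'\Gamma$. First I would verify $\Psi$ is a root subsystem: for $w\gamma,w'\gamma'\in\Psi$ one has $s_{w\gamma}=ws_\gamma w\inv\in W'$, hence $s_{w\gamma}(w'\gamma')\in W'\Gamma=\Psi$, which is exactly the closure condition. Next, each $s_\alpha$ ($\alpha\in\Gamma$) preserves $U:=\operatorname{span}_\real\Gamma$, so $W'$ does too; thus $\Psi=W'\Gamma\subseteq U$ and $\operatorname{span}_\real\Psi=U$, and by (i) the set $\Gamma$ is a basis of $U$ (so $|\Gamma|=\dim U$). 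By the definition of a simple system, it then remains only to prove the sign dichotomy: every element of $\Psi$ is a non-negative, or a non-positive, integer combination of $\Gamma$ (integrality coming for free from the crystallographic hypothesis, which keeps $\mathbb Z\Gamma$ stable under each $s_\alpha$). Granting this, $\Gamma$ is a simple system of $\Psi$ and the lemma follows.

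The heart of the matter, and the step I expect to be the main obstacle, is this dichotomy, where both hypotheses enter essentially. My plan is to fix the functional $t=\sum_{\alpha\in\Gamma}\tilde\alpha$, the sum of the $\Gamma$-dual basis vectors in $U$, so that $\mpair{t,\alpha}=1$ for all $\alpha\in\Gamma$; to set $\Psi^+=\{\psi\in\Psi\mid\mpair{t,\psi}>0\}$; and to show $\Psi^+\subseteq\mathbb Z_{\ge 0}\Gamma$ by induction on the height $\mpair{t,\psi}$. The key input is the classical positivity property of obtuse bases: by (ii) the Gram matrix of $\Gamma$ is positive definite with non-positive off-diagonal entries (a Stieltjes matrix), so its inverse has non-negative entries; equivalently, each $\tilde\alpha$ lies in the cone $\operatorname{cone}(\Gamma)$ of non-negative combinations of $\Gamma$. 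This guarantees that any $\psi\in\Psi^+\setminus\Gamma$ admits some $\alpha\in\Gamma$ with $\mpair{\psi,\ck\alpha}>0$, along which the height can be strictly decreased, feeding the induction.

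The delicate point is ensuring the reflected root $s_\alpha\psi$ remains positive, so that the inductive hypothesis applies to it; this is exactly the assertion that each generator $s_\alpha$ permutes the positive roots other than $\alpha$, i.e.\ that $(W',\{s_\alpha\mid\alpha\in\Gamma\})$ is a Coxeter system with $\Gamma$ as its simple roots. I anticipate this requiring the most care, and I would establish it either by the standard geometric-representation (Tits) argument, or — more in the spirit of this paper — by the characterisation of the canonical simple system of a reflection subgroup, condition (ii) being precisely the requirement that each pair $s_\alpha,s_\beta$ furnish the canonical generators of the rank-two subsystem they generate. Either route produces the dichotomy and completes the proof; the Stieltjes positivity fact is what links the purely numerical hypothesis (ii) to the geometric conclusion that $\Gamma$ is a simple system.
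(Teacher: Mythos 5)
Your proposal is correct and follows essentially the same route as the paper: the forward direction is the standard expansion of $s_\beta(\alpha)$ in the basis $\Gamma$ (the paper simply cites Steinberg for this), and for the converse both arguments pass to $W'=\langle s_\gamma\mid\gamma\in\Gamma\rangle$ and $\Psi=W'\Gamma$ and then reduce everything to the sign dichotomy for $\Psi$ with respect to $\Gamma$, which both ultimately outsource to the same standard Coxeter-theoretic input --- the paper by noting the pairwise angles are crystallographic-obtuse and citing \cite{DyRef}, Lemma (4.2), you by invoking the Tits representation or the canonical-simple-system characterisation of reflection subgroups. Your extra Stieltjes-matrix and height-induction scaffolding is sound but largely re-derives what that cited lemma already gives, and the step you rightly flag as delicate (that the reflections preserve positivity, which cannot be deduced from the dichotomy without circularity) is precisely the content being outsourced in both treatments.
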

 
\begin{proof} If $\Gamma$ is a simple subsystem of $\Psi\subseteq\Phi$, then
by definition $\Gamma$ is linearly independent, and the property (ii)
is standard (cf. \cite[Appendix, Lemma (10)]{St}).

Conversely, let $\Gamma\subseteq\Phi$ satisfy the conditions (i) and (ii)
and write $W'$ for the group generated by the reflections $s_\gamma$ ($\gamma\in\Gamma$).
Let $\Psi$ be the set of roots $W'\Gamma\subseteq\Phi$. Then $\Psi$ is a 
crystallographic root system with Weyl group $W'$,
for if $\alpha\in\Psi$, $\alpha=w\gamma$
for some $w\in W'$ and $\gamma\in\Gamma$; hence 
$s_\alpha=s_{w\gamma}=ws_\gamma w\inv\in W'$, and so
$\Psi$ is precisely the set of roots $\alpha\in\Phi$ such
that $s_\alpha\in W'$.
Replacing $W'$ with $W$ and $\Psi$ with $\Phi$, we may therefore suppose
that the reflections $s_\gamma$ ($\gamma\in\Gamma$) generate $W$,
and that $\Phi=W\Gamma$.

Let $\Phi_+$ be the roots of $\Phi$ which are positive linear
combinations of $\Gamma$. 
If $\dot \alpha$ denotes the unit vector in the direction of $\alpha$,
the assumptions imply that if $\gamma_1,\gamma_2\in\Gamma$,
then $\mpair{\dot \gamma_1,\dot \gamma_2}\in\{-\cos\frac{\pi}{2},
-\cos\frac{\pi}{3},-\cos\frac{\pi}{4},-\cos\frac{\pi}{6}\}$. 
It now follows from \cite[Lemma (4.2)]{DyRef} that $\Phi=\Phi_+
\amalg-\Phi_+$.
\end{proof}

We shall also require the following result.

\begin{lemma}\label{lem:negchamber} %\mar{lem:negchamber}
 Let $\Phi$ be an indecomposable finite crystallographic
 root system in the Euclidean space $V$, and let
 $\Gamma\subseteq\Phi$ be a simple subsystem of $\Phi$. 
Let $\CC$ be the corresponding (closed) fundamental chamber
for the action of $W=W(\Phi)$ on $V$. Then $\alpha\in\CC\cap\Phi$ if and
only if $\alpha$ is the highest root $\omega$ of $\Phi$ or 
$\alpha=\ck{\omega^*}$,
where $\omega^*$ is the highest root of $\ck\Phi$
\end{lemma}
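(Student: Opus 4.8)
The plan is to identify $\CC\cap\Phi$ with the set of \emph{dominant} roots: writing $\CC=\{v\in V\mid \langle\ck\gamma,v\rangle\geq 0\ \text{for all}\ \gamma\in\Gamma\}$ (here $\Gamma$ is a simple system, i.e.\ base, of all of $\Phi$), a root $\alpha$ lies in $\CC$ exactly when $\langle\ck\gamma,\alpha\rangle\geq 0$ for every $\gamma\in\Gamma$, equivalently $\langle\gamma,\alpha\rangle\geq 0$, since these differ by the positive scalar $2/\langle\gamma,\gamma\rangle$. I would then show this set is precisely $\{\omega,\ck{\omega^*}\}$. First I would record two standard facts about an indecomposable crystallographic $\Phi$: it has at most two root lengths, and its highest root $\omega$ (for the order defined by $\Gamma$) is a long root satisfying $\omega-\alpha\in\sum_{\gamma\in\Gamma}\Z_{\geq 0}\gamma$ for \emph{every} root $\alpha$. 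A one-line maximality argument shows $\omega$ is dominant: if $\langle\gamma,\omega\rangle<0$ for some $\gamma\in\Gamma$, then $\omega+\gamma$ would be a root strictly higher than $\omega$, a contradiction.

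The heart of the argument is the uniqueness claim: \emph{any dominant root $\alpha$ with $\langle\alpha,\alpha\rangle=\langle\omega,\omega\rangle$ equals $\omega$}. To prove it I would write $\omega-\alpha=\sum_{\gamma\in\Gamma}c_\gamma\gamma$ with all $c_\gamma\geq 0$ (possible since $\omega$ is highest) and expand
\[
0=\langle\omega,\omega\rangle-\langle\alpha,\alpha\rangle=\langle\omega-\alpha,\omega\rangle+\langle\omega-\alpha,\alpha\rangle .
\]
Here $\langle\omega-\alpha,\omega\rangle=\sum_\gamma c_\gamma\langle\gamma,\omega\rangle\geq 0$ by dominance of $\omega$, and likewise $\langle\omega-\alpha,\alpha\rangle=\sum_\gamma c_\gamma\langle\gamma,\alpha\rangle\geq 0$ by dominance of $\alpha$. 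Two non-negative numbers summing to $0$ both vanish, so $\langle\omega-\alpha,\omega\rangle=\langle\omega-\alpha,\alpha\rangle=0$, whence $\langle\omega-\alpha,\omega-\alpha\rangle=0$ and $\alpha=\omega$. In particular $\omega$ is the \emph{only} dominant root of maximal (long) length.

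To treat short roots I would pass to the dual system $\ck\Phi$, which has base $\ck\Gamma$ and highest root $\omega^*$. A direct check shows that $\alpha\mapsto\ck\alpha$ sends dominant roots of $\Phi$ to dominant roots of $\ck\Phi$ (as $\langle\ck{(\ck\gamma)},\ck\alpha\rangle=\langle\gamma,\ck\alpha\rangle$ is a positive multiple of $\langle\gamma,\alpha\rangle$), and that it interchanges long and short since $\langle\ck\alpha,\ck\alpha\rangle=4/\langle\alpha,\alpha\rangle$. Applying the previous paragraph inside $\ck\Phi$ shows $\omega^*$ is dominant and is the unique dominant long root of $\ck\Phi$; transporting back, $\ck{\omega^*}=\ck{(\ck{\beta})}=\beta\in\Phi$ is a dominant short root, and any dominant short root $\alpha$ of $\Phi$ has $\ck\alpha=\omega^*$, i.e.\ $\alpha=\ck{\omega^*}$. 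Since every root is long or short, $\CC\cap\Phi=\{\omega\}\cup\{\ck{\omega^*}\}$, the two coinciding when $\Phi$ is simply laced.

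The step I expect to be the main obstacle is the uniqueness claim, and in particular spotting the identity $\langle\omega-\alpha,\omega+\alpha\rangle=\langle\omega,\omega\rangle-\langle\alpha,\alpha\rangle$, which is exactly what converts the equal-length hypothesis together with the dominance inequalities into the rigid conclusion $\alpha=\omega$. The remaining ingredients---membership $\omega\in\CC$, the at-most-two-lengths fact, and the behaviour of dominance and length under $\alpha\mapsto\ck\alpha$---are routine, though one must take care that dualizing reverses the length order, so that the long-root statement for $\ck\Phi$ yields the short-root statement for $\Phi$.
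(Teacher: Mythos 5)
Your proof is correct, but it follows a genuinely different route from the paper's. The paper's argument is orbit-theoretic: it invokes the facts that an indecomposable $\Phi$ has at most two root lengths, that $W$ acts transitively on the roots of each length, and that the closed chamber $\CC$ meets each $W$-orbit exactly once; since $\omega$ and $\ck{\omega^*}$ visibly lie in $\CC$ and represent the long and short orbits respectively, they exhaust $\CC\cap\Phi$. You instead prove directly that $\omega$ is the \emph{unique} dominant root of its length, via the identity $\mpair{\omega-\alpha,\omega+\alpha}=\mpair{\omega,\omega}-\mpair{\alpha,\alpha}$ together with the two dominance inequalities forcing $\mpair{\omega-\alpha,\omega-\alpha}=0$, and then handle short roots by passing to $\ck\Phi$ (checking that $\alpha\mapsto\ck\alpha$ preserves dominance and reverses the length order). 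What your approach buys is independence from the transitivity of $W$ on roots of a fixed length and from the fundamental-domain property of $\CC$ — both nontrivial inputs — at the cost of a slightly longer computation; both arguments still rely on the standard facts that the highest root is long, that there are at most two lengths, and that $\ck\Gamma$ is a base of $\ck\Phi$. The one point worth making explicit in your write-up is that a short root of $\Phi$ dualizes to a root of \emph{maximal} length in $\ck\Phi$ (so that the uniqueness statement in $\ck\Phi$ applies to $\ck\alpha$); this holds because $\alpha\mapsto\ck\alpha$ is a bijection $\Phi\to\ck\Phi$ inverting lengths, and you do gesture at it, so there is no real gap.
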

\begin{proof}
It follows from arguments based on the fact that 
for any two roots $\alpha_1,\alpha_2\in\Phi$, we have
$\mpair{\alpha_1,\alpha_2}\mpair{\alpha_2,\alpha_1}=
4\cos^2\phi,$ where $\phi$ is the angle between $\alpha_1$
and $\alpha_2$, that there are at most two possibilities
for the length of a root in $\Phi$ (see \cite[Ch. VI, \S 1.3]{Bour}).

It is also standard that $W$ acts transitively on the set of roots
of a given length. Now it is clear that the roots $\omega,\ck{\omega^*}$ 
of the statement lie in $\CC$, and by the regularity of the action of $W$
on the set of chambers, they are the only roots in their $W$ orbit
in $\CC$. But the highest root is always long, whence the $W$-orbit
of $\omega$ and $\ck{\omega^*}$ include all roots of $\Phi$. This proves the
Lemma.
\end{proof}

Note that in the above lemma, $\omega$ and $\ck{\omega^*}$
have the same length (and hence coincide) if and only
if $\Phi$ has just one root length. In that case, 
%\mar{convention for later} 
we declare all roots of $\Phi$ to be both short and long.

The root subsystem $\Psi\subseteq\Phi$ is said to be {\em parabolic}
if $\Phi$ has a simple system $\Pi$ such that there is a subset 
$\Gamma\subseteq\Pi$ which is a simple system for $\Psi$. The 
parabolic subgroups (i.e. the Weyl groups of parabolic subsystems)
are known to be precisely the stabilisers in $W$ of points
of $V$. In general $W$ will have reflection subgroups other than 
parabolic ones; the next definition provides a convenient way
of describing them.

\begin{definition}\label{def:elemext}
%\mar{def:elemext}
Let $\Psi\subsetneq\Phi$ be a root subsystem. We say that 
$\Phi$ is an {\em elementary extension} of $\Psi$ if 
there is a simple system $\Pi$ and 
an indecomposable component $\Sigma$ of $\Phi$
such that a simple system for $\Psi$ 
%\mar{inserted ``of $\Sigma$''} 
is obtained by deleting one element of $\Sigma$ from $\Pi\cup\{-\theta\}$,
where $\theta$ is a root of $\Sigma$ which lies in the chamber 
$\CC$ defined by the positive system corresponding to $\Pi$.
\end{definition}
By Lemma \ref{lem:negchamber}, $\theta$ is the highest root of 
$\Sigma$ or $\ck\theta$ is the highest root of $\ck\Sigma$.
We shall prove

\begin{theorem}\label{thm:elemext}
%\mar{thm:elemext}
Let $\Phi$ be a finite crystallographic root system in the Euclidean
space $V$, and let $\Psi\subsetneq\Phi$ be a root subsystem.
Then there is a root subsystem $\wh\Psi$ with 
$\Psi\subsetneq\wh\Psi\subseteq\Phi$, such that either 
$\Psi$ is parabolic in $\wh\Psi$ or $\wh\Psi$ is an elementary
extension of $\Psi$.
\end{theorem}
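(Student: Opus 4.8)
The plan is to construct $\wh\Psi$ by adjoining to a simple system of $\Psi$ a single, suitably chosen root. Fix a simple system $\Gamma$ for $\Psi$ and set $U=\operatorname{span}_{\real}\Gamma=\operatorname{span}_{\real}\Psi$, so that $W(\Psi)$ acts on $V=U\oplus U^{\perp}$ and fixes $U^{\perp}$ pointwise. As $\Psi\subsetneq\Phi$, choose $\beta\in\Phi\setminus\Psi$, write $\beta=\beta_U+\beta_\perp$ with $\beta_U\in U$, and pick $w\in W(\Psi)$ carrying $\beta_U$ into the antidominant cone $\{u\in U:\mpair{u,\ck\gamma}\le 0\text{ for all }\gamma\in\Gamma\}$, which is a fundamental domain for $W(\Psi)$ acting on $U$. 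Put $\delta:=w\beta$. Since $w$ preserves $\Phi\setminus\Psi$ and fixes $U^\perp$, and since $\ck\gamma\in U$, the root $\delta$ lies in $\Phi\setminus\Psi$ and satisfies $\mpair{\delta,\ck\gamma}\le 0$ for every $\gamma\in\Gamma$. The argument now divides according to whether $\delta\in U$.

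Suppose first $\delta\notin U$. Then $\Gamma\cup\{\delta\}$ is linearly independent. For $\gamma\in\Gamma$ the quantities $\mpair{\delta,\ck\gamma}$ and $\mpair{\gamma,\ck\delta}$ are positive multiples of $\mpair{\delta,\gamma}$, so both are $\le 0$; together with the obtuseness of $\Gamma$ this shows $\Gamma\cup\{\delta\}$ satisfies conditions (i) and (ii) of Lemma~\ref{lem:finsimple}, hence is a simple subsystem. Let $\wh\Psi$ be the subsystem it generates. Then $\Pi:=\Gamma\cup\{\delta\}$ is a simple system of $\wh\Psi$ containing the simple system $\Gamma$ of $\Psi$, and $\delta\in\wh\Psi\setminus\Psi$; hence $\Psi\subsetneq\wh\Psi$ and $\Psi$ is parabolic in $\wh\Psi$.

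Suppose instead $\delta\in U$. Then $\Gamma\cup\{\delta\}$ is obtuse but linearly dependent, so its Gram matrix is positive semidefinite of corank one; since $\Gamma$ is independent, exactly the connected component $C$ of $\Gamma\cup\{\delta\}$ containing $\delta$ is non-definite, and being a connected, obtuse, positive semidefinite but not positive definite diagram it is a completed (affine) Dynkin diagram. The finite reflection group generated by $C$ has an irreducible root subsystem $\Phi_C$ of rank $|C|-1$, whose simple system is obtained by deleting the affine node $-\theta$ of $C$, where $\theta$ is the highest root of $\Phi_C$. I would take $\wh\Psi$ to be the subsystem with simple system $(\Gamma\setminus\{-\theta\})\cup\{\delta\}$. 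One checks that $\Phi_C$ is an indecomposable component of $\wh\Psi$, that $-\theta\in\Phi_C$ so that $\Gamma\subseteq\wh\Psi$ and hence $\Psi\subseteq\wh\Psi$, and that deleting the node $\delta$ of $\Phi_C$ from $(\Gamma\setminus\{-\theta\})\cup\{\delta\}\cup\{-\theta\}=\Gamma\cup\{\delta\}$ returns $\Gamma$; this realises $\wh\Psi$ as an elementary extension of $\Psi$ in the sense of Definition~\ref{def:elemext}. Note that no reduction to the indecomposable case for $\Phi$ is needed, since $\Phi_C$ already supplies the required component.

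The step I expect to be the main obstacle is the verification, in this last case, that $\delta$ is a \emph{non-affine} node of $C$, so that the affine node $-\theta$ may be chosen with $-\theta\ne\delta$ and $\delta$ genuinely lies in a simple system of $\Phi_C$. This is exactly where the hypothesis $\delta\notin\Psi$ enters: were $\delta$ the affine node, then $C\setminus\{\delta\}=\Gamma\cap C$ would be a simple system of $\Phi_C$, forcing $\delta\in\langle\Gamma\cap C\rangle\subseteq\Psi$, a contradiction; this simultaneously excludes $C$ of type $\tilde A_n$, in which every node is affine. The other delicate point is matching $\theta$ to Definition~\ref{def:elemext} via Lemma~\ref{lem:negchamber}: since $-\theta$ is obtuse to the simple roots $\sigma$ of $\Phi_C$, one has $\mpair{\theta,\ck\sigma}\ge 0$, so $\theta$ lies in the chamber $\CC$ of $\Phi_C$ and is accordingly the highest root of $\Phi_C$ or else $\ck\theta$ is the highest root of $\ck{\Phi_C}$, which is what accommodates the two possible root lengths.
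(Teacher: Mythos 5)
Your argument is correct and follows essentially the same route as the paper's proof: replace the chosen root by a $W(\Psi)$-antidominant one, adjoin it to $\Gamma$, and split according to whether the enlarged set is linearly independent, invoking in the dependent case the classification of affine diagrams (for which the paper also offers the classification-free substitute Corollary~\ref{cor:indecnip}(b)). Your explicit verification that the affine node can be chosen different from $\delta$ --- because $\delta\notin\Psi$ rules out $\delta$ as a valid affine node while at least one valid affine node always exists --- makes precise a point the paper's proof leaves implicit in the word ``evidently''.
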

\begin{proof}
Let $\Gamma$ be a simple system for $\Psi$, with corresponding 
positive system $\Psi_+\subseteq\Psi$. Denote the Weyl group
of $\Psi$ by $W'$ and let 
$\CC=\{v\in V\mid \mpair{\gamma,v}\geq 0\text{ for }\gamma\in\Gamma\}$
be the 
fundamental chamber for the action of $W'$ corresponding to 
the simple system $\Gamma$.

Since $\Psi\neq\Phi$, there is a root $\alpha\in\Phi\setminus\Psi$. 
There is a $W'$-transform $w\alpha\in -\CC$ ($w\in W'$), and since clearly
$w\alpha\not\in\Psi$, we may assume that $\alpha\in-\CC$. It follows that
$\mpair{\alpha,\gamma}\leq 0$ for all $\gamma\in\Gamma$.

Let $\wt\Gamma=\Gamma\cup\{\alpha\}$. If $\wt\Gamma$ is linearly
independent, then by Lemma \ref{lem:finsimple}, $\wt\Gamma$ is
a simple subsystem of $\Phi$, and taking $\wh\Psi$ to be the 
root system corresponding to $\wt\Gamma$, clearly $\Psi$ is
a parabolic subsystem of $\wh\Psi$.

Suppose then that $\wt\Gamma$ is linearly dependent. As with any set of roots,
$\wt\Gamma$ has a unique partition 
%\mar{eq:partgamma}
\be\label{eq:partgamma}
\wt\Gamma=\Gamma_1\cup\Gamma_2\cup\dots\cup\Gamma_k,
\ee
where the $\Gamma_i$ are mutually orthogonal (i.e. if $\gamma_i\in\Gamma_i$
and $i\neq j$ then $\mpair{\gamma_i,\gamma_j}=0$), and cannot be further
decomposed as in (\ref{eq:partgamma}).

Evidently $\alpha$ lies in precisely one of the sets $\Gamma_i$, and we may
suppose $\alpha\in\Gamma_1$. Now the matrix of pairwise
inner products of the elements of $\Gamma_1$ satisfy the conditions
of \cite[Ch. V, \S 3, Lemma 5]{Bour}, hence there are positive real
numbers $c_\gamma$ such that $\sum_{\gamma\in\Gamma_1}c_\gamma\gamma=0$,
and the $c_\gamma$ are uniquely determined, up to a common multiple,
and for all $\gamma$, $c_\gamma\neq 0$. It follows from Lemma \ref{lem:finsimple}
that every proper subset of $\Gamma_1$ is a simple subsystem of $\Phi$.

Now the matrix $(\mpair{\gamma,\ck\delta})_{\gamma,\delta\in\Gamma_1}$
is an indecomposable generalised Cartan matrix of affine type in the sense
of \cite[\S 15.3]{Ca:2005}. These have been completely classified 
(see, e.g. {\it op.~cit. \S\S 15.3, 17.1}), and the following statement
is an easy consequence of this classification (for an alternative proof using affine 
Weyl groups, and independent of the classification,  see Corollary \ref{cor:indecnip}(b)).

There is an element $\theta\in\Gamma_1$ such that 
if $\Gamma_1'=\Gamma_1\setminus\{\theta\}$, then 
\begin{conds}
\item $\Gamma_1'$ is a simple system for an indecomposable root system 
$\Psi_1'$, and
\item $\theta\in\Psi_1'$.
\end{conds}

By Lemma \ref{lem:negchamber},
$\theta$ is the negative of either the highest root of $\Psi_1$
or of the coroot of the highest root of $\ck\Psi_1$.

Let $\wh\Psi=\Psi_1'\cup\Psi_2\cup\dots\cup\Psi_k$, where $\Psi_1'$ is 
as above, and for $i=2,\dots,d$,
$\Psi_i$ is the root subsystem with simple system $\Gamma_i$.
Then $\wh \Psi$ is evidently an elementary extension
of $\Psi$ with the desired properties.
\end{proof}

\begin{corollary}\label{cor:weylsubsyst}
% \mar{cor:weylsubsyst}
Let $\Phi$ be a finite crystallographic root system in the Euclidean
space $V$. The Dynkin diagram of any root subsystem of $\Phi$ is obtained
by a sequence of operations of the following type on  Dynkin diagrams, 
beginning with the Dynkin diagram of $\Phi$:
\begin{enumerate}
\item Delete one or more nodes.
\item Replace some indecomposable component $\Sigma$ of the Dynkin diagram by
the completed Dynkin diagram of $\Sigma$ or $\ck\Sigma$, and delete at 
least one node from the resulting affine component. \end{enumerate}
\end{corollary}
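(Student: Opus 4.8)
The plan is to iterate Theorem \ref{thm:elemext} and read off the effect of each iteration on Dynkin diagrams. The theorem says that any proper root subsystem $\Psi\subsetneq\Phi$ sits inside a slightly larger subsystem $\wh\Psi$ (with $\Psi\subsetneq\wh\Psi\subseteq\Phi$) such that $\Psi$ is either parabolic in $\wh\Psi$ or an elementary extension of $\Psi$ is $\wh\Psi$. Reading this backwards: starting from $\Phi$ itself, any subsystem can be reached by a finite descending chain
\be
\Phi=\Psi_0\supsetneq\Psi_1\supsetneq\dots\supsetneq\Psi_m=\Psi,
\ee
where at each step $\Psi_{j+1}$ is obtained from $\Psi_j$ either by passing to a parabolic subsystem or by the reverse of an elementary extension. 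I would first justify that such a chain terminates: since $|\Psi_{j+1}|<|\Psi_j|$ and all sets are finite subsets of $\Phi$, strict descent guarantees finiteness. The only thing to check for termination is that the process can always be \emph{started}, i.e.\ that every proper subsystem really is captured by the theorem; but that is exactly the content of Theorem \ref{thm:elemext}.

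Next I would translate the two types of one-step descent into the two diagram operations. For the parabolic step, $\Psi_{j+1}$ has a simple system that is a \emph{subset} of a simple system of $\Psi_j$, so at the level of Dynkin diagrams $\Psi_{j+1}$ is obtained from $\Psi_j$ by deleting the nodes not in that subset; this is operation (1). For the elementary-extension step, Definition \ref{def:elemext} tells us that a simple system for $\Psi_{j+1}$ is obtained by deleting one node of some indecomposable component $\Sigma$ of $\Psi_j$ from $\Pi\cup\{-\theta\}$, where by Lemma \ref{lem:negchamber} the vertex $-\theta$ is precisely the affine node that extends the Dynkin diagram of $\Sigma$ (if $\theta$ is the highest root of $\Sigma$) or of $\ck\Sigma$ (if $\ck\theta$ is the highest root of $\ck\Sigma$). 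Passing from $\Sigma$ to $\Sigma\cup\{-\theta\}$ is exactly the completed-Dynkin-diagram construction, and then deleting a node gives operation (2). Here one should note that the added node together with $\Sigma$ forms the completed diagram of $\Sigma$ or $\ck\Sigma$ as asserted, which is just the standard description of the affine diagram; the integrality constraints in \ref{ss1.1} ensure the added vertex has the correct bond strengths.

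**The main obstacle** will be bookkeeping rather than any deep difficulty: the operations in the statement act on the \emph{whole} diagram of the current subsystem, and I must be careful that applying operation (2) to one indecomposable component $\Sigma$ while leaving the other components $\Psi_2,\dots,\Psi_k$ untouched is consistent with the componentwise description $\wh\Psi=\Psi_1'\cup\Psi_2\cup\dots\cup\Psi_k$ in the proof of the theorem. I would therefore verify that operation (2), as phrased, indeed operates on a single component and produces exactly the simple system of the elementary extension, matching the decomposition above. A secondary point is that the two operations as listed build diagrams in the direction $\Phi\to\Psi$ (shrinking), whereas the theorem naturally produces the enlargement $\Psi\to\wh\Psi$; I would make explicit that deleting nodes (operation (1)) inverts the parabolic inclusion and that completing-then-deleting (operation (2)) inverts an elementary extension, so that the forward sequence of operations realises the descending chain.

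Finally, I would assemble these observations into an induction on $|\Phi|-|\Psi|$ (or on the length $m$ of the chain), with base case $\Psi=\Phi$ handled by the empty sequence of operations, completing the proof.
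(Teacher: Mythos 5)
Your proposal is correct and follows essentially the same route as the paper's own (much terser) proof: iterate Theorem \ref{thm:elemext} to produce a chain of subsystems, read the parabolic steps as node deletions, and read the elementary-extension steps, via Lemma \ref{lem:negchamber} and the fact that a root system and its dual have the same unoriented diagram, as the completed-diagram-plus-deletion operation. The extra care you take over termination and over the componentwise decomposition $\wh\Psi=\Psi_1'\cup\Psi_2\cup\dots\cup\Psi_k$ is sound but not a departure from the paper's argument.
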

\begin{proof}
If $\Psi$ is a parabolic subsystem of $\Phi$, its Dynkin diagram is obtained
from that of $\Phi$ by deleting nodes.
By Theorem 1, we need only relate the Dynkin diagram of a subsystem $\Psi$
of $\Phi$ to that of $\Phi$ when $\Phi$ is an elementary extension of $\Psi$.
In that case, taking into account Lemma \ref{lem:negchamber},
the result follows from the nature of an elementary extension,
given that a root system and its dual have the same (unoriented) Dynkin diagram.
\end{proof}

It is well-known that any parabolic subsystem of $\Phi$ is closed in $\Phi$.
Also, it follows from the definitions and the results of Borel and de Siebenthal \cite{K} 
that if $\Phi$ is an elementary extension of $\Psi$, then either $\Psi$ is closed in $\Phi$ 
or $\ck \Psi $ is closed in $\ck \Phi$. Hence we have the following: 
\begin{corollary}\label{cor1} 
%\mar{cor1}
 If $\Psi$ is a maximal proper root subsystem of $\Phi$, 
then either $\Psi$ is closed in $\Phi$, or $\ck \Psi$ is closed in $\ck \Phi$. \end{corollary}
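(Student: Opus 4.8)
The plan is to reduce everything to the dichotomy already packaged in Theorem \ref{thm:elemext}, and then to read off the conclusion from the two closure facts recorded in the paragraph immediately preceding Corollary \ref{cor1}. The point is that almost all of the real work has been done: once the structural theorem is in hand, maximality of $\Psi$ does the rest.

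First I would apply Theorem \ref{thm:elemext} to the proper subsystem $\Psi\subsetneq\Phi$. This produces a root subsystem $\wh\Psi$ with $\Psi\subsetneq\wh\Psi\subseteq\Phi$ such that either $\Psi$ is parabolic in $\wh\Psi$, or else $\wh\Psi$ is an elementary extension of $\Psi$. The crucial observation is that the hypothesis of maximality forces $\wh\Psi=\Phi$: since $\wh\Psi$ is a root subsystem of $\Phi$ lying strictly between $\Psi$ and $\Phi$, and $\Psi$ is by assumption a maximal proper root subsystem, there is no room for a strictly intermediate subsystem, so necessarily $\wh\Psi=\Phi$. Thus the two alternatives become precisely: either $\Psi$ is parabolic in $\Phi$, or $\Phi$ is an elementary extension of $\Psi$.

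In the first case I would invoke the well-known fact (recalled just above the corollary) that any parabolic subsystem of $\Phi$ is closed in $\Phi$; this gives directly the first alternative of the statement, that $\Psi$ is closed in $\Phi$. In the second case, where $\Phi$ is an elementary extension of $\Psi$, I would invoke the fact, also recorded above and following from Borel and de Siebenthal \cite{K}, that such an extension has either $\Psi$ closed in $\Phi$ or $\ck\Psi$ closed in $\ck\Phi$. In either case the asserted dichotomy holds, completing the argument.

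I do not expect a genuine obstacle here, since the substantive content is entirely contained in Theorem \ref{thm:elemext} and the preceding remark. The only step requiring care is the collapse $\wh\Psi=\Phi$, and that is immediate from the definition of a maximal proper subsystem. The single conceptual point worth double-checking in the elementary-extension case is that the description of an elementary extension by deletion of a node from the completed diagram of $\Sigma$ (respectively $\ck\Sigma$) really does correspond, up to passage to the dual root system, to the Borel--de Siebenthal description of maximal closed subsystems as node-deletions from the completed Dynkin diagram; but this correspondence is exactly what the cited closure fact records, so no new verification is needed beyond citing it.
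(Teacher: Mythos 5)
Your argument is correct and is exactly the one the paper intends: the paper gives no separate proof of Corollary \ref{cor1}, deducing it directly ("Hence we have the following") from Theorem \ref{thm:elemext} together with the two closure facts stated immediately beforehand, with maximality collapsing $\wh\Psi$ to $\Phi$ just as you observe. No differences to report.
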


\section{Affine Weyl groups}\label{s:aff}
%\mar{s:aff}

\subsection{Background}
We introduce some notation and  present a summary of the necessary background on affine Weyl  groups, 
most of which may be found in \cite[Ch. VI, \S 2]{Bour} or \cite[Ch 6--7]{Kac}.

Let $Z$ be any additive subgroup of $\real$ (typically $Z=\Int$ or $Z=\real$ in applications).  
Let $\Psi$ be any root system in $V$.  We define $Q_{Z}(\Psi)$ to be the additive subgroup of $V$ 
generated by elements
  $z\ck \alpha$ for $z\in Z$ and $\alpha\in \Psi$. We define $P_{Z}(\Psi)$  as the additive subgroup
\[P_{Z}({\Psi}):=\mset{v\in \real \Psi\mid\mpair{v,\ck\alpha}\in Z \text{ for all $\alpha\in \Psi$}}\]
of $V$. Let $\Gamma$ be any simple system of $\Psi$. Define the corresponding  fundamental coweights 
$\omega_{\gamma}=\omega_{\gamma}(\Gamma)\in \real \Psi$ of $\Psi$,  
for $\gamma\in \Gamma$, by $\mpair{\gamma,\omega_{\gamma'}}=\delta_{\gamma,\gamma'}$. 
Then $Q_{Z}(\Psi)=\oplus_{\alpha\in \Gamma}Z\ck \alpha\cong Z^{\vert \Gamma\vert}$ 
and $P_{Z}(\Psi)=\oplus_{\alpha\in \Gamma}Z\omega_{\alpha}\cong Z^{\vert \Gamma\vert}$

The cases $P(\Psi):=P_{\Int}(\Psi)$ and  $Q(\Psi):=Q_{\Int}(\Psi)$ are of special significance; 
these are the the coweight lattice and the coroot lattice of $\Psi$
(under the  natural identifications of the dual space of  $\real\Psi$ with $\real \Psi$
induced by $\mpair{-,-}$). We shall sometimes write $P(\Gamma)$ for $P(\Psi)$ and $Q(\Gamma)$ for $Q(\Psi)$.

       The index $[P(\Gamma):Q(\Gamma)]$ is called the index of connection
       (\cite[Ch VI, \S 1,no. 9]{Bour}) of $\Psi$
       (or $\Gamma$) and will be denoted as $f_{\Psi}$ or $f_{\Gamma}$.
       It is known (\cite[Ch VI, \S 1, Ex. 7]{Bour}) that 
      % \mar{eq:cartdet} 
       \be \label{eq:cartdet} f_{\Gamma}=\det ((\mpair{\alpha,\ck \beta})_{\alpha,\beta\in \Gamma})\ee  
       is equal to the determinant of the Cartan matrix of $\Psi$.

\subsection{} For the remainder of \S\ref{s:aff}, we take $\Phi$ to be a reduced finite 
crystallographic root system in $V$, as in 
\S \ref{intro}.  For simplicity, assume $\Phi$ spans $V$. Write $P=P(\Phi)$ and $Q=Q(\Phi)$.
 For $v\in V$, denote by $\tau_v:V\to V$ the translation
$x\mapsto x+v$. For any subgroup $B$ of $V$, let $\tau_{B}:=\mset{\tau_{\alpha}\mid\alpha\in B}$ 
denote the group of translations of $V$ by elements of $B$. 

 For $\alpha\in\Phi$ and $m\in\Z$,
define the affine reflection
%  \mar{eq:affineref}
\be\label{eq:affineref}
s_{\alpha,m}(v)=v-(\mpair{\alpha,v}-m)\ck\alpha=s_{\alpha}(v)+m\ck\alpha.\ee
We let $H_{\alpha,m}:=\mset{v\in V\mid s_{\alpha,m}(v)=v}=\mset{v\in V\mid \mpair{v,\alpha}=m}$ 
denote the reflecting hyperplane of $s_{\alpha,m}$.

\begin{definition}\label{def:affinew}
%\mar{def:affinew}
The {\it affine Weyl group} $W^a$ associated to $\Phi$ is the group generated by
$\mset{s_{\alpha,m}\mid \alpha\in\Phi,\;\;m\in\Z}$. The {\it extended affine Weyl group} $\wt {W}^{a}$
is the group generated by  $W\cup \tau_{P}$. The alcoves of $W^{a}$ are the connected components
(for the standard topology of $V$)  of $V\setminus \cup_{\alpha\in \Phi,n\in \Int}H_{\alpha,n}$.
A {\it special point} of $W^{a}$ is a point of $V$ which for each $\alpha\in \Phi$, 
lies in some reflecting hyperplane $H_{\alpha,m}$ ($m\in \Int$).
\end{definition}

\begin{proposition}\label{prop:affinewequiv} 
%\mar{prop:affinewequiv}
\begin{num}
\item The group $W^a$ is the semidirect product $W\ltimes \tau_{Q}$ of $W$ 
and $\tau_{Q}$, while $\wt{W}^{a}$
is the semidirect product $W\ltimes \tau_{P}$ of $W$ and $\tau_{P}$.
Moreover $W^{a}$ is a normal subgroup of 
$\wt W^{a}$  (i.e. $ W^{a}\trianglelefteq \wt{W}^{a}$)
such that $ \wt W^{a}/ {W}^{a}\simeq P/Q$.
\item Let $\Pi$ be a simple system of $\Phi$. Then
$W^a$ is a Coxeter group, with Coxeter generators 
$S^{a}:=\set{s_\alpha\mid\alpha\in\Pi}\cup\set{s_{\theta,1}\mid \theta\in H}$, 
where $H$ is the set
of highest roots of the indecomposable components of $\Phi$ (with respect to their simple systems
contained in $\Pi$).
\item Let $A:=\mset{v\in V\mid \mpair{v,\alpha}\geq  0\text{ \rm for all  $\alpha\in \Pi$},\quad
\mpair{v,\theta}\leq 1\text{ \rm for all  $\theta\in H$}}$. Then $A$ is the closure of the 
fundamental alcove $A^{0}$ for $(W^{a},S^{a})$ on $V$. In particular, $A$ is the 
closure of  $A^{0}$, $A^{0}$ is the interior of $A$, and $S^{a}$ is the set of reflections 
in the  walls $\set{H_{\alpha,0}\mid \alpha\in \Pi}\cup\set{H_{\theta,1}\mid \theta\in H}$ of  $A$. 
\item Each closed alcove contains at least one special point of $W^{a}$. The set of all special points 
coincides with the coweight lattice of $\Phi$.
\item The indecomposable components of $W^{a}$ are the affine Weyl groups associated 
to the indecomposable components of $\Phi$, and the group
 $\wt W^{a}$ is the direct product of the extended affine Weyl groups 
 of the indecomposable components of $\Phi$.
\end{num}
\end{proposition}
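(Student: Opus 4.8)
The plan is to establish the five assertions in turn, deducing each from elementary features of the translation/reflection decomposition and invoking the general theory of affine reflection groups (\cite[Ch.~V, \S3 and Ch.~VI, \S2]{Bour}, \cite[Ch.~6--7]{Kac}) where needed.

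\textbf{Part (a).} First I would record the identity $s_{\alpha,m}=\tau_{m\ck\alpha}s_\alpha$, which displays every generator of $W^a$ as an element of $W$ times a translation by $m\ck\alpha\in Q$; conversely $s_{\alpha,1}s_{\alpha,0}=\tau_{\ck\alpha}$, and since $Q=\sum_\alpha\Int\ck\alpha$ and each $s_\alpha=s_{\alpha,0}\in W^a$, this gives $W^a=W\tau_Q$. Because $w\tau_v w\inv=\tau_{wv}$ and $W$ stabilises both $Q$ and $P$, while a nontrivial translation is not linear (so $W\cap\tau_P=\{1\}$), both $W^a=W\ltimes\tau_Q$ and $\wt W^a=W\ltimes\tau_P$ follow. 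For normality, conjugation by $\tau_v$ ($v\in P$) sends $s_\alpha$ to $s_{\alpha,\mpair{\alpha,v}}\in W^a$ (as $\mpair{\alpha,v}\in\Int$ by definition of $P$) and maps $\tau_Q$ onto itself since translations commute; as $W\subseteq W^a$ as well, $W^a\trianglelefteq\wt W^a$. Finally $w\tau_v\mapsto v+Q$ is a homomorphism $\wt W^a\to P/Q$, well defined because $wv\equiv v\pmod Q$ for all $w\in W$ (indeed $s_\alpha v-v=-\mpair{\alpha,v}\ck\alpha\in Q$), with kernel $W\tau_Q=W^a$, giving $\wt W^a/W^a\cong P/Q$.

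\textbf{Parts (b) and (c).} These I would prove together. The family $\{H_{\alpha,m}\}$ is a locally finite, $W^a$-stable hyperplane arrangement whose orthogonal reflections generate $W^a$; the general theory of such groups then yields that $W^a$ is a Coxeter group for which the closure of any alcove is a fundamental domain and the reflections in the walls of one fixed alcove form a set of Coxeter generators. It remains to identify the fundamental alcove. The crux is the inequality that for any positive root $\beta$ in the component of $\Phi$ with highest root $\theta$ and any $v\in A$, one has $0\le\mpair{v,\beta}\le\mpair{v,\theta}\le1$: this holds since $\theta-\beta$ is a nonnegative integral combination of simple roots, so $\mpair{v,\theta-\beta}\ge0$ on $A$. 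Consequently no hyperplane $H_{\beta,n}$ meets the interior of $A$, which (being nonempty) is therefore a single alcove $A^0$ whose walls are exactly the $H_{\alpha,0}$ ($\alpha\in\Pi$) and $H_{\theta,1}$ ($\theta\in H$); the associated wall-reflections constitute $S^a$, proving both (b) and (c).

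\textbf{Part (d).} A point $v$ is special precisely when $\mpair{v,\alpha}\in\Int$ for every $\alpha\in\Phi$, and since each root is an integral combination of a simple system this is exactly the defining condition for the coweight lattice $P$, so both inclusions are immediate. For the first assertion, note $0\in A$ and $0$ is special, while $W^a=W\ltimes\tau_Q$ carries special points to special points (as $W$ permutes $\Phi$ and $Q\subseteq P$); transitivity of $W^a$ on alcoves then shows every closed alcove $wA$ contains the special point $w\cdot 0$. \textbf{Part (e).} Writing $\Phi=\amalg_i\Phi_i$ as an orthogonal sum of indecomposable components gives compatible splittings $V=\oplus_i\real\Phi_i$, $W=\prod_i W_i$, $Q=\oplus_i Q_i$ and $P=\oplus_i P_i$; substituting into (a) yields $W^a=\prod_i(W_i\ltimes\tau_{Q_i})=\prod_i W^a_i$ and $\wt W^a=\prod_i\wt W^a_i$, the products being direct since distinct factors act trivially on one another's subspaces. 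As generators of $S^a$ from different components involve mutually orthogonal roots (the affine node $s_{\theta_i,1}$ lying in the $i$-th block), the Coxeter graph of $(W^a,S^a)$ is the disjoint union of those of the $(W^a_i,S^a_i)$, identifying the indecomposable components of $W^a$ with the $W^a_i$.

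I expect the main obstacle to be the clean verification in (b) and (c) that $A$ is a single alcove --- that the listed hyperplanes are precisely its walls and that no further hyperplane cuts its interior --- together with carrying the ``one highest root per indecomposable component'' bookkeeping uniformly through all five parts.
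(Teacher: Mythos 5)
Your argument is correct, but it is worth pointing out that the paper does not actually prove this proposition: the statement is followed only by a remark referring the reader to \cite[Ch V, \S 1--3]{Bour} (and to \cite{Kac}), with the comment that the decomposable case follows readily from the indecomposable one. What you have written is essentially a reconstruction of that standard development: the identity $s_{\alpha,m}=\tau_{m\ck\alpha}s_{\alpha}$ yielding $W^a=W\ltimes\tau_Q$ and $\wt W^a/W^a\cong P/Q$; the identification of $A$ as a closed alcove via the inequality $0\le\mpair{v,\beta}\le\mpair{v,\theta}\le 1$ for a positive root $\beta$ lying under the highest root $\theta$ of its component; and the orbit argument $w\cdot 0\in wA$ for special points. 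All of these steps are sound, and the appeal to the general theory of locally finite reflection arrangements for the Coxeter presentation and the fundamental-domain property is exactly the appeal the paper itself makes. Two small points you would need to tighten in a fully self-contained account: first, to conclude that $S^a$ is precisely the set of reflections in the walls of $A$ you must also check that none of the inequalities defining $A$ is redundant, i.e.\ that each $H_{\alpha,0}$ ($\alpha\in\Pi$) and $H_{\theta,1}$ ($\theta\in H$) supports a facet --- this is where the simplex structure of each component of $A$ enters; second, your description of the special points uses $\mpair{v,\alpha}\in\Int$ for all $\alpha\in\Phi$, which matches the paper's identification $P=\oplus_{\alpha\in\Pi}\Int\omega_{\alpha}$ with $\omega_{\alpha}$ dual to the simple roots rather than the literal formula $\mpair{v,\ck\alpha}\in\Int$ appearing in the definition of $P_{\Int}$; the paper itself glosses over this, so your reading is the intended one. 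Neither point is a genuine gap.
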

\begin{remark} See \cite[Ch V, \S 1--3]{Bour} for more details on alcoves, walls, special points etc. For
 indecomposable $\Phi$, the above facts  can be found in \cite{Bour}. For decomposable $\Phi$, 
 the statements are either in \cite{Bour} or follow readily  from the results there about
decomposable affine Weyl groups. It is well known that any closed alcove is a fundamental 
domain for the action of $W^{a}$ on $V$. In this paper, by a fundamental domain  for a 
group acting on a set, we shall simply mean a set of orbit representatives, though our 
fundamental domains will usually satisfy additional properties such as connectedness.\end{remark}

\subsection{Linear realisation}\label{ss:linreal}
%\mar{ss:linreal}
The groups $W^a$ and $\wt W^a$ appear above as groups 
of affine transformations of $V$. They may also by viewed as groups of linear transformations
of a space $\wh V$ which contains $V$ as a subspace of codimension $1$.
In particular we shall see that there is an affine  root system in $\wh V$ on which both
$\wt W^{a}$ and $W^{a}$ act naturally.

Let $\wh V=V\oplus\real \delta$, where $\delta$ is an indeterminate.
The inner product $\mpair{-,-}$ on $V$ has a unique extension to a 
symmetric bilinear form on $\wh V$
which is positive semidefinite, and has radical
equal to the subspace $\real \delta$. This extension to $\wh V$ 
will also be denoted by $\mpair{-,-}$. For any non-isotropic element
$v\in \wh V$, let $\wh s_{v}\colon \wh V\rightarrow \wh V$ denote the $\real$-linear  
map $x\mapsto x-\mpair{x,\ck v}v$ where $\ck v=\frac{2}{\mpair{v,v}} v$.
For any $v\in V$, let $t_{v}\colon \wh V\rightarrow \wh V$ denote the $\real$-linear map 
$x\mapsto x+\mpair{x,v}\delta$.  

Let $\Pi$ be a simple system of  $\Phi(\subset V\subset\wh V)$ with corresponding set $\Phi_{+}$ 
of positive roots.   
\begin{definition}\label{def:phia} Define the following subsets of $\wh V$:
%\mar{def:phia}
\be\label{eq:phia}
\begin{aligned}
&\Phi^a:=\Phi+\Z \delta\\
&\wh \Pi:=\Pi\cup\set{-\theta+\delta\mid \theta\in H}, \text{ and}\\ 
&\Phi^a_{+}:=
\mset{\alpha+n\delta\mid \alpha\in \Phi,n\in \Int_{\geq 0}, n>0 \text{ \rm if $\alpha\in -\Phi_{+}$}}.\\
\end{aligned}
\ee
\end{definition}
\begin{proposition}\label{prop:afflinear}
%\mar{prop:afflinear}
\begin{num} 
\item The group $\wt W^{a}$ (resp., $W^{a}$) is isomorphic to the
group generated by $W$ and $t_{\gamma}$ for $\gamma\in P$ (resp., $\gamma\in Q$). 
Under this isomorphism, $t_{\gamma}$ is mapped to $\tau_{-\gamma}$
and for $\alpha\in \Phi$ and $m\in \Z$, $s_{\alpha,m}$ is mapped to
$\wh s_{\alpha-m\delta}=\wh s_{m\delta-\alpha}$.
\item We have:
 $\Phi^a$ is $\wt W^{a}$-stable, $W^{a}=\mpair{\wh s_{\alpha}\mid \alpha\in \Phi^a}$,
 $\Phi^a=W^{a}\wh\Pi=\Phi_{+}\dot\cup -\Phi_{+}$,  
 and $\Phi_{+}=\Phi\cap \real_{\geq 0}\wh \Pi$. 
 Moreover the set $S^{a}$ of $\text{\rm Proposition \ref{prop:affinewequiv}}$ is mapped to 
 $\mset{\wh s_{\alpha}\mid \alpha\in \wh \Pi}$.
\end{num}
\end{proposition}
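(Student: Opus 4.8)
The plan is to prove (a) by a direct computation of the linear action on $\wh V=V\oplus\real\delta$ and then to deduce (b) from (a) together with the Coxeter structure of $W^{a}$ recorded in Proposition \ref{prop:affinewequiv}. For (a), I would write a general element of $\wh V$ as $x=v+c\delta$ with $v\in V$, $c\in\real$. Since $\delta$ spans the radical of the form, $\mpair{x,\ck{(\alpha-m\delta)}}=\mpair{v,\ck\alpha}$, and a short computation gives
\[
\wh s_{\alpha-m\delta}(v+c\delta)=s_\alpha(v)+\bigl(c+m\mpair{v,\ck\alpha}\bigr)\delta,\qquad t_\gamma(v+c\delta)=v+\bigl(c+\mpair{v,\gamma}\bigr)\delta .
\]
From these one reads off the two identities on which everything rests: $\wh s_{\alpha-m\delta}=t_{-m\ck\alpha}\,\wh s_\alpha$, mirroring $s_{\alpha,m}=\tau_{m\ck\alpha}s_\alpha$, and $\wh w\,t_\gamma\,\wh w^{-1}=t_{w\gamma}$ (where $\wh w$ denotes the extension of $w\in W$ to $\wh V$ fixing $\delta$), mirroring $w\tau_\beta w^{-1}=\tau_{w\beta}$. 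Since $\gamma\mapsto t_\gamma$ is additive and injective on $V$, these are exactly the relations needed to invoke the universal property of the semidirect products of Proposition \ref{prop:affinewequiv}(a): the assignment $w\mapsto\wh w$, $\tau_\beta\mapsto t_{-\beta}$ extends to a homomorphism $\phi\colon\wt W^{a}\to GL(\wh V)$. One checks that $\phi$ is injective by reading off the $V$-component (which recovers $w$, hence the translation part), and that its image is $\mpair{W,t_\gamma\mid\gamma\in P}$; restricting to $W^{a}=W\ltimes\tau_{Q}$ gives the statement for $W^{a}$. Tracing definitions yields $\phi(\tau_{-\gamma})=t_\gamma$ and $\phi(s_{\alpha,m})=\wh s_{\alpha-m\delta}$, as required.

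Granting (a), several assertions of (b) are immediate. Every element of $\wt W^{a}$ preserves the form on $\wh V$ (the $t_\gamma$ and the $\wh w$ visibly do), so to see that $\Phi^{a}=\Phi+\Z\delta$ is $\wt W^{a}$-stable it suffices to check the generators: $\wh w$ fixes $\delta$ and permutes $\Phi$, while $t_\gamma(\alpha+n\delta)=\alpha+(n+\mpair{\alpha,\gamma})\delta$ lies in $\Phi^{a}$ provided $\mpair{\alpha,\gamma}\in\Z$ for $\gamma\in P$; this holds because $P=\oplus_{\gamma\in\Pi}\Z\omega_\gamma$ is spanned by the fundamental coweights dual to $\Pi$ and every root is a $\Z$-combination of $\Pi$. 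Next, $\wh s_{\alpha+n\delta}$ corresponds under $\phi$ to $s_{\alpha,-n}$, so $\mpair{\wh s_\alpha\mid\alpha\in\Phi^{a}}$ corresponds to $\mpair{s_{\alpha,m}\mid\alpha\in\Phi,\,m\in\Z}=W^{a}$; and since $s_\alpha\mapsto\wh s_\alpha$ for $\alpha\in\Pi$ while $s_{\theta,1}\mapsto\wh s_{\theta-\delta}=\wh s_{-\theta+\delta}$, the set $S^{a}$ corresponds to $\mset{\wh s_\alpha\mid\alpha\in\wh\Pi}$ (using $\wh s_v=\wh s_{-v}$). The decomposition $\Phi^{a}=\Phi^{a}_{+}\dot\cup-\Phi^{a}_{+}$ is then a set-theoretic consequence of $\Phi=\Phi_{+}\dot\cup-\Phi_{+}$ and the definition of $\Phi^{a}_{+}$, obtained by sorting $\alpha+n\delta$ by the sign of $n$ and splitting the $n=0$ case by $\Phi_\pm$.

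The substantive point is $\Phi^{a}=W^{a}\wh\Pi$. The inclusion $\supseteq$ follows from stability together with $\wh\Pi\subseteq\Phi^{a}$. For $\subseteq$, I would argue reflection-theoretically: for $g\in W^{a}$ one has $g\,\wh s_\beta\,g^{-1}=\wh s_{g\beta}$ (conjugation of a reflection, valid since $g$ preserves the form), and a reflection $\wh s_\beta$ determines $\beta$ up to sign because $\Phi$ is reduced and the form is nondegenerate on $V$. Any $\beta\in\Phi^{a}$ may be written $\beta=\alpha-m\delta$, so $\wh s_\beta$ corresponds to the reflection $s_{\alpha,m}\in W^{a}$; as the reflections of the Coxeter system $(W^{a},S^{a})$ are precisely the conjugates of the simple reflections, and $S^{a}$ corresponds to $\mset{\wh s_\alpha\mid\alpha\in\wh\Pi}$, we obtain $\wh s_\beta=g\,\wh s_{\alpha_j}\,g^{-1}=\wh s_{g\alpha_j}$ with $\alpha_j\in\wh\Pi$, whence $\beta=\pm g\alpha_j\in W^{a}\wh\Pi$ (the sign is absorbed since $\wh s_\alpha(\alpha)=-\alpha$ keeps $W^{a}\wh\Pi$ stable under negation).

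Finally, for $\Phi^{a}_{+}=\Phi^{a}\cap\real_{\geq 0}\wh\Pi$ the inclusion $\subseteq$ uses that $\delta=\theta+(-\theta+\delta)\in\real_{\geq 0}\wh\Pi$ and that $\alpha+\theta\in\real_{\geq 0}\Pi$ whenever $\theta$ is the highest root of the component of $\alpha$ (the highest root dominates every root), while $\supseteq$ follows by reading the $\delta$-coefficient of a nonnegative expansion. I expect the main obstacle to be the bookkeeping in (a)---keeping the signs in $\phi(\tau_{-\gamma})=t_\gamma$ and $s_{\alpha,m}=\tau_{m\ck\alpha}s_\alpha$ mutually consistent---together with the correct invocation of the Coxeter structure of Proposition \ref{prop:affinewequiv} in the proof of $\Phi^{a}=W^{a}\wh\Pi$.
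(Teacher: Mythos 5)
Your proof is correct, but it takes a genuinely different route from the paper's in both parts. For (a), the paper does not work directly in $\wh V$: it passes to the contragredient action on $\wh V^{*}$, embeds $V$ as the affine hyperplane $E=\{f\in\wh V^{*}\mid f(\delta)=1\}$ via $i(v)(x)=\mpair{v,x}+d(x)$, and checks the intertwining relations $t_{\gamma}i=i\tau_{-\gamma}$ and $\wh s_{\alpha}i=is_{\alpha}$, so that $i$ is an isomorphism of $\wt W^{a}$-modules. You instead compute the linear action on $\wh V$ itself and invoke the universal property of the semidirect product $W\ltimes\tau_{P}$; this is equivalent and arguably more self-contained, since it never leaves $\wh V$ (the sign bookkeeping you flag does come out right: $s_{\alpha,m}=\tau_{m\ck\alpha}s_{\alpha}\mapsto t_{-m\ck\alpha}\wh s_{\alpha}=\wh s_{\alpha-m\delta}$, consistent with $\tau_{\beta}\mapsto t_{-\beta}$). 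For (b), the paper proves only the $\wt W^{a}$-stability of $\Phi^{a}$ (exactly as you do, via \eqref{eq5} and $\mpair{\alpha,\gamma}\in\Z$ for $\gamma\in P$) and then reduces to the indecomposable case, where it identifies $\Phi^{a}$ with the real root system of an untwisted affine Kac--Moody algebra and cites \cite{Kac} for the remaining claims. You prove those claims directly, which is more elementary but means your argument for $\Phi^{a}=W^{a}\wh\Pi$ silently relies on one nontrivial external input: that every affine reflection $s_{\alpha,m}$ is $W^{a}$-conjugate to an element of $S^{a}$, equivalently that every hyperplane $H_{\alpha,m}$ is a wall of some alcove (this is \cite[Ch V, \S 3]{Bour} and is implicit in the setup of Proposition \ref{prop:affinewequiv}, but you should cite it rather than phrase it as ``the reflections of the Coxeter system are the conjugates of the simple reflections,'' which is circular unless one already knows the $s_{\alpha,m}$ exhaust those reflections). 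With that citation added, your direct treatment of (b) is a legitimate alternative to the paper's appeal to Kac, and trades one black box for a lighter one.
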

\begin{proof}  
Let $W^{a}_{1}$ (resp., $\wt W^{a}_{1}$)  denote the group of linear 
endomorphisms of $\wh V$  defined in (a). 

The proof will use the following elementary formulae concerning the action of these groups on $\wh V$,
which are listed here for future reference.

% \mar{eq1} 
\begin{equation} \label{eq1}\wh s_{\alpha}t_{\gamma}=t_{\wh s_{\alpha}(\gamma)}\wh s_{\alpha},\quad\text{for 
$\alpha\in \Phi^a$,  $\gamma\in \wh V$}
  \end{equation}

% \mar{eq2} 
\begin{equation} \wh s_{\alpha+n\delta}=t_{n\ck \alpha} \wh s_{\alpha},\quad 
\text{for $\alpha\in \Phi$,  $n\in \Int$} \label{eq2}  \end{equation}

%\mar{eq3}
\begin{equation} \wh s_{\alpha+n\delta}\wh s_{\alpha+m\delta}=t_{(n-m)\ck \alpha},\quad
\text{for  $ \alpha\in \Phi$,  $ n,m\in \Int$} \label{eq3}       \end{equation}

%\mar{eq4-  OK now}
\begin{equation}
\wh s_{\alpha+n\delta}(\beta+m\delta)=s_{\alpha}(\beta)+(m-n\mpair{\beta,\ck \alpha})\delta
\quad\text{for $\alpha,\beta\in \Phi$ and $n,m\in \Int$} \label{eq4}      \end{equation}

%\mar{eq5}
\begin{equation} \label{eq5}  wt_{\gamma}(\alpha+m\delta)=w(\alpha)+(m+\mpair{\alpha,\gamma})
\delta\quad \text{for $w\in W$, $\gamma\in V$,  $\alpha\in \Phi$, $m\in \Int$.} \end{equation}

Consider  the contragredient action of $\wt W_{1}^{a}$ and $W_{1}^{a}$ on the 
dual space \[\wh V^{*}:=\text{Hom}_{\real}(\wh V,\real),\]
defined by $wf=f\circ w^{-1}$ for $f\in \wh V^{*}$ and $w\in \wh W^{a}$. 
Define $d\in \wh V^{*}$ by $d(v+c\delta)=c$ for $v\in V$, $c\in \real$.
The affine  map $i\colon V\rightarrow \wh V^{*}$ defined by
$i(v)(x)=\mpair{v,x}+d(x)$ for $v\in V$ and $x\in \wh V$, is injective, and
has image equal to the
affine hyperplane $E=\mset{f\in \wh V^{*}\mid f(\delta)=1}$ of $\wh V^{*}$.
This is $\wt W_{1}^{a}$-stable
since $\wt W_{1}^{a}$ fixes $\delta$. Since $E$ spans $\wh V^*$ as real vector space, 
restriction to $E$ defines an isomorphism 
of $\wt W_{1}^{a}$ (and hence of $W_{1}^{a}$) with a group of affine transformations of $E$.
But from the definitions, one readily checks that 

%\mar{eq6}
\begin{equation}  t_{\gamma} i=i\tau_{-\gamma}\quad\text{ for $\gamma\in V$}\label{eq6}  
\end{equation}
and
%\mar{eq7}
\begin{equation} \wh s_{\alpha}i=is_{\alpha}\quad\text{for $\alpha\in \Phi$}\label{eq7}   
 \end{equation}
The first assertion of (a) follows by noting that by \eqref{eq6},\eqref{eq7}, $i:V\to E$ 
is an isomorphism 
of $\wt W^a$-modules.
We observe also that under the above map, the element
$s_{\alpha,m}=\tau_{m\ck \alpha}s_{\alpha}$
of $ W^{a}$ is mapped to 
$t_{-m \ck\alpha}\wh s_{\alpha}=\wh s_{m\delta-\alpha}$ in $\wh W^{a}_{1}$.
This completes the proof of (a).

It is not difficult to prove (b) using the above formulae and well known 
facts about root systems of Weyl groups. Instead, we simply observe that the proof of (b)
reduces easily to the special case in which $\Phi$ is indecomposable, in which case 
$\Phi^a$ may be identified with the system of real roots of 
an untwisted affine Kac-Moody Lie algebra \cite[Prop. 6.3(a)]{Kac}, and 
given that $\Phi^a$ is $\wh W^{a}$-stable, the claims of (b) are well known
(see \cite[Ch 6--7]{Kac}). To see that $\Phi^a$ is $\wh W^{a}$-stable, 
it is clear that $\Phi^a$ is stable under $W^{a}$, and to prove stability
under $\wh W^a$, we apply \eqref{eq5} 
with $\gamma$ in the coweight lattice and note that $\mpair{\alpha,\gamma}\in \Z$.\end{proof}

\subsection{}\label{genextaff}  Let $R$ be a fixed  subgroup of $P$ containing $Q$. It is well-known that $R$ is 
$W$-stable, so one may form the semidirect product $\wt W:=W\ltimes \tau_{R}$ which satisfies 
$W^{a}\subseteq \wt W\subseteq \wt W^{a}$ and $[\wt W:W^{a}]=[R:Q]$. Many of our results for 
$\wt W^{a}$ apply equally to any of the subgroups $\wt W$.

For $v\in V\subset \wh V$, we have 
$\wh s_v=s_{v,0}\oplus \id_{\real\delta}=s_{v}\oplus \id_{\real\delta}$.
Henceforth, we shall therefore write $s_{\alpha}$ 
instead of $\wh s_{\alpha}$ for any $\alpha\in \Phi$. We shall regard
$\wt W^{a}$, $W^{a}$ and $\wt W$ as groups of linear transformations of $\wh V$, or as 
groups of affine transformations of $V$, as convenience dictates.

Let $\Delta\subseteq\wh V\setminus \real \delta$.
The {\it (indecomposable) components} of $\Delta$ are the inclusion-minimal 
non-empty subsets $\Delta'$ of $\Delta$ which are orthogonal to their complement in $\Delta$. 
Say that $\Delta$ is {\it indecomposable} if it has only one component. 
If $\Delta$ is a simple subsystem or a root subsystem of $\Phi^a$, these 
coincide with the usual notions of indecomposable components and indecomposability.

\subsection{Linear versus affine} \label{ss:standrootsyst}
% \mar{ss:standrootsyst} 
This  subsection and the next 
two describe relationships between $\Phi^a$
and root systems of affine Weyl groups, and the realisation
of $W^a$ as a group of affine transformations in a smaller space.
It will turn out that comparison of the two realisations of affine Weyl groups
as linear groups and as affine linear groups can be effectively exploited.

Recall (\cite[Ch IV, \S 1.9]{Bour}) that for any  set $I$, an $I$-indexed Coxeter matrix
is any symmetric matrix of the form $M=(m_{i,j})_{i,i\in I}$, where 
$m_{i j}\in \Z_{>0}\amalg\{\infty\}$, and $m_{i j}=1$
if and only if $i =j$. Let $M$ be such a  Coxeter matrix.
The Coxeter system $(W_{M},S_{M})$ with Coxeter matrix $M$ has a 
standard faithful representation, the ``Tits representation'',
as a reflection group of isometries of a space $U$ with basis 
$e_{i}$ for $i\in I$. This representation is 
described in \cite[Ch V, \S 4]{Bour} and \cite[5.3--5.4]{Hum}. Denote by 
$B_{M}$ the matrix $B_{M}=(-\cos\frac{\pi}{m_{i,j}})_{i,j\in I}$.
The space $U$ has a symmetric $\real$-valued bilinear form $(-,-)$
satisfying $((e_{i},e_{j}))_{i,j\in I}=B_{M}$. The simple reflections $S_{M}$ are the 
orthogonal reflections  $r_{i}$ on $U$ whose
fixed hyperplanes are the $e_{i}^\perp$ ($i\in I$); $W_{M}$ 
is then the group of linear transformations of $U$ generated by $S_{M}$. There is an associated 
root system $\Phi_{M}$ for $(W_{M},S_{M})$ defined by  $\Phi_{M}=W_{M}\Pi_{M}$ where 
$\Pi_{M}=\set{e_{i}\mid i\in I}$.

\subsection{} Now take $\wh V=V\oplus \real\delta$ as above,
and note that with respect to the form $\mpair{-,-}$, the set of 
isotropic elements of $\wh V$ is $\real\delta$.
For any non-isotropic  $\alpha\in \wh V$, we set  
$\dot\alpha:= \frac{1}{\Vert \alpha\Vert}\alpha$  
where ${\Vert \alpha\Vert}:=\mpair{\alpha,\alpha}^{1/2}$. 
  Define the map $\iota\colon \wh V\setminus \real\delta 
  \rightarrow \wh V\setminus \real\delta$ by $\iota(\alpha)=\dot \alpha$.
For any subset $\Gamma$ of $\wh V\setminus \real\delta$, we set  
$\ck \Gamma:=\mset{\ck \alpha\mid \alpha\in \Gamma}$ and $\dot \Gamma:=\iota(\Gamma)=
\mset{\dot \alpha\mid \alpha\in \Gamma}$.

\begin{remark} If $M$ is the Coxeter matrix of $(W^{a},S^{a})$ and $W^a$ is  
indecomposable, then there is an identification $U=\wh V$ (as real vector 
spaces with symmetric bilinear forms) such that 
$\Phi_{M}=\dot{\wh \Psi}$, $ \Pi_{M}=\dot{\wh \Pi}$  
and $(W^{a},S^{a})=(W_{M},S_{M})$.  
This is not true if $W^a$ is decomposable, since $\wh \Pi$
is then linearly dependent, 
and $\wh V$ is obtained from $U$ by taking 
the quotient by an appropriate subspace of 
the radical of $U$. The advantage of the root system $\Phi^a$  
is that its  root subsystems may be treated in a similar way
to $\Phi^a$ itself, since even for  
indecomposable $\Phi$, root subsystems of $\Phi^a$ need not have 
linearly independent simple roots.\end{remark}

\subsection{}\label{ss:based root datum} 
%\mar{ss:based root datum}

It is evident from the proof of Proposition \ref{prop:afflinear}, 
that the indecomposable components of the root system $\Phi^a$ 
may be identified with the real root systems of untwisted affine Kac-Moody Lie 
algebras associated to the components of $\Phi$ \cite{Kac}.  
If $\Phi$ is not indecomposable, then $\wh \Pi$ is not linearly independent. 
However, $\wh \Pi$  is positively independent in the following sense:  
a subset  $\Sigma$ of a $\real$-vector space is said to be {\it positively independent} if
$\sum_{\gamma\in \Sigma}c_{\gamma}\gamma=0$ with all $c_{\gamma}\geq 0$ and 
almost all $c_{\gamma}=0$  implies that all $c_{\gamma}=0$.   
It follows  that     $(\wh V,\mpair{-,-},\dot{ \wh \Pi})$ is a based root datum
with  root system $\dot{ \Phi^a}$ in the sense of \cite{Semi} with  
positive roots $\dot{\Phi^a}_{+}=\iota(\Phi^a_{+})$ and associated 
Coxeter system $( W^{a}, S^{a})$.
This is enough to ensure that standard facts about root systems 
of Coxeter groups apply to $\Phi^a$; see \cite{Semi} for properties 
of based root systems  and \cite{DyRig}, \cite{Mo} for discussion of  
root systems closely related to those considered here.

   \subsection{}  The fact proved below will play an important role in this paper.
Let  $M=(m_{\alpha\beta})_{\alpha,\beta\in\Gamma}$ be Coxeter matrix where  $\Gamma$ is a finite set,  $B_M:=\left(-\cos\frac{\pi}{m_{\alpha\beta}}\right)_{\alpha,\beta\in\Gamma}$ be the matrix  of the invariant form on the space of the Tits representation of the corresponding Coxeter
group.

\begin{proposition}\label{prop:indec} 
%\mar{prop:indec}
 Suppose that $\Gamma\subseteq V\setminus\set{0}$ is indecomposable and
linearly dependent, and that the matrix $B:=(\mpair{\dot\alpha,\dot\beta})_{{\alpha,\beta\in \Gamma}}$ 
is equal to $B_{M}$ for some $\Gamma$-indexed Coxeter matrix $M$
(cf. \S \ref{ss:standrootsyst}).
Then there exists $\theta\in \Gamma$ and scalars $c_{\gamma}\in \real_{>0}$
for $\gamma\in \Gamma$  with the following property:  the set
$\Lambda:=\mset{c_{\gamma}\gamma\mid \gamma\in \Gamma\setminus \set{\theta}}$ is 
the set of simple roots of an indecomposable, reduced  crystallographic root system 
$\Psi$  and $-c_{\theta}\theta$ 
%\mar{changed alpha to theta} 
is the highest root of $\Psi$ with respect to $\Lambda$.
Moreover, the affine Weyl group of $\Psi$ is naturally
isomorphic to $W_{M}$ and the set $\set{c_{\gamma}}_{\gamma\in \Gamma}$ 
is uniquely determined up to multiplication by a positive scalar.  
\end{proposition}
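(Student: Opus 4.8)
The plan is to analyze the symmetric matrix $B=B_M=(\mpair{\dot\alpha,\dot\beta})_{\alpha,\beta\in\Gamma}$, using that it is at once a genuine Gram matrix and the matrix of a Tits form. As the Gram matrix of the unit vectors $\dot\gamma$ it is positive semidefinite, and since $\Gamma$ (hence $\dot\Gamma$) is linearly dependent it is singular. First I would record the dictionary between $\Ker B$ and the linear dependences among $\dot\Gamma$: a vector $(d_\gamma)_{\gamma\in\Gamma}$ lies in $\Ker B$ if and only if $\sum_\gamma d_\gamma\dot\gamma=0$. One implication is immediate; the converse holds because $\Vert\sum_\gamma d_\gamma\dot\gamma\Vert^2=d^{\mathrm t}Bd$, which vanishes precisely on $\Ker B$ as $B$ is positive semidefinite.

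The heart of the argument is a Perron--Frobenius analysis of $\Ker B$, exploiting that the off-diagonal entries $-\cos\frac{\pi}{m_{\alpha\beta}}$ are nonpositive. For $v\in\Ker B$ the entrywise absolute value satisfies $|v|^{\mathrm t}B|v|\le v^{\mathrm t}Bv=0$, so $|v|\in\Ker B$ as well; the identity $B|v|=0$ then reads $|v_\gamma|=\sum_{\beta\ne\gamma}(-B_{\gamma\beta})|v_\beta|$, and indecomposability of $\Gamma$ forces a nonnegative kernel vector to be either $0$ or strictly positive. Comparing two kernel vectors through $w-tv$ with $t=\min_\gamma w_\gamma/v_\gamma$ then shows $\Ker B$ is one-dimensional, spanned by a vector $n=(n_\gamma)$ with every $n_\gamma>0$. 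Thus $\sum_\gamma n_\gamma\dot\gamma=0$ is, up to scale, the unique dependence among the $\dot\gamma$; deleting any single $\theta$ leaves $\set{\dot\gamma\mid\gamma\ne\theta}$ linearly independent, so the associated principal submatrix of $B$ is positive definite. In sum, $M$ is of affine type: indecomposable, positive semidefinite, of corank one.

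I would then invoke the classification of indecomposable affine Coxeter systems by completed Dynkin diagrams (\cite[Ch.~VI, \S2]{Bour}): each such $M$ is the Coxeter matrix of the affine Weyl group $W^a(\Psi)$ of a finite, indecomposable, reduced crystallographic root system $\Psi$ (so in particular $m_{\alpha\beta}\in\set{2,3,4,6,\infty}$), with a distinguished affine node whose deletion yields the Dynkin diagram of $\Psi$ and whose direction is that of the negative highest root. Since the normalized simple roots of this standard realization have Gram matrix $B$ and span a space with one-dimensional radical exactly as the $\dot\gamma$ do, there is a linear isometry of the span of $\Gamma$ carrying $\dot\gamma$ to the normalized standard roots and our chosen $\theta$ to the affine node; pulling the standard roots back through it writes each as a positive multiple $c_\gamma\gamma$. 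Then $\Lambda=\set{c_\gamma\gamma\mid\gamma\in\Gamma\setminus\set{\theta}}$ is a simple system for a copy of $\Psi$, $-c_\theta\theta$ is its highest root (consistent with Lemma \ref{lem:negchamber}), and $W^a(\Psi)\cong W_M$ by matching simple reflections.

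For uniqueness, any admissible family rescales the $\dot\gamma$ to genuine roots, so the Cartan integers $\frac{2c_\gamma\mpair{\gamma,\beta}}{c_\beta\mpair{\beta,\beta}}$ are (nonpositive) integers; on each edge this determines the squared-length ratio up to the choice of which endpoint is long. The one-dimensional radical fixes the dependence $\sum_\gamma n_\gamma\dot\gamma=0$ intrinsically, and the requirement that $-c_\theta\theta$ be the \emph{highest} root of a \emph{reduced} $\Psi$ forces the affine node to carry the length of the highest root; together these rigidify all the ratios and pin down $\set{c_\gamma}$ up to a common positive factor. I expect this last step to be the main obstacle: the bare angle data $B_M$ by itself admits ``flipped'' (dual, possibly twisted) length assignments on the same directions, and it is precisely the combination of the reduced hypothesis with the highest-root condition that selects the untwisted normalization and yields uniqueness. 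The Perron--Frobenius step is routine; the genuine content lies in this rigidity and in invoking the affine classification to guarantee that a crystallographic rescaling exists at all.
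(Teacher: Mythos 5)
Your argument is essentially correct, but it diverges from the paper's at the decisive step, and the divergence matters for the paper's stated aims. Your opening analysis (positive semidefiniteness of the Gram matrix, the Perron--Frobenius argument showing $\Ker B$ is one-dimensional and spanned by a strictly positive vector, hence that $M$ is of affine type) is a direct proof of \cite[Ch V, \S 3, Lemme 5]{Bour}, which the paper simply cites; that part is fine. Where you then \emph{invoke the classification} of indecomposable affine Coxeter diagrams to produce the crystallographic realization, the paper instead runs a structural, classification-free argument: it realizes $W_M$ via the Tits representation on $U$, passes to the affine hyperplane $F=\set{f\in U^*\mid f(y)=1}$ on which $W_M$ acts as a discrete affine reflection group with compact alcove $A=C\cap F$, chooses a special point as origin, and applies \cite[Ch VI, \S 2, Prop.~8]{Bour} to conclude that $W_M$ \emph{is} the affine Weyl group of a reduced indecomposable crystallographic root system $\Psi$; the walls of $A$ then identify $\dot\Gamma$ with $\set{\dot\beta\mid\beta\in\Lambda\cup\set{-\omega_0}}$ and the $c_\gamma$ are read off. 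This distinction is the whole point of the proposition in the paper's architecture: the remark in the proof of Theorem \ref{thm:elemext} offers the classification as one route and advertises Corollary \ref{cor:indecnip} (hence this proposition) precisely as the classification-independent alternative, so your proof, though valid, forfeits that feature. Two smaller points. First, your ``linear isometry of the span of $\Gamma$'' should be set up between $\real\Gamma$ and the span of the \emph{projections} of the affine simple roots (equivalently the quotient by the radical $\real\delta$): the affine simple roots themselves span a degenerate space of dimension $\vert\Gamma\vert$, whereas $\real\Gamma$ is nondegenerate of dimension $\vert\Gamma\vert-1$, so the map you want is the one matching the two Gram-equal spanning sets of $(\vert\Gamma\vert-1)$-dimensional Euclidean spaces. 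Second, your uniqueness argument is only sketched (as you acknowledge); the rigidity does hold --- the intrinsic dependence $\sum_\gamma n_\gamma\dot\gamma=0$ forces $c_\gamma\Vert\gamma\Vert$ to be proportional to $n_\gamma$ divided by the mark of $\gamma$, and the highest-root condition excludes the dual/twisted length assignments --- but making this airtight either requires a short check across the affine types or is better obtained, as in the paper, from the uniqueness of the isometric identification of $F$ with $\real\Gamma$.
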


\begin{proof} 
Consider the Coxeter matrix 
$M=(m_{\alpha,\beta})_{\alpha,\beta\in \Gamma}$ and the 
associated reflection representation of $(W_{M},S_{M})$ in $U$, 
a $\real$-vector space with basis $e_{\alpha}$ for $\alpha\in \Gamma$, 
as described in \S\ref{ss:standrootsyst}. Set $\Delta:=\mset{e_{\alpha}\mid \alpha\in \Gamma}$. We have by 
construction that 
%\mar{innprodmat1}
\[ \label{innprodmat1} ((e_{\alpha},e_{\beta}))_{\alpha,\beta\in \Gamma}=B. \] 

If $\alpha\neq\beta$, then  $(e_{\alpha},e_{\beta})=-\cos\frac{\pi}{m_{\alpha\beta}}\leq 0$.
It follows from \cite[Ch V, \S 3, Lemme 5]{Bour}, the subspace 
$\real \Gamma$ has dimension $\vert \Gamma\vert -1$ and 
there is a linear relation
$\sum_{\alpha\in \Gamma}c_{\alpha}\alpha=0$ which is unique up to multiplication
by a non-zero scalar; moreover this relation may be uniquely chosen so that $c_{\alpha}>0$ 
and $\sum_{\alpha\in \Gamma}c_{\alpha}=1$.
Hence in particular our assumptions imply that the matrix $B$ is positive semidefinite of corank $1$.

   The linear map $L\colon U\rightarrow V$ defined by $e_{\alpha}\mapsto \dot \alpha$
for $\alpha\in \Gamma$   satisfies $\mpair{L(u),L(u')}=(u,u')$.
The form $(-,-)$ on $U$ is positive semidefinite and  clearly  $y:=\sum_{\alpha\in \Delta} c_{\alpha}e_{\alpha}\in U$
spans   its  radical. Following \cite[Ch V, \S 4.9]{Bour}, let $U^{*}$ denote the dual space of $U$ and 
 $F$ denote  the affine subspace $F=\mset{f\in U^{*}\mid f(y)=1}$. This is a coset of the subspace
 $T:=\mset{f\in U^{*}\mid f(y)=0}$ of $U^*$, so $T$ acts simply transitively by translation on $F$.
 Clearly $T$ is canonically isomorphic to $(U/ \real y)^*$. The form  $\mpair{-,-}$ on $U$ induces 
 a  natural positive definite inner product on the quotient  $U/\real y$, which may thus be naturally
 identified with $T:=(U/\real y)^{*}$, which therefore has a natural induced structure of 
 finite-dimensional real inner product space. Note that since $\mpair{y,e_\alpha}=0$ for all 
 $\alpha\in\Gamma$, $W_{M}$ fixes $y$, and hence acts on $F$ as a group of affine isometries. 
% \mar{proof has been modified}
 Let $C:=\mset{f\in U^{*}\mid  f(\Delta)\subseteq\real_{\geq 0}}$ denote the (closed)
 standard fundamental chamber 
 for $(W_{M},S_{M})$ acting on $U^{*}$; it is a closed simplicial cone, with walls
$\mset{f\in U^{*}\mid  f(e_{\alpha})=0}$ for $\alpha\in \Gamma$.
 The group $W$ acts faithfully by restriction as a group of affine transformations  of $F$, and we  
 shall therefore identify $W$ with a  group of affine transformations of $F$ (cf. \cite[Ch V, \S 4,  
 Prop. 10]{Bour}). Then $W$ is a faithful, effective, indecomposable group generated by  
 affine reflections. The intersection $A:=C\cap F$ is a closed  fundamental alcove for $W$ acting on 
 $F$. Note that $A$ is a closed  simplex in $F$. Its walls are the intersections of $F$ with the walls 
 of $C$ and hence are also indexed 
 naturally by $\Gamma$. Consider the  unit  normals (in $T$) to the
 walls of $C$, in the direction of $C$ from the wall.  The matrix of inner products of these 
 normals is $(e_{\alpha},e_{\beta})_{\alpha,\beta\in \Gamma}$ by \cite[Ch V, \S 4,  Prop. 10]{Bour}.

By \cite[Ch V, \S 3,  Prop. 12]{Bour}, we may chose a special point $O$ of $W_{M}$ on $F$ such 
that $O\in A$. Taking origin to be $0=O$, the affine space $F$ acquires the structure of a
real Euclidean  space, which is mapped isomorphically to $T$ by translation by $-O$.
Denote the inner product on $F$ as $(-\mid -)$.
It follows from \cite[Ch VI, \S 2, Prop. 8]{Bour}, that $W_{M}$ acting on $F$ is the 
affine Weyl group of a reduced crystallographic essential root system
$\Psi$ in $F$, as in Proposition \ref{prop:affinewequiv}, with Weyl group $W_{0}$. Since $W_{M}$ is   indecomposable, $\Psi$ 
is  indecomposable. Every closed alcove of $W_{M}$ on $F$ is contained in a closed chamber of $W_{0}$ on $F$. 
 Choose a set of simple roots $\Lambda$ for $\Psi$ such that the corresponding closed  fundamental chamber
$C'$ for  $W_{0}$ contains $A$. Then $A$ is the unique alcove of $W_{M}$ in $F$ which is contained  
in $C'$ and which contains $0$ in its closure.  Hence $A$ is the closed  fundamental alcove for 
$(W_{M},S_{M})$. Let $\omega_{0}$ denote the highest root of $\Psi$ with respect to $\Lambda$.
Then we have by Proposition \ref{prop:affinewequiv}  that 
\[A=\mset{v\in F\mid (v\mid\beta)\geq 0 \text{ \rm for all $\beta\in \Gamma$ and 
$(v\mid \omega_{0})\leq 1$} }.\]  It follows that the unit  normals to the walls 
of $ A$  are $\set{\dot \beta\mid \beta\in \Lambda'}$ where $\Lambda':= \Lambda\cup\set{-\omega_{0}}$.

This shows that the matrix $((\dot\alpha\mid\dot \beta))_{\alpha,\beta\in \Lambda'}$ 
coincides (up to permutations of its rows and columns)
with the matrix $(\mpair{\dot\alpha,\dot\beta})_{{\alpha,\beta\in \Gamma}}$.
Since $\Lambda'$ spans $F$ and $\Gamma$ spans $\real \Gamma$,
this implies  that there is a unique  identification of $F=\real \Gamma$ as inner product spaces 
such that $\mset{\dot \alpha\mid\alpha\in \Gamma}=\set{\dot \beta\mid \beta\in \Lambda'}$. 
Let $ \theta \in \Gamma$ with $\dot  \theta =\dot  \theta _{0}$. 
We may write $\Lambda'=\mset{c_{\alpha}\alpha\mid \alpha\in \Gamma}$ 
for uniquely determined $c_{\alpha}\in \real_{>0}$.  All 
assertions of the Proposition are now clear.  \end{proof}

\subsection{} The following Corollary provides a classification-free 
proof of the claim at the end of  the proof of Theorem \ref{thm:elemext}.
\begin{corollary} \label{cor:indecnip}
%\mar{cor:indecnip}
Let $\Phi$ be a crystallographic root system, and let
$\Gamma\subseteq \Phi$ be an indecomposable subset, which is linearly dependent over $\real$ 
and satisfies $\mpair{\alpha,\beta}\leq 0$ for all distinct  $\alpha,\beta\in \Gamma$.
Then
\begin{num}\item Let $B:=(\mpair{\dot\alpha,\dot\beta})_{{\alpha,\beta\in \Gamma}}$
and $M=(m_{\alpha,\beta})_{\alpha,\beta\in \Pi}$ where $m_{\alpha,\beta}$ denote 
the order of $s_{\alpha}s_{\beta}$ if $\alpha\neq- \beta$, and $m_{\alpha,\beta}:=\infty$ if $\alpha=-\beta$.
Then $M$ is the Coxeter matrix of an indecomposable affine Weyl group, and $B=B_{M}$.
\item  Let $W':=\mpair{s_{\alpha}\mid \alpha\in \Gamma}$.
There is an element $\theta\in \Gamma$ such that $\Gamma':=\Gamma\setminus\set{\theta}$ 
is a simple system for the root system 
$\Phi'\subseteq \Phi$ of $W'$, and $\theta\in \Phi'$. \end{num}
\end{corollary}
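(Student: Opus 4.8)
The plan is to derive both statements from Proposition \ref{prop:indec}, for which the one thing needing checking is that $B$ equals $B_M$ for the Coxeter matrix $M$ built from the orders of the products $s_\alpha s_\beta$.

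For (a) I would first note that $M$ really is a Coxeter matrix: it is symmetric, $m_{\alpha,\alpha}=1$ since $s_\alpha^2=\id$, and $m_{\alpha,\beta}\geq 2$ for $\alpha\neq\beta$ (with the convention $m_{\alpha,\beta}=\infty$ when $\alpha=-\beta$). The substance is the identity $B=B_M$. For distinct $\alpha,\beta\in\Gamma$ with $\alpha\neq-\beta$, the roots $\alpha,\beta$ are linearly independent and $s_\alpha s_\beta$ acts as a rotation in the plane they span; the standing crystallographic assumption forces $4\mpair{\dot\alpha,\dot\beta}^2=4\cos^2\phi\in\set{0,1,2,3}$, and together with $\mpair{\alpha,\beta}\leq 0$ this pins down $\mpair{\dot\alpha,\dot\beta}=-\cos(\pi/m)$, where $m=m_{\alpha,\beta}\in\set{2,3,4,6}$ is the order of $s_\alpha s_\beta$. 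When $\alpha=-\beta$ one has $\mpair{\dot\alpha,\dot\beta}=-1=-\cos(\pi/\infty)$, matching the convention. Hence $B=B_M$, and Proposition \ref{prop:indec} applies; it identifies $W_M$ with the affine Weyl group of the indecomposable crystallographic root system $\Psi$ it produces, so $M$ is the Coxeter matrix of an indecomposable affine Weyl group. This proves (a).

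For (b) I would feed $B=B_M$ back into Proposition \ref{prop:indec} to obtain $\theta\in\Gamma$ and scalars $c_\gamma\in\real_{>0}$ such that $\Lambda=\set{c_\gamma\gamma\mid\gamma\in\Gamma':=\Gamma\setminus\set{\theta}}$ is a simple system of an indecomposable reduced crystallographic root system $\Psi$ with highest root $-c_\theta\theta$. Since the $c_\gamma$ are nonzero, $\Gamma'$ is linearly independent, and condition (ii) of Lemma \ref{lem:finsimple} holds because $\mpair{\alpha,\ck\beta}$ has the same sign as $\mpair{\alpha,\beta}\leq 0$. By that Lemma, $\Gamma'$ is a simple system for the root subsystem $\Phi':=W''\Gamma'$ of $\Phi$, where $W'':=\mpair{s_\gamma\mid\gamma\in\Gamma'}$.

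It remains to show $\theta\in\Phi'$, which I see as the crux. Because $s_{c_\gamma\gamma}=s_\gamma$, the Weyl group of $\Psi$ is exactly $W''$; and since every root is conjugate to a simple root, $\Psi=W''\Lambda$ and $\Phi'=W''\Gamma'$ consist of roots lying in the same set of directions, namely those of the vectors $w\gamma$ for $w\in W''$ and $\gamma\in\Gamma'$, as $w(c_\gamma\gamma)$ and $w\gamma$ are positive multiples of one another. In particular the direction $-\dot\theta$ of the highest root $-c_\theta\theta$ of $\Psi$ occurs among them, so $\Phi'$ contains a root in direction $-\dot\theta$; as $\Phi$ is reduced and $\theta\in\Phi$, that root must be $-\theta$. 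Thus $\theta\in\Phi'$, whence $s_\theta\in W''$, so $W'=W''$ and $\Phi'$ is the root system of $W'$ with simple system $\Gamma'$ and $\theta\in\Phi'$. The main obstacle is precisely this last transfer: Proposition \ref{prop:indec} only yields the abstract system $\Psi$ with rescaled simple roots $c_\gamma\gamma$, and one must argue that the concrete subsystem $\Phi'\subseteq\Phi$, built from the unscaled $\Gamma'$, still contains $\theta$; the direction-matching argument combined with the reducedness of $\Phi$ is what resolves it.
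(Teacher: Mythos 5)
Your proof is correct and takes essentially the same route as the paper: both parts are deduced from Proposition \ref{prop:indec} after verifying $B=B_{M}$ via rank-two crystallographic considerations, and the transfer of $\theta$ into $\Phi'$ rests on $\Phi$ being reduced. The only (cosmetic) difference is in that last step: the paper deduces $s_{\theta}\in W''$ from $-c_{\theta}\theta\in\Psi$ and then identifies a root $\alpha\in\Phi'$ with $s_{\alpha}=s_{\theta}$, whereas you match the directions of the roots of $\Psi$ and $\Phi'$ directly.
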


\begin{proof}
%\mar{this proof has been streamlined; please check} 
  Clearly $M$ is a Coxeter matrix. Since $\Phi$ 
is crystallographic, rank two considerations \cite[Ch VI, \S 1.3]{Bour} 
imply that  $\mpair{\dot \alpha,\dot \beta}=-\cos\frac{\pi}{m_{\alpha,\beta}}$ for all $\alpha,\beta\in \Gamma$. Hence 
$B=B_{M}$, and thus we are in the situation of Proposition \ref{prop:indec}. It follows 
from that Proposition that $M$ is the Coxeter matrix of an indecomposable affine Weyl
group, whose root system we denote by $\Psi$. Let $\theta$ be as in the conclusion of
Proposition \ref{prop:indec}.

Since $\Gamma':=\Gamma\setminus\set{\theta}$ is linearly independent, it is a simple system 
of some root subsystem $\Phi'$ of $\Phi$, by  Lemma \ref{lem:finsimple}, and 
evidently $\Phi':=W'\Gamma'$ where $W':=\mpair{s_{\alpha}\mid \alpha\in \Gamma'}$.
Moreover since some multiple $c_\theta \theta\in\Phi'$, the reflection $s_\theta\in W'$
and hence $W'=\mpair{s_{\alpha}\mid \alpha\in \Gamma}$.
It follows also that there is some $\alpha\in \Phi'$ with $s_{\alpha}=s_{ \theta }$.
But $\alpha\in \Phi'$ and $s_{\alpha}=s_{ \theta }$ implies $\alpha=\pm  \theta $, 
whence $\theta \in \Phi'$. 
 \end{proof}

\section{Subsystems  of affine root systems I}\label{s1}
%\mar{s1}
In this section, we  classify the reflection subgroups of $W^{a}$ (a) up to equality
and (b) up to isomorphism. The classification (b) was conjectured (in equivalent form) and 
partly proved by Coxeter \cite{Cox}, and proved  in \cite{DyRef}. The proof here via the 
stronger result (a) is along similar lines, but is much   simpler and  independent of the 
classification of affine Weyl groups. The classification in (a) is obtained in conjunction
with a classification all simple subsystems of $\Phi^a$ (up to equality); the
reflection subgroups are in bijective correspondence with those simple subsystems 
which are contained  in a fixed positive subsystem of the ambient affine root system.

      \subsection{} \label{ss1.3} 
    %  \mar{ss1.3}

Let $\Phi$ be a crystallographic root system in $V$ and $\Phi^a$ the corresponding
affine root system (in $\wh V$, see Definition \ref{def:phia}). For 
non-isotropic $v\in \wh V$, we henceforth write $s_v$ for the orthogonal reflection 
in $v^\perp$, so that for $x\in\wh V$, $s_v(x)=x-\mpair{x,\frac{2v}{\mpair{v,v}}}v$.
Note that this transformation $s_v$ was denoted $\wh s_v$ in \S\ref{ss:linreal}. 
A root subsystem $\Sigma$ of $\Phi^a$ is a subset of  $\Phi^a$ with the property 
that $s_{\alpha}(\beta)\in \Sigma$ for all $\alpha,\beta\in \Sigma$. 
The root subsystems of $\Phi^a$ are in natural bijective  correspondence with 
the reflection subgroups of $W^{a}$ by the map which to a reflection subgroup  $W'$ 
associates the set of all roots $\alpha$ in $\Phi^a$ such that $s_{\alpha}\in W'$; 
the inverse map associates to a root subsystem $\Sigma$ the subgroup 
$\mpair{s_{\alpha}\mid \alpha\in \Sigma}$ (we use here simple consequences 
of properties of reflection subgroups of Coxeter systems from
\cite{DyRef}).

The notion of a simple subsystem of $\Phi^a$ is more delicate than 
the corresponding notion for $\Phi$
(see \cite{HowTay}, \cite{DyRig}).
Consider the following set of real numbers.
\[\COS:=\mset{-\cos\frac{\pi}{m}\mid m\in \Nat_{\geq 2}}\cup(-\infty, -1]\subseteq \real.\]
Let $\Sigma$ be a root subsystem of $\Phi^a$ with corresponding reflection subgroup $W'$.
A subset $\Gamma$  of $\Phi^a$ is called  a {\it simple system} for $\Sigma$   if 
$W'=\mpair{s_{\alpha}\mid \alpha\in \Gamma}$, 
$\Gamma$  is positively independent and $\mpair{\dot \alpha,\dot \beta}\in -\COS$ for 
all distinct $\alpha,\beta\in \Gamma$. Then $\Sigma=W'\Lambda$. 

\begin{remark}
In \cite{DyRig} the notion of {\it abstract simple system} is defined
by replacing positive independence of $\Gamma$ by the weaker condition that each 
component of $\Gamma$ be positively independent.
In this work we shall not make use of this concept.
\end{remark}

For any simple system 
$\Lambda$ of $\Sigma$, $S_{\Lambda}:=\mset{s_{\alpha}\mid \alpha\in \Lambda}$ is a 
set of Coxeter generators of $W'$.
If $\Sigma\subseteq \Phi$, its simple systems in the above sense coincide 
with those in the sense of \S \ref{intro}.

Note that any subset of $\Phi^a_{+}$ is positively independent.
It therefore follows from the general theory of reflection subgroups of Coxeter systems 
(\cite{DyRef},\cite{Semi}) that $\Sigma$ has a unique simple system $\Lambda$ 
contained in $\Phi_{+}$; we call this the canonical simple system of $\Sigma$ (or of the 
corresponding reflection subgroup).
We write $\Lambda=\Delta_{W'}$. Then $S_{\Lambda}$ is the `canonical' set of Coxeter generators 
of $W'$ (in the sense of \cite{DyRef}), with respect to  the simple reflections $S^{a}$ of $W^{a}$. 
If $\Sigma$ is irreducible, then  any two  simple  systems of $\Sigma$ are in the same 
$W'\times\set{\pm 1}$-orbit, as follows from \cite{Mo},
from \ref{ss:based root datum} and \cite{HowTay}, or from \cite{DyRig}.

  If $\Delta$ is a simple system of some root subsystem $\Sigma$ of   
  $\Phi^a$, we refer to $\Delta$ as a {\it simple subsystem} of $\Phi^a$.
  If  $\Delta\subseteq \Phi$, we write
  $W_{\Delta}:=\mpair{s_{\alpha}\mid \alpha\in \Delta}$ for the subgroup of $W$ 
  generated by reflections corresponding to elements of $\Phi$ and   
  $\Phi_{\Delta}=W_{\Delta}\Delta:=\mset{\alpha\in \Phi\mid s_{\alpha}\in W_{\Delta}}$ 
  for the corresponding root subsystem of $\Phi$.

\subsection{} \label{ss1.4}  
%\mar{ss1.4} 
 Let   $\pi$ denote   the projection $\wh V\rightarrow V$  with  $\delta\mapsto 0$. 
 Note that  
 %\mar{rootproj}
 \begin{equation}\label{rootproj}
 \pi(\Phi^a)\subseteq \Phi,\qquad \mpair{\alpha,\beta}=
 \mpair{\pi(\alpha),\pi(\beta)} \text{ \rm for  $\alpha,\beta\in \Phi^a$}.\end{equation}
 The next result follows immediately from this  by well known properties \cite[Ch VI, \S 2, no. 3]{Bour}  
 of rank two root subsystems of $\Phi$.

 \begin{lemma}\label{lem1}
% \mar{lem1}
 \begin{num} \item A subset $\Delta$ of  $\Phi$ (resp. $\Phi^a$) is a 
 simple subsystem of $\Phi$   if and only if $\Delta$ is positively independent,
and $\mpair{\alpha,\beta}\leq 0$ for all distinct $\alpha,\beta\in \Delta$.
\item  A subset $\Delta$
  of $\Phi_{+}$ (resp., $\Phi^a_{+}$) is the canonical simple system of  some reflection subgroup 
  $W'$ of $W$ (resp. $W^a $) if and only if  $\mpair{\alpha,\beta}\leq 0$ for all distinct
  $\alpha,\beta\in \Delta$.\end{num} \end{lemma}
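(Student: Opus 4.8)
The plan is to reduce the whole statement to a single metric observation and then read off both parts from the definitions of \S\ref{ss1.3}. The observation is that for $\alpha,\beta\in\Phi^a$ the inner-product condition in the definition of a simple system, $\mpair{\dot\alpha,\dot\beta}\in\COS$, is equivalent --- given that $\Phi$ is crystallographic --- to the condition $\mpair{\alpha,\beta}\le 0$. Everything else is bookkeeping, together with the fact (noted in \S\ref{ss1.3}) that every subset of $\Phi^a_{+}$ is positively independent.

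To prove the observation, note first that $\COS\subseteq(-\infty,0]$ and that $\dot\alpha,\dot\beta$ are positive multiples of $\alpha,\beta$; hence $\mpair{\dot\alpha,\dot\beta}\in\COS$ trivially forces $\mpair{\alpha,\beta}\le 0$. For the converse put $a=\pi(\alpha)$ and $b=\pi(\beta)$, which lie in $\Phi$ by \eqref{rootproj}. That same display gives $\mpair{\alpha,\beta}=\mpair{a,b}$ together with $\mpair{\alpha,\alpha}=\mpair{a,a}$ and $\mpair{\beta,\beta}=\mpair{b,b}$, so $\mpair{\dot\alpha,\dot\beta}=\mpair{\dot a,\dot b}$. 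Since $a,b$ are roots of the finite crystallographic system $\Phi$, the rank two analysis \cite[Ch VI, \S 2, no. 3]{Bour} gives $4\mpair{\dot a,\dot b}^{2}=\mpair{a,\ck b}\mpair{b,\ck a}\in\{0,1,2,3,4\}$, whence $\mpair{\dot a,\dot b}\in\{0,\pm\tfrac12,\pm\tfrac{\sqrt2}{2},\pm\tfrac{\sqrt3}{2},\pm1\}$. If moreover $\mpair{\alpha,\beta}\le 0$ this value is nonpositive and therefore lies in $\set{-\cos\tfrac\pi2,-\cos\tfrac\pi3,-\cos\tfrac\pi4,-\cos\tfrac\pi6}\cup\{-1\}\subseteq\COS$, as required.

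For (a), if $\Delta$ is a simple subsystem then it is by definition positively independent and satisfies the $\COS$ condition, whence $\mpair{\alpha,\beta}\le 0$ for distinct $\alpha,\beta$ by the easy half above. Conversely, suppose $\Delta\subseteq\Phi^a$ is positively independent with $\mpair{\alpha,\beta}\le 0$ for distinct elements; the observation upgrades this to $\mpair{\dot\alpha,\dot\beta}\in\COS$. Setting $W'=\mpair{s_\alpha\mid\alpha\in\Delta}$, one has $\Sigma:=W'\Delta\subseteq\Phi^a$ (using $W^a$-stability of $\Phi^a$, Proposition \ref{prop:afflinear}, and $s_{w\alpha}=ws_\alpha w\inv$), and the standard theory of root systems of Coxeter groups supplied by the based root datum of \S\ref{ss:based root datum} (see \cite{Semi}) then shows that $\Sigma$ is precisely the root subsystem corresponding to $W'$ and that $\Delta$ is a simple system for it, i.e.\ a simple subsystem. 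For $\Delta\subseteq\Phi$ the same equivalences reduce the claim to Lemma \ref{lem:finsimple}, once one notes that $\mpair{\alpha,\ck\beta}\le 0\iff\mpair{\alpha,\beta}\le 0$ and that, in the positive definite space $V$, a set of vectors with pairwise nonpositive inner products is positively independent if and only if it is linearly independent (splitting a relation $\sum c_\gamma\gamma=0$ as $u=\sum_{c_\gamma>0}c_\gamma\gamma=\sum_{c_\gamma<0}(-c_\gamma)\gamma$ gives $\mpair{u,u}\le 0$, so $u=0$).

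For (b), the hypothesis $\Delta\subseteq\Phi^a_{+}$ (resp.\ $\Phi_{+}$) makes positive independence automatic, so the content of (a) applies with $\mpair{\alpha,\beta}\le 0$ as the sole condition. If it holds, (a) makes $\Delta$ a simple subsystem of the reflection subgroup $W'=\mpair{s_\alpha\mid\alpha\in\Delta}$; since $\Delta$ lies in $\Phi^a_{+}$ and the canonical simple system $\Delta_{W'}$ is the unique simple system of that subgroup contained in $\Phi^a_{+}$, we conclude $\Delta=\Delta_{W'}$, while the reverse implication is again the easy half. The one genuinely substantive point, and the step I expect to be the main obstacle, is the affine converse in (a): because the form on $\wh V$ is only positive semidefinite, positive independence no longer forces linear independence, so one cannot argue as in the finite case and must instead invoke the Coxeter-theoretic structure of \S\ref{ss:based root datum} to know that a positively independent, crystallographically obtuse set really is the simple system of the reflection subgroup it generates.
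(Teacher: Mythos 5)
Your proof is correct and follows essentially the same route as the paper, which disposes of the lemma in one line by citing \eqref{rootproj} together with the rank-two classification \cite[Ch VI, \S 2, no.~3]{Bour}: the projection $\pi$ preserves inner products, so the crystallographic rank-two analysis shows $\mpair{\dot\alpha,\dot\beta}\in\COS$ is equivalent to $\mpair{\alpha,\beta}\leq 0$, after which both parts are read off from the definitions in \S\ref{ss1.3}. Your additional observations --- the equivalence of positive independence and linear independence for pairwise obtuse sets in the positive definite space $V$ (reducing the finite case to Lemma \ref{lem:finsimple}), and the use of uniqueness of the canonical simple system for part (b) --- are exactly the bookkeeping the paper leaves implicit.
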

  \begin{remark} More generally, this Lemma applies mutatis mutandis to  
  real root systems of Kac-Moody Lie algebras e.g. see  \cite[5.5, Theorem 1]{Mo}.\end{remark}

 \subsection{Negative product subsets}   \label{ss1.5}  
  %\mar{ss1.5}
\begin{definition}\label{def:npsubset}
We say that a subset $\Delta$ of $\wh V$ is a  \nip  (negative  product  subset) 
   if $\mpair{\alpha,\beta}\leq 0$ for all distinct $\alpha,\beta\in \Delta$.
\end{definition}
   \begin{lemma}\label{nipproj}
  % \mar{nipproj} 
   \begin{num}\item    $\Delta\subseteq \Phi^a$ is a \nip if and only if 
   the restriction of $\pi$ to $\Delta$ is injective and  $\pi(\Delta)$ is a \nip  of $\Phi$.  
   \item Given a \nip 
   $\Delta'$ of $\Phi$, the \nips $\Delta$ of $\Phi^a$ with 
   $\pi(\Delta)=\Delta'$ are the sets of the form 
   $\Delta=\mset{\alpha+f(\alpha)\delta\mid \alpha\in \Delta'}$ 
   where $f\colon \Delta'\rightarrow \Int$ is an arbitrary function.
  \item For $\Delta$ and $f$ as in $\text{\rm (b)}$, 
  We have $\Delta\subseteq \Phi^a_{+}$ if and only if $f(\alpha)\geq 0$ 
  for $\alpha\in \Phi_{+}$ and $f(\alpha)>0$ for $\alpha\in -\Phi_{+}$.
  \end{num}\end{lemma}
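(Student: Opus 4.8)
The plan is to obtain all three parts directly from the definitions together with the inner-product relation recorded in \eqref{rootproj}, namely $\mpair{\alpha,\beta}=\mpair{\pi(\alpha),\pi(\beta)}$ for $\alpha,\beta\in\Phi^a$; this holds because $\delta$ spans the radical of the form on $\wh V$, so that the $\delta$-components contribute nothing to inner products. The only genuinely substantive point is the injectivity assertion in (a), which rests on the positive definiteness of the form on $V$; the remainder is bookkeeping.

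For (a) I would first treat the forward implication. Suppose $\Delta$ is an \nip of $\Phi^a$. To see that $\pi|_\Delta$ is injective, suppose to the contrary that $\pi(\alpha)=\pi(\beta)$ for distinct $\alpha,\beta\in\Delta$; writing $\alpha=a+m\delta$ and $\beta=a+n\delta$ with $a\in\Phi$ and $m\neq n$, relation \eqref{rootproj} gives $\mpair{\alpha,\beta}=\mpair{a,a}>0$, since $a$ is a nonzero root and the form on $V$ is positive definite. This contradicts the \nip condition, so $\pi|_\Delta$ is injective. That $\pi(\Delta)$ is an \nip of $\Phi$ then follows at once, since distinct elements of $\pi(\Delta)$ are images of distinct elements of $\Delta$ and \eqref{rootproj} transports $\mpair{\alpha,\beta}\le 0$ to $\mpair{\pi(\alpha),\pi(\beta)}\le 0$. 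The converse runs the same computation in reverse: given that $\pi|_\Delta$ is injective and $\pi(\Delta)$ is an \nip, any distinct $\alpha,\beta\in\Delta$ have distinct images, whence $\mpair{\alpha,\beta}=\mpair{\pi(\alpha),\pi(\beta)}\le 0$.

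Part (b) I expect to be an immediate consequence of (a). Any \nip $\Delta$ of $\Phi^a$ with $\pi(\Delta)=\Delta'$ has $\pi|_\Delta$ injective by (a), so $\pi\colon\Delta\to\Delta'$ is a bijection; the unique preimage of each $\alpha\in\Delta'$ lies in $\Phi^a=\Phi+\Int\delta$ and projects to $\alpha$, hence equals $\alpha+f(\alpha)\delta$ for a uniquely determined $f(\alpha)\in\Int$. This writes $\Delta$ in the asserted form. Conversely, for an arbitrary function $f\colon\Delta'\to\Int$ the set $\mset{\alpha+f(\alpha)\delta\mid\alpha\in\Delta'}$ is contained in $\Phi^a$, is mapped injectively by $\pi$ onto the \nip $\Delta'$, and is therefore an \nip by (a); thus $f$ may indeed be chosen arbitrarily.

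For (c) I would simply unwind Definition \ref{def:phia}: the element $\alpha+f(\alpha)\delta$ lies in $\Phi^a_{+}$ exactly when $f(\alpha)\geq 0$, strengthened to $f(\alpha)>0$ precisely when $\alpha\in-\Phi_{+}$. Imposing this for every $\alpha\in\Delta'$ and separating the cases $\alpha\in\Phi_{+}$ and $\alpha\in-\Phi_{+}$ gives the stated criterion for $\Delta\subseteq\Phi^a_{+}$. I do not anticipate any real obstacle: the lemma is in essence a restatement of \eqref{rootproj} and the definition of $\Phi^a_{+}$, and the single step needing care is the appeal to positive definiteness that forces $\pi|_\Delta$ to be injective.
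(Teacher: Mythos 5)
Your proposal is correct and follows exactly the route the paper intends: the paper's proof of Lemma~\ref{nipproj} consists of the single sentence ``This follows directly from \eqref{rootproj}'', and your argument is precisely the unwinding of that remark, with the one substantive point (injectivity of $\pi|_\Delta$ via $\mpair{a,a}>0$ for a root $a$) correctly identified and handled.
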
 
 \begin{proof} This follows directly from \eqref{rootproj}. \end{proof}

\subsection{} As an immediate Corollary of  Lemmas  \ref{lem1} and \ref{nipproj},  
we obtain a   parameterisation of  the reflection subgroups  of $W^a $ 
(or equivalently, root subsystems of $\Phi^a$) in terms of \nips of $\Phi$. 

For any additive  subgroup $Z$ of $\real$, let  $\RR(\Phi,Z)$ denote the set of all 
pairs $(\Gamma,f)$ where $\Gamma$ is a \nip of $\Phi$
and $f\colon \Gamma\rightarrow Z\cap \real_{\geq 0}$ is a function such that  
$f(\alpha)>0$ if $\alpha\in -\Phi_{+}\cap \Gamma$.

\begin{theorem}\label{thm1}  
%\mar{thm1}  
The reflection subgroups of the affine 
Weyl group $W^a $ are in natural bijective correspondence with $\RR(\Phi,\Int)$. 
The correspondence attaches to $(\Gamma,f)\in \RR(\Phi,\Int)$ the unique  reflection  
subgroup $W'$ of  $W^a $ with $\Delta_{W'}=\mset{\alpha+f(\alpha)\delta\colon \alpha\in \Gamma}$.
\end{theorem}
We denote the reflection subgroup $W'$ of $W^{a}$ attached to the pair 
$(\Gamma,f)\in \RR(\Phi,\Int)$ of the Theorem by $W^{a}(\Gamma,f)$,   
its corresponding root system in $\Phi^a$ by $\Phi^a(\Gamma,f)$, and 
its canonical simple system by $\Delta(\Gamma,f)\subseteq \Phi_{+}$.
A particular case is  $W^{a}=W^{a}(\Gamma_{0},f_{0})$, where
$(\Gamma_{0},f_{0})\in \RR(\Phi,\Int)$  is defined by
$\Gamma_{0}=\pi(\wh \Pi)$ and \begin{equation}f_{0}(\alpha):=\begin{cases} 0,&\text{if $\alpha\in \Phi_{+}$}\\
1,&\text{if $\alpha\in -\Phi_{+}$}\end{cases}\end{equation}

 \subsection{Classification of \nips} \label{ss1.6}   
% \mar{ss1.6} 
 The  \nips of $\Phi$ are described in the following Lemma.
     \begin{lemma}\label{lem2}
   %  \mar  {lem2}
Let $\Gamma'$ be a simple subsystem of $\Phi$ with  indecomposable  components $\Gamma'_{i}$ 
for $i=1,\ldots, n$. For each $i$, let $\Gamma_{i}$ denote either $\Gamma'_{i}$ or 
$\Gamma'_{i}\cup\set{\alpha_{i}}$ where either  $-\alpha_{i}$ is the highest root 
of  $\Phi_{\Gamma_{i}'}$ or $-\ck \alpha_{i}$ is the highest root of $\ck  \Phi_{\Gamma_{i}'}$ (or both).
Then $\Gamma:=\cup_{i}\Gamma_{i}$ is a \nip  of $\Phi$ whose components are the $\Gamma_{i}$.
Moreover any \nip  $\Gamma$ of $\Phi$ is obtained in this way
from some simple subsystem $\Gamma'$ of $\Phi$.
\end{lemma}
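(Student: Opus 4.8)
The plan is to prove Lemma~\ref{lem2} in two directions: first that every set $\Gamma$ constructed from a simple subsystem $\Gamma'$ in the stated manner is a \nip whose components are the $\Gamma_i$, and second (the substantive direction) that every \nip arises this way. For the first direction, I would argue componentwise. Each $\Gamma_i$ is either $\Gamma'_i$ (already a \nip, being a simple system) or $\Gamma'_i\cup\set{\alpha_i}$. In the latter case I must check that $\mpair{\alpha_i,\gamma}\leq 0$ for all $\gamma\in\Gamma'_i$. Since $-\alpha_i$ is the highest root of $\Phi_{\Gamma'_i}$ (or dually), this follows from Lemma~\ref{lem:negchamber}: the highest root lies in the closed fundamental chamber $\CC$, so $\mpair{\alpha_i,\gamma}=-\mpair{-\alpha_i,\gamma}\leq 0$ for each simple root $\gamma\in\Gamma'_i$. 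The dual case is handled by applying the same reasoning to $\ck\Phi$ and using that $\mpair{\alpha,\beta}$ and $\mpair{\ck\alpha,\ck\beta}$ have the same sign. Orthogonality between distinct $\Gamma_i$ is inherited from the orthogonality of the $\Gamma'_i$, since $\alpha_i\in\real\Gamma'_i$. The claim that the $\Gamma_i$ are precisely the components of $\Gamma$ then follows because each $\Gamma_i$ spans the same subspace as $\Gamma'_i$ and these subspaces are mutually orthogonal; indecomposability of each $\Gamma_i$ holds because adjoining $\alpha_i$ to the already-indecomposable $\Gamma'_i$ cannot disconnect it.

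For the converse, let $\Gamma$ be an arbitrary \nip of $\Phi$. I would decompose $\Gamma$ into its indecomposable components and argue one component $\Gamma_i$ at a time, so assume $\Gamma$ is itself indecomposable. There are two cases according to whether $\Gamma$ is linearly independent. If $\Gamma$ is linearly independent, then by Lemma~\ref{lem:finsimple} it is already a simple subsystem, and we take $\Gamma'=\Gamma$ with no adjoined root, so the conclusion holds trivially. If $\Gamma$ is linearly dependent, then $\Gamma$ is an indecomposable, linearly dependent \nip of the crystallographic system $\Phi$, which is exactly the hypothesis of Corollary~\ref{cor:indecnip}. That Corollary supplies an element $\theta\in\Gamma$ such that $\Gamma'=\Gamma\setminus\set{\theta}$ is a simple system for an indecomposable root subsystem $\Phi'\subseteq\Phi$ (so $\Phi'=\Phi_{\Gamma'}$) with $\theta\in\Phi'$, and moreover (by Proposition~\ref{prop:indec}) a positive multiple $-c_\theta\theta$ is the highest root of $\Phi'$ with respect to $\Gamma'$. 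Since $\theta\in\Phi$ is itself a root and $\theta\in\Phi'$, the scalar $c_\theta$ must equal~$1$ after normalising, so $-\theta$ itself is the highest root of $\Phi'$ in the \emph{linear} realisation, or dually $-\ck\theta$ is the highest root of $\ck\Phi'$; this matches the description of $\alpha_i=\theta$ required in the Lemma.

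The step I expect to be the main obstacle is reconciling the scalar normalisation coming out of Proposition~\ref{prop:indec} with the requirement in Lemma~\ref{lem2} that $-\alpha_i$ (rather than a positive multiple of it) be an actual highest root. Proposition~\ref{prop:indec} only produces the simple roots and highest root up to a common positive scalar; I must verify that when $\Gamma\subseteq\Phi$ consists of genuine roots, this scalar is forced to be compatible, i.e. that $-\theta$ is exactly a highest root of $\Phi'$ (or $-\ck\theta$ a highest root of $\ck\Phi'$), invoking Lemma~\ref{lem:negchamber} to pin down which of the two (possibly both) alternatives occurs. The reasoning is that $\theta\in\Phi'\cap(-\CC')$, where $\CC'$ is the fundamental chamber of $W(\Phi')$, so $-\theta\in\Phi'\cap\CC'$, and Lemma~\ref{lem:negchamber} identifies the roots in $\CC'$ as precisely the highest root $\omega$ of $\Phi'$ and $\ck{\omega^*}$, the coroot of the highest root of $\ck\Phi'$. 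Thus $-\theta$ is one of these two distinguished roots, which is exactly the dichotomy asserted in the statement. A final remark would note that the decomposition of a general \nip into components, and the choice of $\Gamma'$ and the $\alpha_i$, is unique up to the evident ambiguity (whether a given component acquires an adjoined root), completing the proof.
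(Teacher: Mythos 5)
Your proof is correct and follows essentially the same route as the paper, which likewise deduces the forward direction from standard properties of highest roots and obtains the converse by applying Proposition~\ref{prop:indec} (in the packaged form of Corollary~\ref{cor:indecnip}) to each indecomposable component together with Lemma~\ref{lem:negchamber}. The only superfluous step is the worry about normalising $c_\theta$ to $1$: once Corollary~\ref{cor:indecnip} gives $\theta\in\Phi'$, your closing argument that $-\theta\in\Phi'\cap\CC'$ combined with Lemma~\ref{lem:negchamber} already identifies $-\theta$ as the highest root of $\Phi'$ or the coroot of the highest root of $\ck{\Phi'}$, with no scalar left to reconcile.
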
  
\begin{proof} Well known properties of highest roots \cite[Ch VI,\S 1. 8]{Bour}
imply that $\Gamma$ as in the Lemma is a \nip of $\Phi$.  The converse follows by applying 
Proposition \ref{prop:indec}  to the indecomposable components $\Gamma_{i}$ of $\Gamma$ 
and using Lemma \ref{lem:negchamber}.
\end{proof} 

\begin{remark}\label{rem3} 
%\mar  {rem3}
Suppose  $\Gamma$ is a \nip of $\Phi$. 
The components $\Gamma_{i}$ are uniquely determined  by $\Gamma$, but $\Gamma_{i}$
need not uniquely determine $\alpha_{i}$ and 
$\Gamma_{i}'$ as above  if $\Gamma_{i}'\neq \Gamma_{i}$.
However, $W_{\Gamma}=W_{\Gamma'}$, and any two choices of $\Gamma'$ 
(for fixed $\Gamma$) are in the same $W_{\Gamma}$ orbit.  
See also Remark \ref{rem:nipstab}.
\end{remark}

\subsection{} \label{s1.7}  
%  \mar  {s1.7}   
We  now  obtain the following known description (\cite{DyRef}) of the reflection subgroups 
of $W^a $, up to isomorphism as Coxeter systems.

\begin{theorem} \label{thm2} 
% \mar  {thm2} 
Consider a root subsystem $\Psi$ of $\Phi$ 
with  indecomposable components 
$\Psi_{i}$ for $i=1,\ldots, n$. For each $i$, let $W_{i}$ denote either the Weyl group 
of $\Psi_{i}$, the affine Weyl group corresponding to $\Psi_{i}$ or the affine Weyl 
group corresponding to $\ck \Psi_{i}$. Each $W_{i}$ has a standard realisation as a Coxeter 
system, with simple reflections determined up to conjugacy. 

The affine Weyl group $W^a $ has a reflection 
subgroup isomorphic to $W':=W_{1}\times \ldots \times W_{n}$
as a Coxeter system. Conversely, any reflection subgroup of $W^a $ 
is isomorphic to a Coxeter system $W'$ arising in this way.   \end{theorem} 

\begin{proof} Using Lemma \ref{lem2}, the isomorphism type of the reflection subgroup 
attached to 
$(\Gamma,f)$ in Theorem  \ref{thm1} may be described as follows. Suppose $\Gamma$ 
is obtained as in Lemma \ref{lem2} from the simple subsystem  $\Gamma'$ of $\Phi$, and let 
$\Gamma_{i},\Gamma'_{i}$ be as the sets of roots defined there.
Then $W'$ has  indecomposable components $W'_{i}$ with simple roots 
$\Delta_{i}=\mset{\alpha+f(\alpha)\delta\colon \alpha\in \Gamma_{i}}$, 
for $i=1,\ldots, n$.
If $\Gamma_{i}=\Gamma'_{i}$, then $W'_{i}$ is isomorphic to the Weyl 
group of $\Phi_{\Gamma'_{i}}$.  If $\Gamma_{i}=\Gamma'_{i}\cup\set{\alpha_{i}}$ 
where $-\alpha_{i}$ is the highest root of $\Phi_{\Gamma'_{i}}$,  then $W'_{i}$ is 
isomorphic to the affine  Weyl group of $\Phi_{\Gamma'_{i}}$.  Finally,  
if $\Gamma_{i}=\Gamma'_{i}\cup\set{\alpha_{i}}$ where $-\ck \alpha_{i}$ is 
the highest root of $\ck \Phi_{\Gamma'_{i}}$,  then $W'_{i}$ is isomorphic to 
the affine  Weyl group of $\ck\Phi_{\Gamma'_{i}}$. 
\end{proof}

\subsection{} \label{ss4.3}
%\mar  {ss4.3} 
The next Lemma  gives necessary and sufficient  
conditions for  a \nip $\Delta\subseteq \Phi^a$ to be positively independent in terms 
of its  parameterisation  in Lemma \ref{nipproj}(b).

\begin{lemma} \label{lem4} 
%\mar  {lem4} 
Let $\Gamma$ be a \nip  of  $\Phi$ and 
$f\colon \Gamma\rightarrow \Int$ be a function.  Let $\Gamma_{i}$ for 
$i=1,\ldots,n$ denote the components of  $\Gamma$.
Let $J$ denote the set of indices $i$ with $1\leq i\leq n$ and $\Gamma_{i}$ 
linearly dependent.
For each $i\in J$ there are unique 
integers $c_{\gamma}\in \Nat_{> 0}$  for $\gamma\in\Gamma_i$
such that  $\sum_{\gamma\in \Gamma_{i}}c_{\gamma}\gamma=0$ 
and  $\gcd(\mset{c_{\gamma}\mid \gamma\in \Gamma_{i}})=1$; we set  
$d_{i}:=\sum_{\gamma\in \Gamma_{i}}   c_{\gamma}f_{\gamma}$. Let $\Delta:=\mset{\gamma+f(\gamma)\delta
\mid \gamma\in \Gamma}$
\begin{num}\item $\Delta$ is positively independent if and only if  either $d_{i}>0$ 
for all $i\in J$ or $d_{i}<0$ for all $i\in J$.
\item each component of $\Delta$ is positively independent if and only if $d_{i}\neq 0$ for all $i\in J$.
\end{num} 

\end{lemma}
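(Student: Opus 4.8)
The plan is to reduce everything to the projection $\pi$ together with a separate bookkeeping of the $\delta$-coefficient. First I would record that, by Lemma \ref{nipproj}(a) and \eqref{rootproj}, the map $\pi$ restricts to a bijection $\Delta\to\Gamma$ which preserves the form, since $\mpair{\gamma+f(\gamma)\delta,\,\gamma'+f(\gamma')\delta}=\mpair{\gamma,\gamma'}$ ($\delta$ lying in the radical). Consequently the orthogonality relations among elements of $\Delta$ coincide with those among their images, so the components of $\Delta$ are exactly $\Delta_i:=\mset{\gamma+f(\gamma)\delta\mid\gamma\in\Gamma_i}$ for $i=1,\dots,n$. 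This identification is what lets me handle (a) and (b) uniformly.

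Next I would analyse an arbitrary nonnegative relation $\sum_{\gamma\in\Gamma}a_\gamma(\gamma+f(\gamma)\delta)=0$ with all $a_\gamma\geq 0$. Applying $\pi$ gives $\sum_{\gamma\in\Gamma}a_\gamma\gamma=0$, and reading off the coefficient of $\delta$ gives $\sum_{\gamma\in\Gamma}a_\gamma f(\gamma)=0$. Since the $\Gamma_i$ are mutually orthogonal, $\real\Gamma$ is the orthogonal direct sum of the $\real\Gamma_i$, so the first relation splits as $\sum_{\gamma\in\Gamma_i}a_\gamma\gamma=0$ for each $i$. For $i\notin J$ the set $\Gamma_i$ is linearly independent, forcing $a_\gamma=0$ on $\Gamma_i$. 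For $i\in J$, Proposition \ref{prop:indec} (equivalently, the uniqueness asserted in the statement) shows the relation space is one-dimensional, spanned by the strictly positive vector $(c_\gamma)_{\gamma\in\Gamma_i}$; hence $a_\gamma=t_ic_\gamma$ for a scalar $t_i$, and $a_\gamma\geq 0$ together with $c_\gamma>0$ forces $t_i\geq 0$. Substituting into the $\delta$-coefficient relation collapses it to $\sum_{i\in J}t_id_i=0$.

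I would then finish by a short linear-algebra fact. By the previous step, a nontrivial nonnegative relation on $\Delta$ exists exactly when there is a nonzero tuple $(t_i)_{i\in J}$ with $t_i\geq 0$ and $\sum_{i\in J}t_id_i=0$. Such a tuple exists if and only if the $d_i$ are not all strictly positive and not all strictly negative: if some $d_i=0$ take $t_i=1$ and the rest $0$, and if $d_i>0>d_j$ take $t_i=-d_j$, $t_j=d_i$ and the rest $0$; conversely if all $d_i$ share one strict sign, then $\sum t_id_i=0$ with $t_i\geq 0$ forces every $t_i=0$. This is precisely the negation of the condition in (a), proving (a). For (b), I would rerun the relation analysis inside a single component $\Delta_i$: for $i\notin J$, $\Delta_i$ projects bijectively onto the linearly independent set $\Gamma_i$ and so is (positively) independent, while for $i\in J$ the only relation is $a_\gamma=t_ic_\gamma$ with $t_id_i=0$, so $\Delta_i$ is positively independent precisely when $d_i\neq 0$. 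Hence every component of $\Delta$ is positively independent if and only if $d_i\neq 0$ for all $i\in J$.

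The main obstacle, such as it is, lies in the second step: justifying that a nonnegative relation on $\Gamma$ must be supported on the linearly dependent components and be a \emph{nonnegative} multiple of $(c_\gamma)$ there. This rests on the orthogonal decomposition of $\real\Gamma$ into the $\real\Gamma_i$ and on the one-dimensionality of the relation space of each dependent component, which is exactly the content of Proposition \ref{prop:indec}; the sign of the scalar is then forced by the positivity of the $c_\gamma$. Everything else is elementary bookkeeping of the $\delta$-coefficient.
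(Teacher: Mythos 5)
Your proposal is correct and follows essentially the same route as the paper's proof: both reduce a nonnegative relation on $\Delta$ to the pair of conditions $\sum_\gamma a_\gamma\gamma=0$ and $\sum_\gamma a_\gamma f(\gamma)=0$, use the uniqueness (up to positive scalar) of the relation on each linearly dependent component to write $a_\gamma=t_ic_\gamma$ with $t_i\geq 0$, and conclude from $\sum_{i\in J}t_id_i=0$. The only cosmetic difference is that you spell out the elementary sign argument at the end, which the paper leaves implicit.
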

\begin{proof} Each $\Gamma_{i}$ is either linearly independent or there 
is a  (real) linear relation among its elements, which is
unique up to a scalar  by \cite[Ch V, \S 3, Lemma 5]{Bour}. 
Since $\Gamma$ is contained in the lattice  $\Int \Phi$, the coefficients of these linear
relations can
be taken in $\rat_{>0}$ by loc.  cit., and then by rescaling,  the existence and uniqueness  of the $c_{\gamma}$  
for $\gamma\in \Gamma_{i}$, $i\in J$  satisfying the conditions  of the Lemma follows.

For any set of scalars $a_{\gamma}\in \real_{\geq 0}$ ($\gamma\in \Gamma$), 
we have $\sum_{\gamma\in \Gamma}a_{\gamma}(\gamma+f(\gamma)\delta)=0$ if
and only if
$\sum_{\gamma}a_{\gamma}\gamma=0$ and $\sum_{\gamma}a_{\gamma }f(\gamma)=0$.
Now by \cite{Bour},   $\sum_{\gamma}a_{\gamma}\gamma=0$ if and
only if there are non-negative  
scalars $b_{i}$ for $i\in J$,  such that $a_{\gamma}=b_{i}c_{\gamma}$ 
if $\gamma\in \Gamma_{i}$ for some $i\in J$, and 
$a_{\gamma}=0$  otherwise. For such scalars $a_{\gamma}$, we have
$\sum_{\gamma}a_{\gamma}f(\gamma)=\sum_{i\in J}b_{i}d_{i}$. Hence 
$\Delta$ is positively independent precisely if,
for all non-negative scalars $b_{i}$ ($i\in J$) which are not all zero,
$\sum_{i\in J}b_{i}d_{i}\neq 0$. The assertion (a) follows immediately from this.
The  components of $\Delta$ are $\Delta_{i}:= \mset{\alpha+f(\alpha)\delta\colon \alpha\in \Gamma_{i}}$ 
for $i=1,\ldots, n$, so (b) follows from (a) applied to each  $\Gamma_{i}$.
\end{proof}

\begin{definition}\label{def:compatible}
%\mar{check!}
If the condition in $\text{\rm (a)}$ (resp. $\text{\rm (b)}$) holds, 
we say that $(\Gamma,f)$ is compatible (resp. strongly compatible).
\end{definition}

\subsection{} \label{ss4.4} 
%\mar  {ss4.4}
 The following parameterisation of 
simple subsystems of $\Phi^a$  follows immediately from Lemma \ref{lem4} and the definitions, 
in the same way as Theorem \ref{thm1}.
\begin{proposition}\label{prop3} 
%\mar  {prop3} 
The simple 
subsystems  of the root system $\Phi^a$ of $W^a $  are in natural bijective 
correspondence with pairs $(\Gamma,f)$ where $\Gamma$ is a \nip of $\Phi$ and
$f\colon \Gamma\rightarrow \Int$ is a function such that $(\Gamma,f)$ is 
compatible.
 The correspondence attaches to $(\Gamma,f)$ the simple  
 subsystem $\mset{\alpha+f(\alpha)\delta\colon \alpha\in \Gamma}$ of  $\Phi^a$.\end{proposition}
 \begin{remark} We note that rank $1$ simple subsystems of $\Phi^{a}$ correspond 
 naturally to roots in $\Phi^{a}$.  Theorem \ref{thm1} and Proposition \ref{prop3}
 directly generalize the descriptions (\eqref{eq:phia}, Proposition \ref{prop:afflinear})  of the 
 positive roots and  roots of $\Phi^a$ in terms of those of $\Phi$.\end{remark}

\subsection{Short and long roots} \label{ss:rootlengthnotation}  
%\mar{ss:rootlengthnotation}
For any non-isotropic 
$v\in \wh V$, let  $k_{v}=\mpair{v,v}\in \real_{>0}$.  If $\Psi$ is a subset of some indecomposable 
crystallographic root system in $V$, then $\mset{k_{v}\mid v\in \Psi}$ has either $1$ or $2$ elements.
   If this set  has two elements, we denote them by $k_{\Psi,+}$ and $k_{\Psi,-}$
   where $k_{\Psi,{+}}>k_{\Psi,-}$. Otherwise, we define
   $k_{\Psi,+}=k_{\Psi,-}:=k_{v}$ for any $v\in \Psi$.
   Set $k_{\Psi}:=\frac{k_{\Psi,+}}{k_{\Psi,-}}$. It is known from \cite{Bour}
   that $k_{\Psi}\in \set{1,2,3}$. 
   Write $\Psi_{\text{\rm short}}:=\mset{v\in \Psi\mid \mpair{v,v}=k_{\Psi,-}}$ and 
    $\Psi_{\text{\rm long}}:=\mset{v\in \Psi\mid \mpair{v,v}=k_{\Psi,+}}$.
      
Using the above notation and that of Lemma \ref{lem2}, we now define
%\mar{this was a bit garbled; please check!}
$k_{\beta,\Gamma}\in \set{1,2,3}$ for any \nip $\Gamma$ and element $\beta\in \Gamma$.  
There is a unique component $\Gamma_i$ in which $\beta$ lies.
If  $\Gamma_{i}=\Gamma'_{i}$, we set  $k_{\beta,\Gamma}:=1$.
Otherwise,  $\Gamma_{i}\neq \Gamma'_{i}$,  and we set
\begin{equation*}k_{\beta,\Gamma}:=\begin{cases} 1,&\text{ if   
$k_{\alpha_{i}}=k_{\Gamma_{i},+}$}\\
 \frac{k_{\beta}}{k_{\alpha_{i}}}\in \set{1,k_{\Gamma_{i}}}&\text{ if  
 $k_{\alpha_i}=k_{\Gamma_{i},-}$.}\end{cases}\end{equation*}

\subsection{}  The technical lemma below  describes the  root coefficients in certain 
root subsystems of $\Phi^a$.
%\mar{this para changed substantially; please check!}
Let $\Gamma$ be an  indecomposable \nip of $\Phi$. Let $\Gamma':=\Gamma$ 
if $\Gamma$ is linearly independent.
%\mar{changed indecomposable to linearly independent} 
Otherwise, write  $\Gamma=\Gamma'\cup \set{-\alpha}$
where $\Gamma'$ is an indecomposable simple subsystem of $\Phi$  and 
$\mpair{\alpha,\Gamma'}\subseteq \real_{\geq 0}$. Let $\Sigma$ be the 
root subsystem of $\Phi$ with simple system $\Gamma'$.
   
\begin{lemma}  \label{lem:affrootcoeff}  
%\mar  {lem:affrootcoeff--a few changes} 
Maintain the above notation.
If $\Gamma=\Gamma'$, set $\Delta':=\Gamma$; otherwise, set  
$\Delta':=\Gamma'\cup\set{\omega}\subseteq \Phi^a$ where $\omega:=\delta-\alpha$.
Let $W':=\mpair{s_{\gamma}\mid \gamma\in \Delta'}$. Then $W'$ is  
a reflection subgroup of $W^a $ having $\Delta'$ as a simple system. 
Let $\Psi':=W'\Delta$ denote the indecomposable root subsystem of $\Phi^a$ 
corresponding to $\Delta'$.

\begin{num}\item Define coefficients $a_{\beta,\gamma}$ 
for $\beta\in \Sigma$ and $\gamma\in \Gamma'$, by
$\beta=\sum_{\gamma\in \Gamma'}a_{\beta,\gamma}\gamma$.
If $\Gamma$ is linearly dependent, then  
\[\beta+n\delta=n\omega+\sum_{\gamma\in \Gamma'}(a_{\beta,\gamma}+na_{\alpha,\gamma})\gamma\]
for $\beta\in \Sigma$ and $n\in \Int$.
\item  If $\Gamma$ is linearly independent, 
then $\Psi'=\Sigma$.
\item If $\Gamma$ is linearly dependent then 
$\Psi'=\mset{\beta+nk_{\beta,\Gamma}\delta\mid \beta\in \Sigma, n\in \Int}$.\end{num}
\end{lemma}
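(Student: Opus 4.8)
The plan is to dispose of the preliminary assertions and of parts (a) and (b) quickly, and then to reduce part (c) to an explicit description of the translation subgroup of the affine reflection group $W'$. First I would verify that $\Delta'$ is a simple subsystem of $\Phi^a$: it is linearly independent (a relation among $\Gamma'\cup\set{\omega}$ must have vanishing $\omega$-coefficient, since $\delta\notin V$, and then reduces to a relation among the linearly independent $\Gamma'$), and it is a \nip because $\mpair{\gamma,\omega}=-\mpair{\gamma,\alpha}\leq 0$ for $\gamma\in\Gamma'$, by the hypothesis $\mpair{\alpha,\Gamma'}\subseteq\real_{\geq 0}$. Lemma \ref{lem1}(a) then shows that $\Delta'$ is a simple system of $\Psi'=W'\Delta'$, which is indecomposable because $\Delta'$ has the same diagram as $\Gamma$. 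Part (a) is the pure computation obtained by substituting $\omega=\delta-\alpha$ and $\alpha=\sum_{\gamma\in\Gamma'}a_{\alpha,\gamma}\gamma$ into the right-hand side, which collapses to $\beta+n\delta$. Part (b) is immediate: when $\Gamma$ is linearly independent, $\Delta'=\Gamma=\Gamma'\subseteq\Phi$, so $W'=W_{\Gamma'}=W(\Sigma)$ and $\Psi'=W'\Gamma'=\Sigma$.

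For part (c) the key observation is a membership criterion. Writing $s_{\beta+n\delta}=t_{n\ck\beta}s_\beta$ by \eqref{eq2}, and letting $T':=\mset{t_\gamma\in W'\mid\gamma\in V}$ be the translation subgroup of $W'$, I would note that the linear part of $W'$ is generated by the $s_\gamma$ ($\gamma\in\Gamma'$) together with the linear part $s_\alpha$ of $s_\omega$, hence equals $W(\Sigma)$; since $s_\beta\in W(\Sigma)$ iff $\beta\in\Sigma$, it follows that
\[\Psi'=\mset{\beta+n\delta\mid \beta\in\Sigma,\ n\ck\beta\in T'},\]
where I identify $t_\gamma$ with $\gamma$, so that $T'$ is a lattice in $V$. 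Thus everything reduces to computing $T'$ and then, for each $\beta\in\Sigma$, the least positive integer $n$ with $n\ck\beta\in T'$.

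I would next identify $T'$. From \eqref{eq2} one has $s_\omega=s_{-\alpha+\delta}=t_{-\ck\alpha}s_\alpha$, so $s_\omega s_\alpha=t_{-\ck\alpha}$ shows $\ck\alpha\in T'$; since $T'$ is $W(\Sigma)$-stable (by \eqref{eq1}) it contains $L$, the $\Int$-span of the orbit $W(\Sigma)\ck\alpha$, while conversely every generator of $W'$ lies in $W(\Sigma)\ltimes\mset{t_\gamma\mid\gamma\in L}$, giving $T'=L$. In the first case, where $\alpha$ is the highest root of $\Sigma$, the set $\Delta'=\Gamma'\cup\set{-\alpha+\delta}$ is exactly the standard affine simple system of $\Sigma$ (Definition \ref{def:phia}), so $W'$ is the affine Weyl group of $\Sigma$, $T'=Q(\Sigma)=\oplus_{\gamma\in\Gamma'}\Int\ck\gamma$, and $n\ck\beta\in T'$ for every $n$. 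Since $\alpha$ is then long we have $k_{\beta,\Gamma}=1$, and Proposition \ref{prop:afflinear}(b) applied to $\Sigma$ gives $\Psi'=\Sigma+\Int\delta$, as required.

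The remaining, and main, case is when $\alpha$ is strictly short (so $-\ck\alpha$ is the highest root of $\ck\Sigma$ and $\alpha$ is the highest short root). Here $W(\Sigma)\alpha$ is the full set of short roots, so $L=\tfrac{2}{k_{\Sigma,-}}\Int\Sigma$, using the standard fact that the short roots generate the root lattice $\Int\Sigma$. For $\beta\in\Sigma$, with $\ck\beta=\tfrac{2}{k_\beta}\beta$, the condition $n\ck\beta\in L$ becomes $\tfrac{n\,k_{\Sigma,-}}{k_\beta}\beta\in\Int\Sigma$, which, since a root is primitive in $\Int\Sigma$, holds iff $k_\beta\mid n\,k_{\Sigma,-}$. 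This yields $n\in\Int$ for short $\beta$ (where $k_\beta=k_{\Sigma,-}$) and $n\in k_\Sigma\Int$ for long $\beta$ (where $k_\beta=k_\Sigma k_{\Sigma,-}$), matching exactly the values $k_{\beta,\Gamma}=1$ and $k_{\beta,\Gamma}=k_\Sigma$ of \S\ref{ss:rootlengthnotation} (read as depending only on the length of $\beta$). Hence $\Psi'=\mset{\beta+nk_{\beta,\Gamma}\delta\mid\beta\in\Sigma,\ n\in\Int}$. I expect the main obstacle to be precisely this last case: one must treat the two root lengths separately and invoke the standard facts that short roots generate the root lattice and that roots are primitive in it, so that the least period of $\beta$ in the translation lattice $T'$ comes out equal to the length-ratio invariant $k_{\beta,\Gamma}$.
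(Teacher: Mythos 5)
Your proof is correct, but for the crucial case of part (c) it takes a genuinely different route from the paper. Both arguments agree on the preliminaries, on (a), (b), and on the sub-case where $\alpha$ is long (reduction to Proposition \ref{prop:afflinear}). For the remaining sub-case ($\alpha$ strictly short), the paper simply dualizes: it applies the already-settled long case to $\ck{\Delta'}=\ck{\Gamma'}\cup\set{b_{\alpha}\delta-\ck\alpha}$, with $\delta':=b_{\alpha}\delta$ playing the role of $\delta$, and then passes back via $\beta\mapsto\ck\beta$, so that the factor $k_{\beta,\Gamma}=b_{\alpha}/b_{\beta}$ drops out formally and no lattice-theoretic input is needed. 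You instead identify $W'$ as $W_{\Sigma}\ltimes t_{L}$ with $L$ the $\Int$-span of $W_{\Sigma}\ck\alpha$, reduce membership of $\beta+n\delta$ in $\Psi'$ to $n\ck\beta\in L$, and compute the periods using the facts that the short roots generate $\Int\Sigma$ and that each root is primitive in $\Int\Sigma$. Both of these facts are true and standard (the first because every long root is a difference of two non-orthogonal short roots by irreducibility; the second via pairing with $\ck\beta$ to rule out denominators other than $2$, and with a $W$-translate of a fundamental coweight to rule out $2$), but they are exactly the ingredients the paper's duality trick is designed to avoid, so if you keep your route you should at least sketch them. What your approach buys is an explicit description of the translation subgroup $T'$ and the semidirect decomposition $W'=W_{\Sigma}\ltimes t_{L}$, which anticipates Proposition \ref{prop:subgpelts}; what the paper's buys is brevity and uniformity, since the short-$\alpha$ case becomes a formal consequence of the long-$\alpha$ case.
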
\begin{proof} Parts (a) and (b) are  trivial. In (c),  the case  
$\Gamma\neq \Gamma'$ with  $k_{\alpha}=k_{\Gamma,+}$ is a special case of 
Proposition \ref{prop:afflinear} since then all $k_{\beta,\Gamma}=1$.
The remaining case is that in which   $\Gamma\neq \Gamma'$ and $k_{\alpha}=k_{\Gamma,-}$.
Set $b_{\beta}=\frac{2}{k_{\beta}}$ for $\beta\in \Sigma$, so $\ck \beta=b_{\beta}\beta$.
We have  $k_{\ck\alpha}=k_{\ck \Gamma,+}$. 
Consider  $\ck{ \Delta'}=\ck{\Gamma'}\cup\set{b_{\alpha}\delta-\ck \alpha}$.
Here, $\delta':=b_{\alpha}\delta$ plays the analogous role for 
$\ck{\Delta'}$ to that played by  $\delta$ for $\Delta'$.
By the case of (c) previously considered,  the 
 root system  $\ck{ \Psi'}$ corresponding to $\ck {\Delta'}$ is
 \[ \ck{ \Psi'}=\mset{ \beta +n\delta'\mid  \beta \in \ck \Sigma, n\in \Int}=
 \mset{\ck \beta +nb_{\alpha}\delta\mid  \beta \in  \Sigma, n\in \Int}.\]
 Hence \[\Psi'=  \mset{\ck{(\ck  \beta +nb_{\alpha}\delta)}\mid  \beta \in  
 \Sigma, n\in \Int}=\mset{ \beta +n\frac{b_{\alpha}}{b_{ \beta }}\delta\mid  \beta 
 \in  \Sigma, n\in \Int}\] which is  as required since 
 $\frac{b_{\alpha}}{b_{\beta}}=\frac{k_{\beta}}{k_{\alpha}}=k_{\beta,\Gamma}$.
\end{proof}

\subsection{} \label{ss:nipnot}  
%\mar  {ss:nipnot} 
Let $\Gamma$ be a \nip of $\Phi$ and let $f\colon \Gamma\rightarrow \Int$ such 
that $(\Gamma,f)$ is compatible.
We assume that $\Gamma$ arises as in Lemma \ref{lem2} from a simple system 
$\Gamma'\subseteq \Gamma$ and use the  notation   
$\Gamma_{i}$, $\Gamma'_{i}$, $\alpha_{i}$ introduced there. 

For each $i$ with $\Gamma'_{i}\neq \Gamma_{i}$, there is a unique linear relation
$\sum_{\gamma\in \Gamma_{i}} c_{\gamma}\gamma=0$ with $c_{\alpha_{i}}=1$, and
in this relation, we have 
$c_{\gamma}\in \Nat_{>0}$ for all $\alpha\in \Gamma_{i}$.

Let $\Sigma=\Phi_{\Gamma'}$ and $\Sigma_{i}=\Phi_{\Gamma'_{i}}$; thus $\Sigma$ is a root
system with indecomposable components $\Sigma_{i}$, of which
$\Gamma'$ is a simple system, and $\Gamma'\subseteq\Gamma\subseteq \Sigma$.
As in Lemma \ref{lem:affrootcoeff},
for each $\beta\in \Sigma$, write $\beta=\sum_{\gamma\in \Gamma'}a_{\beta,\gamma}\gamma$.
Then $a_{\beta,\gamma}\in \Int$,
$a_{\beta,\gamma}=0$ unless $\beta\in \Sigma_{i}$ and $\gamma\in \Gamma_{i}'$ 
for some $i$, and for fixed $\beta\in \Sigma$, either all $a_{\beta,\gamma}$  
are non-negative or they are all non-positive.

By Proposition \ref{prop3}, 
$\Delta:=\mset{\gamma+f(\gamma)\delta\mid \gamma\in \Gamma}$ is a 
simple subsystem of $\Phi^a$. Let $W'':=\mpair{s_{\gamma}\mid \gamma \in \Delta}$ 
be the associated Coxeter group, and $\Psi:=W''\Delta$ be the corresponding root system.

Define  $r_{\beta}\in \Int$ for all $\beta\in \Sigma$ by 
$r_{\beta}:=\sum_{\gamma\in \Gamma'}a_{\beta,\gamma}f(\gamma)=-r_{-\beta}$.
We also define  $K_{i}\in \Int$ for $i=1,\ldots, n$ as follows.
If $\Gamma_{i}=\Gamma'_{i}$, we set $K_{i}=0$. Otherwise,
set  $K_{i}:= \sum_{\gamma\in \Gamma_{i}}c_{\gamma}f(\gamma)=
f(\alpha_{i})+r_{-\alpha_{i}}$.

\begin{proposition} \label{prop:niprootsyst}
%  \mar  {prop:niprootsyst} 
Let $\Gamma$ be a \nip of $\Phi$ and $f\colon \Gamma\rightarrow \Int$
a function such that $(\Gamma,f)$ is compatible. Then the root system 
$\Psi$ of which
$\Delta:=\mset{\alpha+f(\alpha)\delta\mid \alpha\in \Gamma}$ 
is a simple system is given by
\[\Psi=\mset{\beta+(r_{\beta}+mK_{i}k_{\beta,\Gamma})\delta\mid 
i=1,\ldots, n,\beta\in \Sigma_{i},m\in \Int}.\]
\end{proposition}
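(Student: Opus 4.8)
The plan is to compute $\Psi$ by transporting the ``standard'' root system already determined in Lemma \ref{lem:affrootcoeff} to the general one by two explicit isometries of $(\wh V,\mpair{-,-})$: a translation $t_w$ that produces the coefficients $r_\beta$, and a rescaling of $\delta$ that produces the factor $K_i$. The first observation I would record is that for $w\in V$ the linear map $t_w\colon x\mapsto x+\mpair{x,w}\delta$ of \S\ref{ss:linreal} is an \emph{isometry} of $\mpair{-,-}$, since the correction terms all involve $\mpair{\delta,-}=0$; hence $t_w s_v t_w\inv=s_{t_w(v)}$, so $t_w$ carries the root subsystem with simple system $\Sigma'$ to the one with simple system $t_w(\Sigma')$. (That $w$ need not lie in $P$ is immaterial, as $t_w$ is used only as a linear isometry transporting a fixed finite-rank subsystem.) Next, since $\Delta$ decomposes orthogonally as $\Delta=\bigsqcup_i\Delta_i$ with $\Delta_i=\mset{\gamma+f(\gamma)\delta\mid\gamma\in\Gamma_i}$, and $\Psi$ correspondingly splits into the root subsystems generated by the $\Delta_i$ (the standard decomposition of the root system of a based root datum into its components, \S\ref{ss:based root datum}), and since $r_\beta$ and $k_{\beta,\Gamma}$ depend only on the component containing $\beta$, I may assume $\Gamma$ indecomposable and write $K:=K_1$.

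Because $\Gamma'$ is linearly independent and $\mpair{-,-}$ is nondegenerate on $\real\Gamma'=\real\Sigma$, there is $w\in\real\Sigma$ with $\mpair{\gamma,w}=f(\gamma)$ for all $\gamma\in\Gamma'$. By \eqref{eq5} with trivial Weyl element, $t_{-w}$ sends each $\gamma+f(\gamma)\delta$ $(\gamma\in\Gamma')$ to $\gamma$. If $\Gamma$ is linearly dependent, write $\alpha_1=-\theta$ where $\theta=\sum_{\gamma\in\Gamma'}a_{\theta,\gamma}\gamma$ is the highest root of $\Sigma$ (Lemma \ref{lem:negchamber}); then $\mpair{\theta,w}=\sum_\gamma a_{\theta,\gamma}f(\gamma)=r_\theta$, so by \eqref{eq5} the element $\alpha_1+f(\alpha_1)\delta$ is sent by $t_{-w}$ to $\alpha_1+(f(\alpha_1)+r_\theta)\delta=\alpha_1+K\delta$, the last equality being exactly the identity $K=f(\alpha_1)+r_\theta$ of \S\ref{ss:nipnot}. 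Thus $t_{-w}$ carries $\Delta$ to the normalised simple system $\Delta^{\mathrm{norm}}$ (all coefficients on $\Gamma'$ equal to $0$, and $f(\alpha_1)$ replaced by $K$), so that $\Psi=t_w\Psi^{\mathrm{norm}}$, where $\Psi^{\mathrm{norm}}$ is the root system of $\Delta^{\mathrm{norm}}$.

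It remains to identify $\Psi^{\mathrm{norm}}$ in each case. If $\Gamma$ is linearly independent then $\Delta^{\mathrm{norm}}=\Gamma'$ and $\Psi^{\mathrm{norm}}=\Sigma\subseteq V$; applying $t_w$ and using $\mpair{\beta,w}=r_\beta$ gives $\Psi=\mset{\beta+r_\beta\delta\mid\beta\in\Sigma}$, which matches the claim as $K=0$. If $\Gamma$ is linearly dependent then compatibility forces $K\neq 0$ (Lemma \ref{lem4}), so the form-isometry $\phi_K$ of $\wh V$ fixing $V$ pointwise and scaling $\delta$ by $K$ is invertible. It fixes each $\gamma\in\Gamma'$ and sends $\delta-\theta$ to $\alpha_1+K\delta$, hence carries the simple system $\Gamma'\cup\set{\delta-\theta}$ of Lemma \ref{lem:affrootcoeff} to $\Delta^{\mathrm{norm}}$; since $\phi_K$ is an isometry it carries root systems to root systems, so Lemma \ref{lem:affrootcoeff}(c) yields $\Psi^{\mathrm{norm}}=\phi_K\mset{\beta+n k_{\beta,\Gamma}\delta\mid\beta\in\Sigma,n\in\Int}=\mset{\beta+mK k_{\beta,\Gamma}\delta\mid\beta\in\Sigma,m\in\Int}$. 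Applying $t_w$ and again using $\mpair{\beta,w}=r_\beta$ gives $\Psi=\mset{\beta+(r_\beta+mK k_{\beta,\Gamma})\delta\mid\beta\in\Sigma,m\in\Int}$, as required.

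The only genuine subtleties I anticipate are two. First, the orthogonal decomposition of $\Psi$ into components matching those of $\Delta$, which I would take from the based-root-datum formalism of \S\ref{ss:based root datum} rather than re-derive. Second, verifying that $t_{-w}$ lands precisely on $\alpha_1+K\delta$; this is exactly where the identity $K=f(\alpha_1)+r_\theta$ does the work, and it is the one computation I would check carefully. All root-length and duality bookkeeping — the factor $k_{\beta,\Gamma}$, and whether $\alpha_1$ is the negative of the highest root or of the highest coroot — is already absorbed into Lemma \ref{lem:affrootcoeff}(c), so no new length analysis is needed here.
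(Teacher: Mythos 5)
Your proof is correct and follows essentially the same route as the paper: reduce to indecomposable $\Gamma$ and transport the normalised root system of Lemma \ref{lem:affrootcoeff}(c) to $\Psi$ via an isometry carrying $\Delta'$ to $\Delta$ — the paper's single basis-to-basis isometry $i$ is exactly your composite $t_w\circ\phi_K$. Your factorisation into a translation and a $\delta$-dilation merely replaces the paper's expansion of roots in the simple basis (Lemma \ref{lem:affrootcoeff}(a)) by a direct computation of images, hinging on the same identity $K_i=f(\alpha_i)+r_{-\alpha_i}$.
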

\begin{remark} The above  result applies in particular  in the case that
  $\Delta\subseteq \Phi^a_{+}$, when $\Psi=\Phi^a(\Gamma,f)$. In that case,  
  we have $r_{\beta}\geq 0$ for 
all $\beta\in \Sigma$ which are positive with respect to $\Gamma'$. If, further, 
 $\Gamma_{i}'\neq \Gamma_{i}$, then using \cite[Ch VI,  \S 1, Prop. 25]{Bour} 
 for $\Sigma$ and $\ck \Sigma $, we find that $ 0\leq r_{\beta}\leq K_{i}k_{\beta,\Gamma}$  for all $\beta\in \Sigma_{i}$
 which are positive with respect to  $\Gamma'_{i}$.
\end{remark}
\begin{proof} The proof easily reduces to the case where $\Gamma$ is indecomposable,
which we henceforth assume.
Let $\Delta'$, $\Psi'$ be as in Lemma \ref{lem:affrootcoeff}.    
There is an isometry $i\colon \real \Delta'\rightarrow \real \Delta$, mapping the basis $\Delta'$ 
of the left side to the basis $\Delta$ of the right side; more precisely,  
$\gamma\mapsto\gamma+f(\gamma)\delta$ for $\gamma\in \Gamma'$, and, if $\Gamma'\neq \Gamma$, 
 $\delta-\alpha\mapsto  f(-\alpha)\delta-\alpha$.  Clearly,  
 this isometry induces an isomorphism $W'\rightarrow W''$ of Coxeter systems and  maps $\Psi'$ bijectively to $\Psi$. 
 From the expressions for elements of $\Psi'$ as linear combinations of the simple 
 roots $\Delta'$ in Lemma \ref{lem:affrootcoeff}, 
 we obtain corresponding expressions for  elements of  
 $\Psi$ as linear combinations of  simple roots $\Delta$. One readily checks that 
 expanding the latter linear combinations  gives the desired result, using the 
 definitions of $r_{\beta}$, $K_{i}$ and $k_{\beta,\Gamma}$.\end{proof}

\subsection{Weyl data, alcoves and compatible pairs} 
For the remainder of this section, fix  $(\Gamma,f)\in \RR(\Phi,\Int)$. 
We shall show how to explicitly describe in terms of the data $(\Gamma,f)$
  the fundamental alcove of
 $W':W(\Gamma,f)$,  the volume of an alcove,  the index $[W^{a}:W']$,
and the minimal length (say, left) coset representatives 
of $W'$ in $W^{a}$. To describe these results, 
we maintain all  the notation   from the last subsection.

 It is clear that $\Phi^a(\Gamma,f)$ is finite  (equivalently, $W^{a}(\Gamma,f)$  
 is finite) if and only if $\Gamma=\Gamma'$. Also, $W^{a}(\Gamma,f)$ has only affine type  
 components (i.e.  no finite components) if and only if   $\Gamma_{i}\neq \Gamma'_{i}$ for all $i$.
Finally, $W^{a}(\Gamma,f)$ is of finite index in $W^{a}$ if and only if its translation 
subgroup is of finite index in that of $W^{a}$, which is the case if and only if 
$\Gamma_{i}\neq \Gamma'_{i}$ for all $i$ and $\real \Phi=\real \Gamma'$.      

\subsection{}
% \mar{should be coweights} 
 The fundamental coweights  
$\omega_{\alpha}:=\omega_{\alpha}(\Gamma')$ for $\alpha\in\Gamma'$ are defined
as the basis of $\R\Gamma'$ which is dual to $\Gamma'$ with respect to $\mpair{-,-}$. 
Recall  that $f_{\Gamma'}$ denotes  the index of connection of $\Phi_{\Gamma'}$
($=[\sum_{\gamma\in\Gamma'}\Z\omega_\gamma:\sum_{\gamma\in\Gamma'}\Z\gamma]$) and let  $r_{i}:=\vert \Gamma'_{i}\vert$ denote the rank of $\Phi_{\Gamma'_{i}}$.

\begin{lemma}\label{lem:alcove}
%\mar{lem:alcove}
\begin{num}
\item The closed fundamental alcove for $W^{a}(\Gamma,f)$ on $V$  is 
\[C=C(\Gamma,f):=\mset{v\in V\mid\mpair{v,\gamma}+f(\gamma)\geq 0\text{ for all $\gamma\in \Gamma$}}.\]
\item $C=\perp_{i=0}^{n} C_{{i}} $ (orthogonal direct sum of subsets) where $C_{0}$ is the subspace  $C_{0}:=\mset{v\in V\mid \mpair{v,\Gamma}=0}$ and for $i=1,\ldots, n$, 
\[C_{i}=\mset{v\in \real\Gamma'_{i} \mid\mpair{v,\gamma}+f(\gamma)\geq 0\text{ for all $\gamma\in \Gamma_{i}$}}.\]
\item  For $i=1,\ldots, n$ let $v_{i,0}:=-\sum_{\alpha\in \Gamma'_{i}}f(\alpha)\omega_\alpha$.
If $\Gamma'_{i}=\Gamma_{i}$, then $C_{i}$ is the simplicial cone
$v_{i,0}+\sum_{\alpha\in \Gamma_{i}}\real_{\geq 0} \omega_{\alpha}$ with vertex $v_{i_{0}}$,
while if $\Gamma'_{i}\neq \Gamma_{i}$,  then $C_{i}$ is the simplex with vertices 
$v_{i,0}$ and $v_{i,0}+\frac{K_{i}}{c_{\alpha}}\omega_{\alpha}$ for 
$\alpha\in \Gamma'_{i}$. 
\end{num}\end{lemma}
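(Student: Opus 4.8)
The plan is to establish all three parts together, reducing the substance of (a) and (c) to a single indecomposable component after first recording the wall structure and then proving the orthogonal decomposition (b) by linear algebra. The one genuinely new input is the identification of the walls of $C$; once that is in place everything else is bookkeeping in a suitable basis together with the known structure of a single finite or affine Weyl group.

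First I would identify the walls and settle (a). By Proposition \ref{prop:afflinear}(a), the reflection $s_{\gamma+f(\gamma)\delta}$ attached to a simple root $\gamma+f(\gamma)\delta\in\Delta_{W'}$ acts on $V$ as the affine reflection $s_{\gamma,-f(\gamma)}$, whose reflecting hyperplane in $V$ is $\mset{v\in V\mid\mpair{v,\gamma}=-f(\gamma)}$. Since $\Delta_{W'}\subseteq\Phi^a_{+}$ is the canonical simple system of $W'$, the based root datum formalism of \S\ref{ss:based root datum}, together with \cite{DyRef} and \cite{Semi}, identifies the closed fundamental alcove with the intersection of the dominant half-spaces $\mpair{v,\gamma}+f(\gamma)\geq 0$; transporting along the $\wt W^a$-equivariant affine isomorphism $i\colon V\to E$ of the proof of Proposition \ref{prop:afflinear} turns this abstract dominant chamber into the explicit set $C(\Gamma,f)$, the sign of each half-space being forced by positivity of $\Delta_{W'}$.

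For (b) I would argue set-theoretically from the definition of $C$. The components $\Gamma_i$ of the \nip $\Gamma$ are mutually orthogonal and $\real\Gamma_i=\real\Gamma'_i$, so $\real\Gamma=\real\Gamma'=\perp_i\real\Gamma'_i$ and $V=C_0\perp(\perp_i\real\Gamma'_i)$ with $C_0=(\real\Gamma)^\perp$. For $\gamma\in\Gamma_i$ the quantity $\mpair{v,\gamma}$ depends only on the $\real\Gamma'_i$-component of $v$, so the defining inequalities of $C$ split across the orthogonal summands; a point lies in $C$ exactly when each $\real\Gamma'_i$-component lies in $C_i$ and the $C_0$-component is unconstrained (here one uses $f(\gamma)\geq 0$ to see $C_0\subseteq C$). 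This gives $C=\perp_{i=0}^{n}C_i$ and reduces (a) and (c) to the indecomposable factors.

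Finally, for (c) I would fix $i$ and pass to the dual basis: writing $v=v_{i,0}+\sum_{\alpha\in\Gamma'_i}t_\alpha\omega_\alpha$ with $v_{i,0}=-\sum_{\alpha\in\Gamma'_i}f(\alpha)\omega_\alpha$, the relation $\mpair{\omega_\alpha,\gamma}=\delta_{\alpha\gamma}$ gives $\mpair{v,\gamma}+f(\gamma)=t_\gamma$ for $\gamma\in\Gamma'_i$. When $\Gamma'_i=\Gamma_i$ the inequalities reduce to $t_\gamma\geq 0$, so $C_i$ is the simplicial cone with vertex $v_{i,0}$ and edge directions $\omega_\alpha$. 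When $\Gamma'_i\neq\Gamma_i$, substituting the relation $\alpha_i=-\sum_{\gamma\in\Gamma'_i}c_\gamma\gamma$ (normalised by $c_{\alpha_i}=1$) yields $\mpair{v,\alpha_i}+f(\alpha_i)=K_i-\sum_{\gamma\in\Gamma'_i}c_\gamma t_\gamma$, so the extra inequality is $\sum_\gamma c_\gamma t_\gamma\leq K_i$; in the coordinates $t_\gamma\geq 0$ this is the simplex with vertices $t=0$ and $t_\alpha=K_i/c_\alpha$ (others zero), i.e. $v_{i,0}$ and $v_{i,0}+\frac{K_i}{c_\alpha}\omega_\alpha$. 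Reassembling the factors via (b), each is recognised through Proposition \ref{prop:affinewequiv} as the chamber of a finite Weyl group or the alcove of an affine Weyl group, which re-confirms that $C$ is the fundamental alcove.

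The main obstacle I anticipate is part (a): one must verify that $C$ is genuinely a fundamental domain whose interior is a single alcove and that the dominant side is the correct one, which requires care in transporting the Coxeter-theoretic statement across $i$ and in handling the decomposable case, where the simple roots of $W'$ are only positively, not linearly, independent. The translation bookkeeping encoded by the shift $v_{i,0}$ relative to the standard alcove of Proposition \ref{prop:affinewequiv}(c) is the other place where sign and normalisation slips are easy to make.
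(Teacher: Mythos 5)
Your route is the paper's route for all three parts. Parts (b) and (c) are complete and match the paper's argument exactly: the orthogonal splitting of the defining inequalities across the components $\Gamma_i$, and the coordinate computation $\mpair{v,\gamma}+f(\gamma)=t_\gamma$ in the basis of fundamental coweights centred at $v_{i,0}$, together with $\mpair{v,\alpha_i}+f(\alpha_i)=K_i-\sum_{\gamma\in\Gamma'_i}c_\gamma t_\gamma$, are precisely how the paper identifies the simplicial cone and the simplex with the stated vertices (the paper phrases this as the verification that $v_{i,0}$ lies on all walls indexed by $\Gamma'_i$ and that $\mpair{v_{i,0},\alpha_i}+f(\alpha_i)=K_i>0$).

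The one genuine gap is the step you yourself flag in part (a) and then leave unresolved: knowing that $C$ is cut out by the dominant half-spaces for the canonical simple system does not by itself make $C$ a fundamental domain for $W^{a}(\Gamma,f)$ acting on all of $V$. For a general Coxeter group in its linear realisation, the closed fundamental chamber is a fundamental domain only for the action on the Tits cone, and for a proper reflection subgroup such as $W^{a}(\Gamma,f)$ (which may even be finite) there is no a priori reason its chamber translates should cover $i(V)$. The paper closes this with a short but essential observation: the Tits cone of $W^{a}(\Gamma,f)$ on $\wh V^{*}$ is defined by requiring $\phi(\beta)<0$ for only finitely many $\beta\in\Phi^{a}(\Gamma,f)\cap\Phi^a_{+}$, and since this is a condition over a \emph{subset} of $\Phi^a_{+}$ it is implied by the corresponding condition for $W^{a}$; hence the Tits cone of the subgroup contains that of $W^{a}$, which in turn contains the affine hyperplane $i(V)$. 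Since $i(V)$ is stable under the relevant group, restricting the fundamental-domain property from the Tits cone to $i(V)$ yields (a). You should supply this containment (or some equivalent covering argument, e.g.\ carrying out in full your closing remark that each $C_i$ is the chamber or alcove of the finite or affine Weyl group generated by the reflections in $\Gamma_i$ acting on $\real\Gamma'_i$); without it the claim that $C$ is \emph{the} closed fundamental alcove on $V$ is not established.
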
 
\begin{proof}
%\mar{please check--some changes}
We refer to \cite[Proposition 3.12]{Kac}, \cite[Ch V, \S 4, no. 4--6]{Bour},
\cite[5.6]{Mo} and  \cite{Semi} for basic facts concerning
fundamental chambers and Tits cones used below. The closed fundamental 
chamber for $W^{a}(\Gamma,f)$ acting on $\wh V^{*}$  is
\bee 
\wh C:=\mset{\phi \in \wh V^{*}\mid \phi(\alpha+f(\alpha)\delta))\geq 0
\text{ \rm for all $\alpha\in \Gamma$}}
\eee  
and the corresponding Tits cone is 
\bee \wh X:=\mset{\phi\in \wh V^{*}\mid \phi(\beta)<0\text{ \rm for at most finitely many $\beta\in \Phi^{a}(\Gamma,f)\cap \Phi_{+}$}}.
\eee
 Clearly, $\wh X$   contains the Tits cone  of $W^{a}$ acting on $\wh V^{*}$, which is 
\bee \mset{\phi \in \wh V^{*}\mid\phi(\beta)<0\text{ \rm for at most finitely 
many $\beta\in  \Phi^a_{+}$}}\supseteq i(V),
\eee where $i:V\to\wh V^*$ is as in 
the proof of Proposition \ref{prop:afflinear}.   
Since $\wh C$ is a fundamental domain for $W^{a}(\Gamma,f)$ on $\wh X$ and
$i(V)$ is $W$-stable, it follows that $C$ is a fundamental domain 
for $W^{a}(\Gamma,f)$ acting on $V$. The reflections in the walls of $C$ 
correspond to the reflections in the walls of 
$\wh C$, which are the canonical Coxeter generators of $W^{a}(\Gamma,f)$, and (a) follows.

Part (b) follows readily from (a) and the definitions.

 For (c),  note first that $C_{i}$ is a simplicial cone or simplex, depending on
 whether $\Gamma'_{i}=\Gamma_{i}$ or $\Gamma'_{i}\neq \Gamma_{i}$. In either case,   
 one verifies first that $v_{i,0}$ 
lies on all walls of $C_{i}$ of the form 
$\mpair{v\in \real \Gamma_{i}\mid \mpair{v,\gamma}+f(\gamma)=0}$ for $\gamma\in \Gamma'_{i}$. 
If $\Gamma'_{i}=\Gamma_{i}$ then the elements $\omega_{\alpha}$ are parallel to the extreme rays  of $C_{i}$
and the assertion of (c) holds. If 
 $\Gamma'_{i}\neq \Gamma_{i}$, then one checks that 
 % \mar{eq:vertexcheck}
\be \label{eq:vertexcheck} \mpair{v_{i,0},\alpha_{i}}+f(\alpha_{i})=K_{i}\ee 
so  that  $v_{i_{0}}\in C_{i}$ since $K_{i}>0$. Then for $\alpha \in \Gamma'_{i}$, 
one sees from \eqref{eq:vertexcheck}  that  the point
 $v_{i,0}+\frac{K_{i}}{c_{\alpha}}\omega_{\alpha}$  lies on all faces of $C_{i}$ except the
 face $\mpair{v\in \real \Gamma_{i}\mid \mpair{v,\alpha}+f(\alpha)=0}$, 
 so it is a vertex of $C_{i}$.
 The result (c) follows.
 \end{proof}
\subsection{Volume}  We maintain the notation of the above section and in 
addition,  consider Lebesgue measure $\mu$ on $V$, normalised so that the measure 
of an $m$-dimensional  hypercube in $V$ with sides of unit length 
(using the metric arising from $\mpair{-,-}$) is $1$, where $m=\text{\rm dim}(V)$.
Recall that $(\Gamma,f)$ is a fixed element of $\RR(\Phi,\Z)$, which defines a 
root subsystem of $\Phi^a$ as in Proposition \ref{prop3}. By Lemma \ref{lem:alcove}
the chamber $C$ is 
a fundamental domain for the action of $W(\Phi^a(\Gamma,f)$ on $V$.

\begin{corollary}\label{cor:alcvol}
%\mar{alcvol}
\begin{num}
\item If the   measure $\mu(C)$ is finite, then 
\[\mu(C)=\sqrt{ \frac{1}{f_{\Gamma'}}\prod_{\beta\in \Gamma'}
\frac{2}{\mpair{\beta,\beta}}}\, \prod_{i}
\frac{K_{i}^{r_{i}}}{r_{i}!\prod_{\alpha\in \Gamma_{i}'}c_{\alpha}
}\]
\item $\mu(C)$ is finite if and only if the index $[W^{a}:W^{a}(\Gamma,f)]$ is finite.
\item  Suppose also $(\Gamma',f')\in \RR(\Phi,\Int)$ with
$W^{a}(\Gamma,f)\subseteq W^{a}(\Gamma',f')$  and that both  $\mu(C(\gamma,f))$
and $\mu(C(\Gamma',f'))$ are finite. Then    
\bee[W^{a}(\Gamma',f'):W^{a}(\Gamma,f)]=\frac{\mu(C(\Gamma,f))}{\mu(C(\Gamma',f'))}.\eee  
 \end{num}
\end{corollary}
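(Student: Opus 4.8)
The plan is to handle the three parts in the order (b), (a), (c), since the explicit shape of $C$ furnished by Lemma \ref{lem:alcove} drives all of them. For (b), by Lemma \ref{lem:alcove}(b) we have the orthogonal decomposition $C=C_0\perp C_1\perp\cdots\perp C_n$, in which $C_0=\mset{v\in V\mid\mpair{v,\Gamma}=0}$ is a linear subspace of dimension $\dim V-\dim\real\Gamma'$ and, by part (c) of that Lemma, each $C_i$ ($i\geq 1$) is a simplicial cone when $\Gamma_i=\Gamma_i'$ and a bounded simplex when $\Gamma_i\neq\Gamma_i'$. Since Lebesgue measure factors as a product over an orthogonal direct sum, $\mu(C)$ is finite if and only if every factor has finite measure; a positive-dimensional subspace and a positive-dimensional simplicial cone each have infinite measure, whereas a simplex has finite measure. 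Hence $\mu(C)<\infty$ exactly when $C_0=\set{0}$ and $\Gamma_i\neq\Gamma_i'$ for all $i$, i.e. when $\real\Phi=\real\Gamma'$ and $\Gamma_i\neq\Gamma_i'$ for all $i$. This is precisely the criterion for finiteness of $[W^a:W^a(\Gamma,f)]$ recorded above, which gives (b).

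For (a), assume the finiteness conditions of (b), so that $\mu(C)=\prod_{i=1}^n\mu(C_i)$ and, by Lemma \ref{lem:alcove}(c), $C_i$ is the simplex with vertex $v_{i,0}$ and edge vectors $w_\alpha:=\frac{K_i}{c_\alpha}\omega_\alpha$ for $\alpha\in\Gamma_i'$. The $r_i$-dimensional volume of this simplex equals $\frac{1}{r_i!}$ times the volume of the parallelepiped spanned by the $w_\alpha$, namely $\frac{1}{r_i!}\sqrt{\det(\mpair{w_\alpha,w_\beta})_{\alpha,\beta\in\Gamma_i'}}$; extracting the scalars $\frac{K_i}{c_\alpha}$ from the Gram determinant yields
\[
\mu(C_i)=\frac{1}{r_i!}\cdot\frac{K_i^{r_i}}{\prod_{\alpha\in\Gamma_i'}c_\alpha}\cdot\sqrt{\det(\mpair{\omega_\alpha,\omega_\beta})_{\alpha,\beta\in\Gamma_i'}}.
\]
The remaining point is to evaluate the coweight Gram determinant. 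Writing $A_i=(\mpair{\alpha,\beta})_{\alpha,\beta\in\Gamma_i'}$ for the Gram matrix of the simple roots and using the defining relations $\mpair{\gamma,\omega_\alpha}=\delta_{\gamma\alpha}$, one checks that the matrix $(\mpair{\omega_\alpha,\omega_\beta})$ is $A_i^{-1}$, so its determinant is $1/\det A_i$. Factoring the Cartan matrix as $(\mpair{\alpha,\ck\beta})_{\alpha,\beta\in\Gamma_i'}=A_iD_i$, where $D_i$ is the diagonal matrix with entries $\frac{2}{\mpair{\beta,\beta}}$, and invoking \eqref{eq:cartdet} gives $\det A_i=f_{\Gamma_i'}\big/\prod_{\beta\in\Gamma_i'}\frac{2}{\mpair{\beta,\beta}}$. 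Substituting this and then multiplying over $i$, and using that both the index of connection and the product $\prod\frac{2}{\mpair{\beta,\beta}}$ are multiplicative over the components $\Gamma_i'$ of $\Gamma'$, collapses the $n$ radicals into the single factor $\sqrt{\frac{1}{f_{\Gamma'}}\prod_{\beta\in\Gamma'}\frac{2}{\mpair{\beta,\beta}}}$ and produces the stated formula.

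For (c), the argument is the standard identity equating a group index with a ratio of fundamental-domain measures. Put $G=W^a(\Gamma',f')$ and $H=W^a(\Gamma,f)\subseteq G$, and let $m=[G:H]$. Every element of $W^a$ acts on $V$ as an affine isometry and so preserves $\mu$. By Lemma \ref{lem:alcove}(a), $C(\Gamma',f')$ is a fundamental domain for $G$ acting on $V$; writing $G=\coprod_{j=1}^m Hg_j$ and using $V=\coprod_{g\in G}gC(\Gamma',f')$ (equality up to the $\mu$-null union of reflecting hyperplanes) shows that $\coprod_{j=1}^m g_jC(\Gamma',f')$ is a fundamental domain for $H$. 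By $G$-invariance of $\mu$ its measure is $m\,\mu(C(\Gamma',f'))$; on the other hand $C(\Gamma,f)$ is also a fundamental domain for $H$, and any two measurable fundamental domains for the same measure-preserving action have equal measure, so $\mu(C(\Gamma,f))=m\,\mu(C(\Gamma',f'))$, which is (c).

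I expect the determinant bookkeeping in (a) to be the only real obstacle: one must verify carefully that the coweight Gram matrix is the inverse of the simple-root Gram matrix and keep precise track of the diagonal root-length factors, so that \eqref{eq:cartdet} delivers exactly $f_{\Gamma'}$ alongside the product $\prod_{\beta\in\Gamma'}\frac{2}{\mpair{\beta,\beta}}$. The measure-theoretic steps in (b) and (c) are routine once one observes that the relevant boundaries are contained in the $\mu$-null union of reflecting hyperplanes.
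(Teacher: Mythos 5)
Your proof is correct and follows essentially the same route as the paper: decompose $C$ orthogonally via Lemma \ref{lem:alcove}, compute each simplex volume as $\frac{1}{r_i!}$ times a scaled coweight parallelepiped whose Gram determinant is evaluated through the Cartan determinant \eqref{eq:cartdet}, and derive (b) and (c) from the covolume--index relation (for which the paper simply cites Bourbaki, Ch.~VI, \S 2, Lemma 1, while you spell out the standard argument).
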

\begin{remark} \label{rem7}
%\mar{rem7} 
Since $W^{a}=W^{a}(\Gamma_{0},f_{0})$, the measure $\mu(A)$ of the 
fundamental alcove of $W^{a}$ is finite and may be computed as a special 
case of the formula in (a) \end{remark}
\begin{proof} If the measure $\mu(C)$ is finite, then $\real\Phi=\real\Gamma$ and for all $i$,
$\Gamma'_{i}\neq \Gamma_{i}$.  We assume we are in this situation. 
Let $\mu_{i}$ denote Lebesgue measure on $\real \Gamma_{i}$, 
normalised in a similar way to $\mu$. We have 
$\mu(C)=\prod_{i}\mu(C_{i})$ since the $C_{i}$ are pairwise orthogonal.
Consider the full-dimensional parallelepiped $P_{i}$ in $\real \Gamma_{i}$ 
with $0$ as one vertex and with the edges incident with 
$0$ given by endpoints $\omega_{\alpha}$ for $\alpha\in \Gamma_i'$.  
By  the change of variables formula in multiple integrals, its volume is 
\[\mu_{i}(P_{i})=\sqrt{\det(\mpair{\omega_{\alpha},\omega_{\beta}})_{\alpha,\beta\in \Gamma'_{i}} }\]
(this is also a well-known formula of Hadamard).
By expressing the elements of $\Gamma'$ and the corresponding coroots and fundamental  coweights in 
terms of an orthonormal basis of $\real \Gamma_{i}$ and using \eqref{eq:cartdet},  
one checks that the determinant under the square root sign is  equal to  
\[{\frac{1}{f_{\Gamma'_{i}}}\prod_{\beta\in \Gamma_{i}'}\frac{2}{\mpair{\beta,\beta}}}\]
On the other hand,  it is easy to see using Lemma \ref{lem:alcove}(c) and  the change 
of variable formula   that
\[\mu_{i}(C_{i})=\mu_{i}(P_{i})\prod_{i}\frac{K_{i}^{n_{i}}}{n_{i}!
\prod_{\alpha\in \Gamma_{i}'}c_{\alpha}
}.
\] 
Combining the above facts, using $f_{\Gamma'}=\prod_{i}f_{\Gamma'_{i}}$,  
gives (a). Parts (b) and (c) follow readily from \cite[Ch VI, \S 2, Lemma 1]{Bour}.
\end{proof}

\subsection{} From \cite{DyRef}, the set 
$\mset{w\in W^{a}\mid w\Delta(\Gamma,f)\subseteq \Phi^a_{+}}$ is known to be 
the set of minimal length left coset representatives
of $W^{a}(\Gamma,f)$ in   $W^{a}$ (where length is with
respect to the standard length function of $(W^{a},S^{a})$). 
The next result explicitly describes a set of left coset 
representatives of $W^{a}(\Gamma,f)$ in $\wt W=W\ltimes \tau_{R}$ (with $R$ as in \ref{genextaff}), reducing 
to the above set if $\wt W=W^{a}$
i.e. if $ R=Q$.
\begin{proposition} \label{prop:cosreps}
%\mar{prop:cosreps}
 Let $G:=\mset{w\in \wt W\mid w\Delta(\Gamma,f)\subseteq \Phi^a_{+}}$\begin{num} 
 \item For $w\in W$ and $\gamma\in  R$, we have $wt_{\gamma}\in G$ if
 and only if for all  $\alpha\in \Gamma$, 
\begin{equation*}\begin{cases}  \mpair{\alpha,\gamma}\geq -f(\alpha),&
\text{if  $w(\alpha)\in \Phi_{+}$}\\
\mpair{\alpha,\gamma}\geq 1-f(\alpha),&\text{if  $w(\alpha)\in \Phi_{-}$}
 \end{cases} \end{equation*}
 \item For $x\in \wt W$, we have $x\in G$ if and only if $x^{-1}(A)\subseteq C$.
 Further,  we have $C=\cup_{x\in G}\, x^{-1}(A)$. 
\item $G\cong \wt W/W^{a}(\Gamma,f)$ is a set of left coset representatives
of $W^{a}(\Gamma,f)$ in   $\wt W$. 
\item  $\vert G\vert =[W^{a}:W^{a}(\Gamma,f)] \cdot [R:Q]$.
\end{num}
  \end{proposition}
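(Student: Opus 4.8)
The plan is to establish (a)--(d) in turn, treating (a) as a direct computation and basing (b)--(d) on the alcove geometry set up in Lemma~\ref{lem:alcove} together with the linear/affine dictionary of Proposition~\ref{prop:afflinear}. For (a), I would use that every element of $\wt W=W\ltimes\tau_{R}$ is uniquely of the form $wt_{\gamma}$ with $w\in W$ and $\gamma\in R$, and apply \eqref{eq5} to each simple root $\alpha+f(\alpha)\delta\in\Delta(\Gamma,f)$:
\[ wt_{\gamma}(\alpha+f(\alpha)\delta)=w(\alpha)+\bigl(f(\alpha)+\mpair{\alpha,\gamma}\bigr)\delta. \]
Since $\mpair{\alpha,\gamma}\in\Int$ (as $\gamma\in R\subseteq P$), the definition of $\Phi^a_{+}$ in \eqref{eq:phia} says this lies in $\Phi^a_{+}$ exactly when $f(\alpha)+\mpair{\alpha,\gamma}\geq 0$ if $w(\alpha)\in\Phi_{+}$, and when $f(\alpha)+\mpair{\alpha,\gamma}\geq 1$ if $w(\alpha)\in\Phi_{-}$. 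Requiring $wt_{\gamma}\Delta(\Gamma,f)\subseteq\Phi^a_{+}$ means imposing this for every $\alpha\in\Gamma$, which is precisely the displayed system; this proves (a).

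For the first equivalence in (b), I would record that the map $i\colon V\to E\subseteq\wh V^{*}$ of Proposition~\ref{prop:afflinear} is $\wt W$-equivariant (by \eqref{eq6}, \eqref{eq7}), that $A=\mset{v\mid i(v)(\beta)\geq 0\text{ for all }\beta\in\Phi^a_{+}}$ (the defining inequalities of $A$ are those for $\wh\Pi$, and every positive root lies in $\real_{\geq 0}\wh\Pi$), and that $C=\mset{v\mid i(v)(\beta)\geq 0\text{ for all }\beta\in\Delta(\Gamma,f)}$ by Lemma~\ref{lem:alcove}(a), since $i(v)(\gamma+f(\gamma)\delta)=\mpair{v,\gamma}+f(\gamma)$. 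The key identity is $i(v)(\beta')=i(xv)(x\beta')$ for $x\in\wt W$. If $x\Delta(\Gamma,f)\subseteq\Phi^a_{+}$ and $v\in x^{-1}(A)$, then $i(v)(\beta')=i(xv)(x\beta')\geq 0$ for each $\beta'\in\Delta(\Gamma,f)$, so $v\in C$. Conversely, if some $x\beta'\notin\Phi^a_{+}$ then $-x\beta'\in\Phi^a_{+}$ (as $\Phi^a$ is $\wt W$-stable and $\Phi^a=\Phi^a_{+}\,\dot\cup\,-\Phi^a_{+}$), and any $v$ in the open alcove $x^{-1}(A^{0})$ has $i(xv)(-x\beta')>0$, i.e. $i(v)(\beta')<0$, so $x^{-1}(A)\not\subseteq C$. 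This gives $x\in G\iff x^{-1}(A)\subseteq C$.

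The remaining statements rest on three geometric facts: $W^{a}$ acts simply transitively on the (open) alcoves; hence $W':=W^{a}(\Gamma,f)\leq W^{a}$ acts \emph{freely} on alcoves; and $C$, being a fundamental domain for $W'$ whose walls are reflection hyperplanes of $W'$, is a union of whole alcoves meeting each $W'$-orbit of alcoves exactly once. For ``$C=\cup_{x\in G}x^{-1}(A)$'', each alcove $B\subseteq C$ equals $x^{-1}(A)$ for a unique $x\in W^{a}\subseteq\wt W$, and $x\in G$ by the first part of (b); conversely every such $x^{-1}(A)$ lies in $C$, so the union fills $C$. For (c), given a left coset $xW'$ I would track $\mset{(xu)^{-1}(A)=u^{-1}x^{-1}(A)\mid u\in W'}$, which is the full $W'$-orbit of the alcove $x^{-1}(A)$; by the fundamental-domain property it contains a unique alcove inside $C$, realized by a unique $u\in W'$ by freeness, so $xW'$ meets $G$ in exactly one point, whence $G$ is a transversal for $\wt W/W'$. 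Finally (d) follows from (c) and the tower law, $\vert G\vert=[\wt W:W']=[\wt W:W^{a}]\,[W^{a}:W']=[R:Q]\,[W^{a}:W^{a}(\Gamma,f)]$, using $[\wt W:W^{a}]=[R:Q]$ from \S\ref{genextaff} (valid as cardinal arithmetic even for infinite indices).

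The step I expect to be the main obstacle is the bookkeeping in (b)--(c): one must pass from the Coxeter group $W^{a}$ to the non-Coxeter extension $\wt W$, whose elements fixing $A$ form the diagram-automorphism group $\Omega\cong R/Q$, and be careful to invoke freeness only for $W'$ acting on alcoves (valid since $W'\leq W^{a}$) while using simple transitivity of $W^{a}$ for the covering. Making rigorous the claim that $C$ is a union of alcoves meeting each $W'$-orbit exactly once -- rather than the formal sign manipulations -- is where the real content lies.
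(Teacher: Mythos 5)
Your proposal is correct and follows essentially the route the paper takes: part (a) is the same direct computation with \eqref{eq5} and the definition of $\Phi^a_{+}$, and parts (b)--(d) rest on the same standard alcove-geometric facts (simple transitivity of $W^{a}$ on alcoves, $C$ being a union of closed alcoves meeting each $W^{a}(\Gamma,f)$-orbit of alcoves exactly once). The only cosmetic difference is that the paper first splits $\wt W=W^{a}\rtimes\mset{w\in\wt W\mid w(A)=A}$ with the second factor isomorphic to $R/Q$, so as to reduce (b)--(d) to the case $\wt W=W^{a}$, whereas you run the argument directly in $\wt W$; the content is the same.
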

  \begin{proof}   Part (a) follows easily  from the definition 
  of $\Phi^a(\Gamma,f)$ and \eqref{eq5}. Note that 
   $W^{a}$ acts simply transitively on the set of its alcoves 
   and $\wt W=W\ltimes \tau_{R}$ acts on the set of alcoves. It follows 
   that $\wt W$ is the semi-direct product of its normal subgroup $W^{a}$
   with the subgroup 
   \[\mset{w\in \wt W\mid w(A)=A}=\mset{w\in \wt W\mid w(\wt \Pi)=\wt \Pi}\cong R/Q,
   \]and this reduces the proofs  of (b)--(d) to the case $\wt W=W^{a}$ where 
   the statements  are standard or trivial.\end{proof}

 %%%%%%%%%%%%%%%%%%%%%%%%%%%%%%%%%%HERE%%%%%%%%%%%%%%%%%%%%%%%%%%%%%%%%%%%%%%%%%%%%%%%

 \section{Subsystems  of affine root systems II} \label{s5}
 %\mar{s5}
 In the previous section, root subsystems of $\Phi^a$ were classified using the concept of 
 negative product (np) set, which arises naturally in considering simple subsystems of affine root
 systems. In this section we give an alternative classification in terms of collections
 of subsets of $\Z$.
 
   \subsection{}  For subsets $A$, $B$ of any abelian group,
   define $A\pm B:=\mset{a\pm b\mid a\in A,b\in B}$ and 
   $nA=\mset{na\mid a\in A}$ for $n\in \Int$.
   Note that $2A\neq A+A$ in general.

   Any subset $\Psi$  of $\Phi^{a}$ may be described as $\Psi:=\mset{\alpha+n\delta\mid {\alpha\in \Phi}, n\in Z_{\alpha}}$ where, 
   for each $\alpha\in \Phi$, $Z_{\alpha}$ is a subset of $\Int$.
   Given subsets $Z_{\alpha}$ for $\alpha\in\Phi$, the next lemma gives
   necessary and sufficient conditions on the $Z_{\alpha}$ in order that the
   corresponding set $\Psi$ be a root subsystem of $\Phi^a$. 
 
 \begin{lemma} \label{lem:Z} 
% \mar  {lem:Z}
   Let
 $\set{Z_{\alpha}\mid \alpha\in \Phi}$ be a family of (possibly empty) 
 subsets of $\Int$. Then $\Psi:=\mset{\alpha+n\delta\mid {\alpha\in \Phi}, n\in Z_{\alpha}}$ 
 is a root subsystem of $\Phi^a$
 if and only if, for all $\alpha,\beta\in \Phi$, we have  %\mar  {Z}
 \begin{equation}\label{Z} 
Z_{\beta}-\mpair{\beta,\ck\alpha}Z_{\alpha}\subseteq 
 Z_{s_{\alpha}(\beta)}.
 \end{equation}
\end{lemma}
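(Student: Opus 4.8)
The plan is to reduce everything to the single reflection formula \eqref{eq4} and then read off the inclusion \eqref{Z} as a direct translation of the defining closure property of a root subsystem. The point is that the $V$-component of a reflected affine root is governed entirely by $\Phi$, so all the genuine content lives in the $\delta$-coefficient, which is exactly what the sets $Z_\alpha$ record.

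First I would recall that, by the definition in \S\ref{ss1.3}, $\Psi$ is a root subsystem of $\Phi^a$ if and only if $s_\gamma(\beta')\in\Psi$ for every pair $\gamma,\beta'\in\Psi$. I would write a typical such pair as $\gamma=\alpha+n\delta$ with $\alpha\in\Phi$, $n\in Z_\alpha$, and $\beta'=\beta+m\delta$ with $\beta\in\Phi$, $m\in Z_\beta$, and apply \eqref{eq4} to obtain
\[
s_{\alpha+n\delta}(\beta+m\delta)=s_\alpha(\beta)+\bigl(m-n\mpair{\beta,\ck\alpha}\bigr)\delta.
\]
Since $\Phi$ is a root system, $s_\alpha(\beta)\in\Phi$ automatically, so this element lies in $\Phi^a$ with $V$-part $s_\alpha(\beta)$; hence it belongs to $\Psi$ precisely when its $\delta$-coefficient $m-n\mpair{\beta,\ck\alpha}$ lies in $Z_{s_\alpha(\beta)}$. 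This turns the geometric closure condition on one pair of roots into a purely arithmetic membership statement.

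Next I would quantify over the parameters. Fixing $\alpha,\beta\in\Phi$ and letting $n$ range over $Z_\alpha$ and $m$ over $Z_\beta$, the set of $\delta$-coefficients produced above is exactly $Z_\beta-\mpair{\beta,\ck\alpha}Z_\alpha$, in the set-arithmetic notation $A-B=\mset{a-b}$ and $kA=\mset{ka}$ fixed at the start of this section. Thus the requirement ``$s_\gamma(\beta')\in\Psi$ for all such $\gamma,\beta'$ with $V$-parts $\alpha,\beta$'' is equivalent to the single inclusion \eqref{Z} for this pair $(\alpha,\beta)$; letting $(\alpha,\beta)$ range over all of $\Phi\times\Phi$ yields the full equivalence. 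Because every step in this chain is an ``if and only if'', both implications of the Lemma are established at once, with no need to treat the two directions separately.

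The one place to be careful — more a bookkeeping point than a real obstacle — is the behaviour when some $Z_\alpha$ is empty. In that case no root $\alpha+n\delta$ lies in $\Psi$, so $\gamma$-reflections of that type impose no condition; correspondingly $Z_\beta-\mpair{\beta,\ck\alpha}Z_\alpha=\emptyset$ makes \eqref{Z} vacuous, so the two sides match. I would also remark that it is legitimate to let $\alpha,\beta$ range over all of $\Phi$ rather than only over roots actually occurring in $\Psi$, again because the empty cases contribute nothing on either side; this is what lets the hypothesis and conclusion be stated uniformly over the whole family $\set{Z_\alpha\mid\alpha\in\Phi}$.
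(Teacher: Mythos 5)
Your proposal is correct and follows exactly the route the paper takes: its proof of Lemma~\ref{lem:Z} is the one-line observation that the claim ``follows directly from \eqref{eq4} and the definition of a root subsystem,'' which is precisely the reduction you carry out in detail (including the harmless empty-set cases). Nothing further is needed.
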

 \begin{proof} 
 This  follows directly from \eqref{eq4} and the definition of a root subsystem.
 \end{proof}
 
 \subsection{} The root subsystems of $\Phi^a$ may be  described in terms 
 of solutions of \eqref{Z} as follows.
 \begin{corollary} \label{cor:Zsyst} 
% \mar{cor:Zsyst; corrected-non-empty etc}  
 The root subsystems  of  $\Phi^a$ are in bijective correspondence with pairs $(\Psi, \mset{Z_{\alpha}}_{\alpha\in \Psi})$ where $\Psi$ is a root subsystem 
 of $\Phi$ and  $\set{Z_{\alpha}}_{\alpha\in \Psi}$ is a family of non-empty subsets 
 of $\Int$ satisfying  \eqref{Z} for all $\alpha,\beta\in \Psi$. The correspondence attaches to  $(\Psi, \mset{Z_{\alpha}}_{\alpha\in \Psi})$  the root subsystem  $\Phi^a(\Psi, \mset{Z_{\alpha}}_{\alpha\in \Psi}):=
 \mset{\alpha+n\delta\mid \alpha\in \Psi, n\in Z_{\alpha}}$ of  $\Phi^a$. 
 \end{corollary}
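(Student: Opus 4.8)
The plan is to reduce the statement to Lemma \ref{lem:Z} by passing between a family $\{Z_\alpha\}_{\alpha\in\Phi}$ of (possibly empty) subsets of $\Z$ and the pair $(\Psi,\{Z_\alpha\}_{\alpha\in\Psi})$ in which $\Psi:=\{\alpha\in\Phi\mid Z_\alpha\neq\emptyset\}$ is the \emph{support} of the family. Extending a family indexed by a subset $\Psi\subseteq\Phi$ to all of $\Phi$ by declaring $Z_\alpha=\emptyset$ for $\alpha\notin\Psi$ sets up a bijection between pairs and families indexed by all of $\Phi$; under this identification the set produced by Lemma \ref{lem:Z} and the set $\Phi^a(\Psi,\{Z_\alpha\})$ of the statement literally coincide, because each root of $\Phi^a=\Phi+\Z\delta$ is uniquely of the form $\alpha+n\delta$ with $\alpha\in\Phi$ and $n\in\Z$ (as $\wh V=V\oplus\R\delta$). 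So it suffices to prove that, writing $\Psi$ for the support, condition \eqref{Z} holding for \emph{all} $\alpha,\beta\in\Phi$ is equivalent to the conjunction of (i) $\Psi$ is a root subsystem of $\Phi$ and (ii) \eqref{Z} holds for all $\alpha,\beta\in\Psi$.

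For the forward implication the key observation is that the support is automatically closed under reflection: if $\alpha,\beta\in\Psi$ then $Z_\alpha,Z_\beta\neq\emptyset$, so the left-hand side $Z_\beta-\mpair{\beta,\ck\alpha}Z_\alpha$ of \eqref{Z} is non-empty (note $\mpair{\beta,\ck\alpha}Z_\alpha=\{0\}$ is still non-empty even when the coefficient vanishes), and therefore \eqref{Z} forces $Z_{s_\alpha(\beta)}\neq\emptyset$, that is $s_\alpha(\beta)\in\Psi$. Thus $\Psi$ is a root subsystem, giving (i), and restricting \eqref{Z} to indices in $\Psi$ gives (ii).

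For the converse I would start from (i) and (ii), extend the family by empty sets, and check \eqref{Z} for every pair $\alpha,\beta\in\Phi$. When $\alpha,\beta\in\Psi$ we have $s_\alpha(\beta)\in\Psi$ and \eqref{Z} is exactly hypothesis (ii); when at least one of $Z_\alpha,Z_\beta$ is empty the left-hand side $Z_\beta-\mpair{\beta,\ck\alpha}Z_\alpha$ is empty, so \eqref{Z} holds vacuously. Lemma \ref{lem:Z} then shows the associated set is a root subsystem of $\Phi^a$.

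Finally I would verify that the two maps are mutually inverse: sending a root subsystem $\Sigma$ of $\Phi^a$ to the pair with $Z_\alpha=\{n\in\Z\mid\alpha+n\delta\in\Sigma\}$ and $\Psi=\pi(\Sigma)$, and sending a pair to $\Phi^a(\Psi,\{Z_\alpha\})$; both composites reduce to the unique-expression property of elements of $\Phi^a$ used above, together with the fact that $Z_\alpha\neq\emptyset$ exactly for $\alpha\in\Psi$. There is no serious obstacle here, since essentially all the content sits in Lemma \ref{lem:Z}. The only points needing care are the conventions for $A-B$ and $nA$ in the degenerate (empty or zero-coefficient) cases, and the non-emptiness remark that converts \eqref{Z} into closure of the support under reflections.
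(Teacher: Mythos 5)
Your argument is correct and follows the paper's own proof exactly: the paper likewise observes that the support $\{\alpha\in\Phi\mid Z_\alpha\neq\emptyset\}$ of any solution of \eqref{Z} is a root subsystem of $\Phi$, and conversely extends a family indexed by $\Psi$ to all of $\Phi$ by empty sets to reduce to Lemma \ref{lem:Z}. You merely supply the non-emptiness and vacuous-truth details that the paper leaves as ``clearly''.
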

 \begin{proof} Observe that for any solution 
 $\set{Z_{\alpha}}_{\alpha\in \Phi}$ of \eqref{Z},
 $\mset{\alpha\in \Phi\mid Z_{\alpha}\neq \emptyset}$ is clearly a 
 root subsystem $\Psi$ of $\Phi$; hence the corresponding root subsystem
 of $\Phi^a$ is of the required form. Conversely, if $\Psi$ is a subsystem of 
 $\Phi$ and $\mset{Z_{\alpha}}_{\alpha\in \Psi}$ satisfy 
 \eqref{Z} for all $\alpha,\beta\in \Psi$, a solution of \eqref{Z} is obtained by taking
 $Z_{\alpha}=\emptyset$ for $\alpha\in \Phi\setminus \Psi$. The corresponding 
 root subsystem of $\Phi^a$ is then $\Phi^a(\Psi, \mset{Z_{\alpha}}_{\alpha\in \Psi})$.
 \end{proof}

Note that under the above bijection, the root subsystems of $\Phi$ correspond to 
the case where each non-empty subset $\Z_\alpha=\set{0}$.
 
\subsection{}  The above description of root subsystems is convenient for studying
the inclusion relations in the poset of root subsystems of $\Phi^a$. In fact, 
it is obvious that 
% \mar{eq:rootinclusion} 
\bee \label{eq:rootiinclusion} 
\Phi^a(\Psi, \mset{Z_{\alpha}}_{\alpha\in \Psi})\subseteq 
\Phi^a(\Psi', \mset{Z'_{\alpha}}_{\alpha\in \Psi})
\iff\Psi\subseteq \Psi'\text{ and $Z_{\alpha}\subseteq Z'_{\alpha}$ for all $\alpha\in \Psi'$.} 
 \eee

The next lemma describes the natural action of the extended affine Weyl group on the
root subsystems of $\Phi^a$. This corresponds to the conjugation action 
of $\wt W^a$ on the set of reflection subgroups of its normal subgroup $W^a $.
The action is described for reflections in $W$, and translations, which together
generate $\wt W^a$. 
\begin{lemma}\label{lem:affZact}
% \mar  {lem:affZact}
 Let  $w\in \wt W^{a}$ and write 
 $w\bigl( \Phi^a(\Psi, \mset{Z_{\alpha}}_{\alpha\in \Psi})\bigr) =
 \Phi(\Psi',\mset{Z'_{\alpha}}_{\alpha\in \Psi'})$. 
\begin{num}\item    If $w\in W$, then $\Psi'=w(\Psi)$ and 
$Z'_{\alpha}=Z_{w^{-1}(\alpha)}$ for $\alpha\in \Psi'$.
%\mar{check-no Omega}
\item If $w=t_{\gamma}$ where $\gamma\in P(\Phi)$ (the coweight lattice), then 
$\Psi'=\Psi$ and $Z'_{\alpha}=Z_{\alpha}+\mpair{\alpha,\gamma}$ for all $\alpha\in \Psi'$.  
\end{num}    \end{lemma}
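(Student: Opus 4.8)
The plan is to reduce both parts to the explicit formulae for the linear action of $\wt W^{a}$ on $\wh V$ recorded in \S\ref{ss:linreal} and in the proof of Proposition~\ref{prop:afflinear}, and then simply to read off the effect on the parameters $\Psi$ and $\set{Z_{\alpha}}$. Note that the statement already presupposes that the image $w\bigl(\Phi^a(\Psi,\set{Z_\alpha})\bigr)$ is again a root subsystem of $\Phi^a$; this is guaranteed because $\wt W^{a}$ stabilises $\Phi^a$ by Proposition~\ref{prop:afflinear}(b). Hence the entire task is to identify $\Psi'$ and $Z'_{\alpha}$ from the image of a typical element $\alpha+n\delta$.

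For part (a), I would first recall from \S\ref{genextaff} that for $v\in V$ one has $\wh s_{v}=s_{v}\oplus\id_{\real\delta}$, so every $w\in W$ fixes $\delta$ and therefore acts by $w(\alpha+n\delta)=w(\alpha)+n\delta$ for $\alpha\in\Phi$ and $n\in\Int$. Applying this term by term gives
\[
w\bigl(\Phi^a(\Psi,\set{Z_\alpha})\bigr)=\mset{w(\alpha)+n\delta\mid \alpha\in\Psi,\ n\in Z_\alpha}.
\]
Substituting $\beta=w(\alpha)$, so that $\alpha=w^{-1}(\beta)$ and $\beta$ ranges over $w(\Psi)$, this set equals $\mset{\beta+n\delta\mid \beta\in w(\Psi),\ n\in Z_{w^{-1}(\beta)}}$. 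Comparing with the defining form of $\Phi^a(\Psi',\set{Z'_\alpha})$ yields $\Psi'=w(\Psi)$ and $Z'_{\beta}=Z_{w^{-1}(\beta)}$, as asserted.

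For part (b), I would use the defining formula $t_{\gamma}(x)=x+\mpair{x,\gamma}\delta$ of \S\ref{ss:linreal}, together with $\mpair{\delta,\gamma}=0$ (since $\gamma\in V$ and $\real\delta$ is the radical of the form), to obtain
\[
t_{\gamma}(\alpha+n\delta)=\alpha+\bigl(n+\mpair{\alpha,\gamma}\bigr)\delta,
\]
which is exactly \eqref{eq5} with $w=\id$; here $\mpair{\alpha,\gamma}\in\Int$ because $\gamma\in P(\Phi)$, so the image indeed lies in $\Phi^a$. Applying this term by term leaves the $\Phi$-component $\alpha$ untouched, so $\Psi'=\Psi$, while the set of $\delta$-coefficients attached to each $\alpha$ becomes $\mset{n+\mpair{\alpha,\gamma}\mid n\in Z_\alpha}=Z_\alpha+\mpair{\alpha,\gamma}$; hence $Z'_\alpha=Z_\alpha+\mpair{\alpha,\gamma}$.

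I expect no genuine obstacle here: both parts are direct computations once the linear action of $\wt W^{a}$ on $\wh V$ is in hand, the two substantive inputs being that $W$ fixes $\delta$ (so the $\Phi$-component transforms by $w$ while the $\delta$-component is inert) and, dually, that a translation $t_{\gamma}$ fixes the $\Phi$-component and shifts the $\delta$-component by $\mpair{\alpha,\gamma}$. The only point demanding mild attentiveness is the reindexing in part (a), where one must pass correctly from the indexing by $\alpha\in\Psi$ to the indexing by $\beta\in\Psi'$ so that the inverse $w^{-1}$ appears in the subscript of $Z'_{\beta}$.
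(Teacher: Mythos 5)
Your proof is correct and takes essentially the same route as the paper, which simply cites the definitions and formula \eqref{eq5} ($wt_{\gamma}(\alpha+m\delta)=w(\alpha)+(m+\mpair{\alpha,\gamma})\delta$); your two cases are exactly the specialisations $\gamma=0$ and $w=\id$ of that formula, carried out term by term with the correct reindexing. Nothing is missing.
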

\begin{proof} This follows directly from the definitions and \eqref{eq5}.\end{proof}

\subsection{}  
 For any indecomposable subset $\Gamma$ of $\Phi$, define another 
 subset $\Gamma^{\circ}$ of $V$ as follows: 
 \[\Gamma^{\circ}=\mset{\alpha\mid \alpha\in \Gamma_{\text{\rm short}}}\cup
\mset{\frac{1}{k_{\Gamma}} \alpha\mid \alpha\in \Gamma_{\text{\rm long}}} 
\] 
(see \ref{ss:rootlengthnotation} for the notation).
If $\Psi$ is an indecomposable root system in $V$ with simple system $\Gamma$,  
then $\Psi^\circ$ is a root system in $V$ (homothetic  to $\ck \Psi$, 
and hence isomorphic to  $\ck\Psi$) with simple system  
$\Gamma^{\circ}$, and $W_{\Psi}=W_{\Psi^{\circ}}$.
 
 \subsection{}   Fix an additive subgroup  $Z$ of $\real$. 
 Let $\Psi$ be a root subsystem of $\Phi$, with components
 $\Psi_{1},\ldots, \Psi_n$.   
 
 A subgroup $X$ of  $P_{Z}(\Psi)$ is said to be a $Z$-admissible 
 coweight lattice  for  $\Psi$  if it is of the form  
 $X=\oplus_{i=1}^nX_{i}\subseteq P({\Psi})$
where  either $X_{i}=0$, in which case we set 
$m_{i}=0$, or  $X_{i}=m_{i}P({\Psi_{i}})$ or
$X_{i}=m_{i}P({\Psi_{i}^{\circ}})$ for some  
$m_{i}\in Z\cap \real_{>0}$.%%%%%%%%%%

We shall require some facts about the elements $n_{\alpha}\in \Nat$ defined by
$\mpair{\alpha,X}=n_{\alpha}\Int$ for all $\alpha\in \Psi$.
Choose a simple system $\Delta_{i}$  for  $\Psi_{i}$ and set 
$\Delta:=\cup_{i}\Delta_{i}$.
Let $\omega_{\alpha}=\omega_{\alpha}(\Delta)$ for $\alpha\in \Delta$ 
denote the corresponding fundamental coweights. We have 
$X=\oplus_{i=1}^n\oplus_{\alpha\in \Delta_{i}}n'_{\alpha}\Int
\omega_{\alpha}$  where for $\alpha\in \Delta_{i}$,  we set
$n'_{\alpha}=m_{i}k_{\Psi_{i}}$ if  $X_{i}=m_{i}P({\Psi_{i}}^{\circ})$ 
and $\alpha$ is long in $\Psi_{i}$, and otherwise set  $n'_{\alpha}=m_{i}$.  
It follows that if $\alpha\in \Delta$, we have
$n_{\alpha}=n_{\alpha}'$. Since $n_{\alpha}=n_{w(\alpha)}$ for $w\in W_{\Psi}$, 
by the $W_{\Psi}$-invariance of $P(\Psi_{i})$ and $P(\Psi_{i}^{\circ})$, it follows that

\be\label{eq:admprod}  n_{\alpha}=m_{i}k_{i,\alpha}\ee for all $\alpha\in \Psi_{i}$
where $k_{i,\alpha}=k_{\Psi_{i}}$ if $\alpha$ is long in $\Psi_{i}$ and 
$X_{i}=m_{i}P(\Psi_{i}^{\circ})$, and $k_{i,\alpha}=1$ otherwise.

It follows also that
%\mar{eq:Xchar}  
\be \label{eq:Xchar} 
X=\mset{p\in P({\Psi})\mid \mpair{p,\alpha}\subseteq n_{\alpha}\Int
\text{ \rm  for all $\alpha\in \Psi$}}.
\ee
For the left hand side is clearly contained in the right. Conversely,
if an element $x=\sum_{\alpha\in \Delta}a_{\alpha}\omega_{\alpha}$ of $P(\Psi)$ is 
in the right hand side, then for $\alpha\in \Delta_{i}$ we have 
$a_{\alpha}=\mpair{x,\alpha}\in n_{\alpha}\Int=n_{\alpha'}\Int$ and so  $x\in X$.
We shall also require the following divisibility property, which
is a consequence of \eqref{eq:admprod}.  
\be\label{eq19}\mpair{\beta,\ck\alpha}n_{\alpha}\Int \subseteq n_{\beta}\Int
\ee 
for $\alpha,\beta\in \Psi$.  This is trivial except in the 
situation where $\alpha,\beta$ are in the same 
component $\Psi_{i}$, are not orthogonal, $X_{i}=m_{i}P({\Psi_{i}}^{\circ})$ and 
$\beta$ is long and $\alpha$ is short in $\Psi_{i}$; but then 
$\mpair{\beta,\ck\alpha}=k_{\Psi}$ by rank two considerations and 
the result remains valid.

The $Z$-admissible coroot lattice (for $\Psi$) corresponding to $X$ 
is defined to be the group 
$Y=\sum_{i}Y_{i}$ where $Y_{i}=0$ if $X_{i}=0$,  and $Y_{i}=m_{i}Q({\Psi_{i}})$
(resp., $Y_{i}=m_{i}Q(\Psi^{\circ}_{i})$ if $X_{i}=P({\Psi_{i}})$ 
(resp., $X_{i}=P({\Psi^{\circ}_{i}}))$.
This gives a natural bijective correspondence between $Z$-admissible 
coweight lattices and $Z$-admissible coroot lattices of $\Psi$.

\subsection{Second parameterisation} The next theorem gives our second 
main parameterisation of the root subsystems
of $\Phi^a$. To formulate it, let $\RR'(\Phi,Z)$ denote the set of all pairs $(\Psi,X)$  
where $\Psi$ is a root subsystem of $\Phi$ and 
$X\in P_{Z}(\Psi)/X'$ is a coset $X=a+X'$, where $a\in P_{Z}({\Psi})$,  
of some $Z$-admissible coweight lattice $X'$ of ${\Psi}$.
\begin{theorem}\label{thm:rootcoset}
% \mar  {thm:rootcoset}  
 There is a natural bijection between $\RR'(\Phi,\Int)$ and the set 
 of root subsystems of $\Phi^a$. 
The bijection attaches to $(\Psi,X)\in \RR'(\Phi,\Z)$ the root subsystem
$\Phi^a(\Psi,X):=\Phi^a(\Psi,\set{Z_{\alpha}}_{\alpha\in \Psi})$ where 
$Z_{\alpha} :=\mset{\mpair{\alpha,x}\mid x\in X}\subseteq \Int$ and    
$ \Phi^{a}(\Psi,\set{Z_{\alpha}}_{\alpha\in \Psi}):=
\mset{\alpha+n\delta\mid \alpha\in \Psi, n\in Z_{\alpha}}$.
\end{theorem}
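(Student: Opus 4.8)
The plan is to establish the bijection in Theorem~\ref{thm:rootcoset} by showing that the map $(\Psi,X)\mapsto \Phi^a(\Psi,\set{Z_\alpha}_{\alpha\in\Psi})$ with $Z_\alpha=\mset{\mpair{\alpha,x}\mid x\in X}$ lands in the set of root subsystems of $\Phi^a$, and then inverting it via the parameterisation of Corollary~\ref{cor:Zsyst}. First I would verify well-definedness: given a coset $X=a+X'$ with $X'$ a $Z$-admissible coweight lattice, each $Z_\alpha$ is the coset $\mpair{\alpha,a}+\mpair{\alpha,X'}=\mpair{\alpha,a}+n_\alpha\Int$ of $\Int$, using the notation $\mpair{\alpha,X'}=n_\alpha\Int$ from the preceding subsection. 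To apply Corollary~\ref{cor:Zsyst} I must check that these $Z_\alpha$ satisfy \eqref{Z}, i.e. $Z_\beta-\mpair{\beta,\ck\alpha}Z_\alpha\subseteq Z_{s_\alpha(\beta)}$ for all $\alpha,\beta\in\Psi$. This is where the structure of $X$ as a \emph{coset of a lattice} does the work: a typical element of the left side is $\mpair{\beta,a+y}-\mpair{\beta,\ck\alpha}\mpair{\alpha,a+y'}$ for $y,y'\in X'$, and since $s_\alpha(\beta)=\beta-\mpair{\beta,\ck\alpha}\alpha$ one computes $\mpair{s_\alpha(\beta),a+z}$ for suitable $z\in X'$; the discrepancy lives in $\mpair{\beta,\ck\alpha}\,\mpair{\alpha,X'}=\mpair{\beta,\ck\alpha}n_\alpha\Int$, which by the divisibility property \eqref{eq19} is contained in $n_{s_\alpha(\beta)}\Int=\mpair{s_\alpha(\beta),X'}$. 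So \eqref{Z} holds, and $\Phi^a(\Psi,X)$ is genuinely a root subsystem.

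Next I would construct the inverse. Starting from a root subsystem of $\Phi^a$, Corollary~\ref{cor:Zsyst} presents it as $(\Psi,\set{Z_\alpha}_{\alpha\in\Psi})$ with each $Z_\alpha$ a nonempty subset of $\Int$ satisfying \eqref{Z}. The task is to show that such a family necessarily arises from a coset of a $Z$-admissible coweight lattice, and that $(\Psi,X)$ is uniquely recovered. Taking $\alpha=\beta$ in \eqref{Z} gives $Z_\alpha-2Z_\alpha\subseteq Z_{s_\alpha(\alpha)}=Z_{-\alpha}$, and combining with the analogous relation for $-\alpha$ one sees that each $Z_\alpha$ is a coset of a subgroup $n_\alpha\Int$ of $\Int$; indeed differences $Z_\alpha-Z_\alpha$ form a subgroup and \eqref{Z} forces $Z_\alpha$ to be a single coset of it. I would then define $X':=\mset{p\in P(\Psi)\mid \mpair{p,\alpha}\in n_\alpha\Int \text{ for all }\alpha\in\Psi}$ and use \eqref{eq:Xchar} to identify $X'$ as exactly the $Z$-admissible coweight lattice with the given invariants $n_\alpha$. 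The integers $n_\alpha$ satisfy \eqref{eq:admprod} across each component, reflecting the two possibilities $m_iP(\Psi_i)$ or $m_iP(\Psi_i^\circ)$ dictated by whether the "long root scaling" $k_{\Psi_i}$ appears; the content here is matching the combinatorics of the $n_\alpha$ to the admissibility data $(m_i, X_i)$.

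I expect the main obstacle to be the surjectivity/recovery step: showing that an arbitrary solution $\set{Z_\alpha}$ of \eqref{Z} must be cut out by a coweight coset of the restricted admissible form, rather than by some more general family of subgroups $n_\alpha\Int$. The subtlety is that \eqref{Z} links the $n_\alpha$ across a component in a rigid way---the ratios $n_\beta/n_\alpha$ for non-orthogonal $\alpha,\beta$ are constrained exactly by the rank-two divisibility \eqref{eq19}, and one must argue that the only consistent solutions are $n_\alpha=m_ik_{i,\alpha}$ with $k_{i,\alpha}\in\set{1,k_{\Psi_i}}$ governed uniformly by a single lattice choice per component. Once the $n_\alpha$ are pinned down, choosing $a\in P_Z(\Psi)$ with $\mpair{\alpha,a}$ in the prescribed coset of $n_\alpha\Int$ for all $\alpha$ recovers $X=a+X'$, and such $a$ exists because the coweights furnish enough freedom; the well-definedness of $X$ modulo $X'$ is immediate from \eqref{eq:Xchar}. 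Finally I would record that the two constructions are mutually inverse: applying $Z_\alpha=\mpair{\alpha,X}$ and then reconstructing $X'$ via \eqref{eq:Xchar} returns the same coset, and conversely, which establishes the bijection.
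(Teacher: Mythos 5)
Your first two steps match the paper: well-definedness of $(\Psi,X)\mapsto\Phi^a(\Psi,X)$ is checked against \eqref{Z} via \eqref{eq:admprod} and \eqref{eq19}, and injectivity follows from recovering $X$ as $\mset{p\in P(\Psi)\mid \mpair{p,\alpha}\subseteq Z_\alpha\text{ for all }\alpha\in\Psi}$ using \eqref{eq:Xchar}. The problem is surjectivity. You propose to start from an arbitrary solution $\set{Z_\alpha}$ of \eqref{Z} and argue (i) that each $Z_\alpha$ is a coset of some $n_\alpha\Int$, (ii) that the family $(n_\alpha)$ is forced into the admissible shape $n_\alpha=m_ik_{i,\alpha}$, governed uniformly on each component by one of the two lattices $m_iP(\Psi_i)$ or $m_iP(\Psi_i^\circ)$, and (iii) that the base points of the cosets $Z_\alpha$ are simultaneously realised by a single $a\in P(\Psi)$. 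You name (ii) as ``the main obstacle'' and then do not supply the argument: ``one must argue that the only consistent solutions are\dots'' is a statement of what remains to be proved, not a proof. Step (iii) is likewise waved through (``the coweights furnish enough freedom''), but the base points for non-simple $\alpha$ are constrained by \eqref{Z} in terms of the simple ones, and one must verify that defining $a$ on a simple system propagates consistently to all of $\Psi$. Even step (i) needs more than the remark that differences form a subgroup: one must show that a subset of $\Int$ closed under $(x,y)\mapsto 2y-x$ is a full coset of the subgroup generated by its differences.

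The paper avoids all of this by routing surjectivity through the first parameterisation: by Theorem \ref{thm1} every root subsystem of $\Phi^a$ is $\Phi^a(\Gamma,f)$ for some $(\Gamma,f)\in\RR(\Phi,\Int)$, and Proposition \ref{prop:niprootsyst} gives the explicit description $\Phi^a(\Gamma,f)=\mset{\beta+(r_\beta+mK_ik_{\beta,\Gamma})\delta}$, which is visibly equal to $\Phi^a(\Sigma,p+X')$ for the explicit admissible lattice $X'$ of \eqref{eq:compare} and any $p$ with $\mpair{p,\alpha}=r_\alpha$ on $\Gamma'$. If you want to keep your direct approach, you must actually solve the system of constraints $n_\beta\Int+\mpair{\beta,\ck\alpha}n_\alpha\Int\subseteq n_{s_\alpha(\beta)}\Int$ together with the accompanying congruences on base points; this amounts to reproving a piece of structure theory that Theorem \ref{thm1} and Proposition \ref{prop:niprootsyst} already encode (for instance, ruling out $(n_{\mathrm{short}},n_{\mathrm{long}})=(2,1)$ in type $B_2$ by a rank-two computation). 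As written, the surjectivity half of your proof is a plan rather than an argument.
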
    
\begin{proof} First we show that there is a map 
$g\colon (\Psi,X)\mapsto \Phi^a(\Psi,X)$ as indicated   from $\RR'(\Phi,\Int)$ to the set of 
root subsystems of $\Phi^{a}$. Fix $(\Psi,X)\in \RR'(\Phi,\Int)$ 
and define $Z_{\alpha}:=\mpair{\alpha,X}\subseteq \Int$. for $\alpha\in \Psi$. 
Write $X=a+X'$ where $X'$ is an admissible coweight lattice for $\Psi$ 
and $a\in P({\Psi})$. Note that  $Z_{\alpha}=\mpair{a,\alpha}+\Int n_{\alpha}$ 
for the unique integer $n_{\alpha}\geq 0$, such that 
$n_{\alpha}\Int=\mpair{X',\alpha}$. 
It is easy to check from \eqref{eq:admprod} and  
\eqref{eq19} that the pair $(\Psi,\mset{Z_{\alpha}}_{\alpha\in \Psi})$ 
satisfies the condition of Corollary \ref{cor:Zsyst},
and so determines a root subsystem 
$g(\Psi,X)=\Phi^{a}(\Psi,\mset{Z_{\alpha}}_{\alpha\in \Psi})$ as
in Corollary \ref{cor:Zsyst}.

 We now  show  that $g$ is injective.  
Fix $(\Psi,X)\in \RR'(\Phi,\Int)$ 
and define $Z_{\alpha}:=\mpair{\alpha,X}\subseteq \Int$ for $\alpha\in \Psi$ as above. 
Since 
\bee \Psi=\mset{\alpha\in \Phi\mid \alpha+n\delta\in \Phi^{a}(\Psi,X)
\text{ \rm for some $n\in \Int$,}}
\eee 
it will suffice to show that 
$\mset{Z_{\alpha}}_{\alpha\in \Psi}$ uniquely determines $(\Psi,X)$.
It  follows from \eqref{eq:Xchar} that
 \bee \label{eq:Xprimechar} 
X'=\mset{p\in P({\Psi})\mid \mpair{p,\alpha}\subseteq n_{\alpha}\Int
\text{ \rm  for all $\alpha\in \Psi$.}}
\eee
From this, we obtain
$ X=\mset{p\in P({\Psi})\mid \mpair{p,\alpha}
\subseteq Z_{\alpha}\text{ \rm  for all $\alpha\in \Psi$}}$. 
The injectivity of the map $g$ follows.

Now to show  that  $g$
is surjective, it will suffice to show that for $(\Gamma,f)$ as in Theorem \ref{thm1}, 
there is some pair $(\Psi,X)$ with
$\Phi^{a}(\Gamma,f)=\Phi^{a}(\Psi,X)$. In the following argument, we use the notation 
of \ref{ss:nipnot} concerning $\Gamma$. 
Choose $\Psi=\Sigma$ and  $p\in P({\Sigma})$ such that $\mpair{p,\alpha}=r_{\alpha}$ for all 
$\alpha\in \Gamma'$ and  define $X'=\oplus_{i}X_{i}\subseteq P({\Sigma})$
where
% \mar{eq:compare} 
\be \label{eq:compare} 
X'_{i}:=\begin{cases}0,&\text{if $\Gamma'=\Gamma_{i}$}\\
K_{i}P({\Gamma_{i}}),&\text{if  $\Gamma'\neq \Gamma_{i}$ and $\alpha_{i,0}$ is long}\\
K_{i}P({\Gamma_{i}}^{\circ}),&\text{if  $\Gamma'\neq \Gamma_{i}$ and $\alpha_{i,0}$ is short.}
\end{cases}
\ee 
Then  $X'=\oplus X'_{i}$ is a $\Int$-admissible coweight lattice  of $P({\Sigma})$ and 
from  Proposition \ref{prop:niprootsyst} and \eqref{eq:admprod}, we see that  $\Phi^a(\Sigma, p+X')=\Phi^{a}(\Gamma,f)$.
This completes the proof.
\end{proof}

\subsection{} \label{ss:bij1}
%\mar{ss:bij1}  
Evidently there is a unique bijection  $j=j_{\Int}\colon \RR(\Phi,\Int)\rightarrow \RR'(\Phi,\Int)$ such that, 
setting $j(\Gamma,f)=(\Psi,X)$, we have $\Phi^{a}(\Psi,X)=\Phi^{a}(\Gamma,f)$.
The above proof indicates how to compute $j(\Gamma,f)$ explicitly from $(\Gamma,f)$.
Computing $j^{-1}(\Psi,X)$ amounts to determining the canonical simple system  
of $\Phi^{a}(\Psi,X)$. 
We describe below one simple  way in which this may be done;
a more natural  procedure  using alcove geometry will be described in \S \ref{comparison}. 

Suppose $(\Psi,X)\in \RR'(\Phi,\Int)$. We may find
$(\Gamma,f)\in \RR(\Phi,\Int)$  with $\Phi^a(\Gamma,f)=\Phi^{a}(\Psi,X)$ as follows.
For $\alpha\in \Psi$, write $Z_{\alpha}=\mpair{X,\alpha}\subseteq \Int$. 
and $Z'_{\alpha}:=\mset{n\in Z_{\alpha}\mid\alpha+n\delta\in \Phi^a_{+}}$. 
Let $\Psi':=\mset{\alpha\in \Psi\mid Z'_{\alpha}\neq \emptyset}$ and for
$\alpha\in \Psi'$, let $r'_{\alpha}:=\min(Z'_{\alpha})$.
Let $\Delta$ denote the simple system contained in $\Phi^a_{+}$ of $\Phi^a(\Psi,X)$ and  
$\Delta'':=\mset{\alpha+r'_{\alpha}\alpha\mid \alpha\in \Psi'}$.
One then checks that $\Delta\subseteq \Delta''$.
It is well known that $\Delta$ is the  unique inclusion-minimal subset $\Delta'$ of 
$\Phi^{a}_{+}\cap \Phi^a(\Psi,X)$ such that $\Phi^{a}(\Psi,X)\cap \Phi^a_{+}\subseteq \Nat\Delta$.
It follows that $\Delta$ is the unique inclusion-minimal subset $\Delta'$ of 
the finite set  $\Delta''$  such that $\Delta''\subseteq \Nat\Delta'$. Thus,  
$\Delta$ in $\Phi^{a}_{+}$ may be effectively determined. 
Set $\Gamma=\pi( \Delta)$ (a \nip in $\Phi$) and define 
$f\colon \Gamma\rightarrow \Int_{\geq 0}$ by $f(\alpha)=r'_{\alpha}$. Then $\Phi^a(\Gamma,f)=\Phi^{a}(\Psi,X)$.

\subsection{} 
We may now use the action of $\wt  W^{a}$ on $V$  to describe the $\wt W^{a}$-action 
on root subsystems of $\Phi^{a}$ in terms of their parameterisation in 
Theorem \ref{thm:rootcoset}. For any subset $\Sigma$ of $\Phi$, we let
$p_{\Sigma}\colon V\rightarrow \real \Sigma$ denote the orthogonal projection
of $V$ on $\real\Sigma$.
\begin{corollary}\label{cor:affcosetact}
% \mar  {cor:affcosetact}  
Let  $w=t_{\gamma}x\in \wt W^{a}$ where $x\in W$ and $\gamma\in P(\Phi)$.
Let $(\Psi,X)\in \RR'(\Phi,\Int).$  Then $\Psi':=x(\Psi)$ is a root 
subsystem of $\Phi$ and $X'':=p(\tau_{\gamma}x(X))$ is a coset in 
$P({\Psi'})$ of some admissible coweight lattice of $P({\Psi'})$, where 
$p=p_{\Psi'}$.\ Further, $w\Phi^a(\Psi,X)=\Phi^a(\Psi',X'')$.\end{corollary}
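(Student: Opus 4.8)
The plan is to compute the action of $w = t_\gamma x$ on the root subsystem $\Phi^a(\Psi, X)$ by factoring $w$ through its constituents and applying Lemma \ref{lem:affZact}, then translating the resulting description of the $Z_\alpha$-data back into the coset-of-admissible-lattice language of Theorem \ref{thm:rootcoset}. Since $\wt W^a = W \ltimes \tau_P$ and $w = t_\gamma x$ with $x \in W$ and $\gamma \in P(\Phi)$, I would first apply part (a) of Lemma \ref{lem:affZact} to handle $x$, then part (b) to handle $t_\gamma$. Writing $Z_\alpha = \mpair{\alpha, X}$ for $\alpha \in \Psi$, the reflection/rotation $x$ sends the family $\set{Z_\alpha}$ to $\set{Z_{x^{-1}(\alpha)}}_{\alpha \in x(\Psi)}$, so that $\Psi' = x(\Psi)$ is immediate, and this is already a root subsystem of $\Phi$ since $W$ preserves the collection of root subsystems.

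\emph{Identifying the coset.} The content of the statement is that the transformed data again has the form $\mpair{\alpha, X''}$ for a coset $X''$ of an admissible coweight lattice. First I would verify the formula $X'' = p(\tau_\gamma x(X))$ where $p = p_{\Psi'}$. The key observation is that for $\alpha \in \Psi' = x(\Psi)$, the new subset should be $Z'_\alpha = Z_{x^{-1}(\alpha)} + \mpair{\alpha, \gamma}$ by combining the two parts of Lemma \ref{lem:affZact}. I would rewrite $Z_{x^{-1}(\alpha)} = \mpair{x^{-1}(\alpha), X} = \mpair{\alpha, x(X)}$, using that $x$ is an isometry, and then $Z'_\alpha = \mpair{\alpha, x(X)} + \mpair{\alpha, \gamma} = \mpair{\alpha, x(X) + \gamma} = \mpair{\alpha, \tau_\gamma x(X)}$. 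Since $\alpha \in \real\Psi'$, only the $\real\Psi'$-component of $\tau_\gamma x(X)$ contributes to the pairing, so $\mpair{\alpha, \tau_\gamma x(X)} = \mpair{\alpha, p(\tau_\gamma x(X))} = \mpair{\alpha, X''}$. This realizes $Z'_\alpha = \mpair{\alpha, X''}$, matching the form required by Theorem \ref{thm:rootcoset}.

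\emph{The main obstacle}, and the step I would treat most carefully, is checking that $X''$ really is a \emph{coset of an admissible coweight lattice} of $P(\Psi')$, not merely an arbitrary subset of $P(\Psi')$. Here I would argue that admissibility is a $W$-invariant notion: if $X = a + X'$ with $X' = \oplus_i X'_i$ admissible for $\Psi$, then $x(X) = x(a) + x(X')$, and $x$ permutes the components $\Psi_i$ of $\Psi$ to those of $\Psi'$ while preserving root lengths and hence the operation $(-)^\circ$; thus $x(X')$ is admissible for $\Psi'$ by the $W_\Psi$-invariance of $P(\Psi_i)$ and $P(\Psi_i^\circ)$ used in \S\ref{ss:nipnot}. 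Translation by $\gamma$ shifts the coset representative but leaves the underlying lattice unchanged, and applying $p = p_{\Psi'}$ has no effect on elements already in $\real\Psi'$; since $x(X) \subseteq \real\Psi'$ the projection acts as the identity on $x(X)$, and one must check it sends $\gamma \in P(\Phi)$ into $P(\Psi')$, which follows because $p_{\Psi'}$ maps the coweight lattice $P(\Phi)$ into $P(\Psi')$. I would also note that the equality $w\Phi^a(\Psi, X) = \Phi^a(\Psi', X'')$ is then forced, since both sides have the same $Z$-data $\set{Z'_\alpha}$ and the correspondence of Corollary \ref{cor:Zsyst} is a bijection.
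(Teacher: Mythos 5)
Your proposal is correct and follows essentially the same route as the paper's proof: decompose $w=t_\gamma x$, apply both parts of Lemma \ref{lem:affZact} to get $Z'_\alpha=Z_{x^{-1}\alpha}+\mpair{\alpha,\gamma}$, identify this with $\mpair{\alpha,X''}$, and verify separately that $x(X')$ remains admissible and that $p(\gamma)\in P(\Psi')$ (the paper checks the latter exactly as you implicitly do, via $\mpair{\alpha,p(\gamma)}=\mpair{\alpha,\gamma}\in\Int$ for $\alpha\in\Psi'$). The only difference is order of presentation --- the paper establishes the coset claim before matching the $Z$-data --- which is immaterial.
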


\begin{remark} Note that  the isomorphism  
$t_{\gamma}x\leftrightarrow \tau_{\gamma}x$
between the affine and linear versions of $\wt W^{a}$ used 
implicitly above is different from the one
($t_{\gamma}x\leftrightarrow \tau_{-\gamma}x$) used in
Proposition \ref{prop:afflinear}.
\end{remark}
\begin{proof}  Write $X=a+X'$ where $a\in 
P({\Psi})$ and  $X'$ is an admissible coweight lattice  of $P({\Psi})$.
Clearly, $\Psi'$ is a root subsystem of $\Phi$ and $X''':=x(X')$ is
an admissible coweight lattice  of
$P({\Psi'})$. We have $\tau_{\gamma}x(X)=\gamma+x(a)+X'''$ 
where clearly $x(a)\in P({\Psi'}) $. Hence $p\tau_{\gamma}x(X)=p(\gamma)+x(a)+X''$. 
To prove the assertion about $X''$, it will suffice to show that 
$p(\gamma)\in P({\Psi'})$. But this holds since 
for any $\alpha\in \Psi'$, we have $\mpair{\alpha,p(\gamma)}=\mpair{\alpha,\gamma}\in \Int$. 
This proves the first claim. The final claim follows from 
Lemma   \ref{lem:affZact} as follows. Write $Z_{\alpha}=\mpair{X,\alpha}$ for $\alpha\in \Psi$. 
Then by Lemma \ref{lem:affZact}, 
$ w\Phi^{a}(\Psi,X)=w\Phi^{a}(\Psi,\set{Z_{\alpha}}_{\alpha\in \Psi})=
\Phi^a(\Psi',\set{Z'_{\alpha}}_{\alpha\in \Psi'})$ where 
for $\alpha\in \Psi'$, $Z'_{\alpha}=Z_{x^{-1}\alpha}+\mpair{\alpha,\gamma}=\mpair{x^{-1}\alpha,X}+\mpair{\alpha,\gamma}$.
But from the above,  $X''=p\tau_{\gamma}x(X)=p(\gamma)+x(a)+x(X')=p(\gamma)+x(X)$, and so 
\[ 
\mpair{\alpha,X''}=\mpair{\alpha,p(\gamma)+x(X)}=
\mpair{\alpha,p(\gamma)}+\mpair{\alpha, x(X)}=\mpair{\alpha,\gamma}+\mpair {x^{-1}\alpha,X}.
\]  
Hence $\Phi^a(\Psi',\set{Z'_{\alpha}}_{\alpha\in \Psi'})=\Phi^a(\Psi',X')$ as required.
\end{proof}

\subsection{Affine versus finite} 
Now fix $(\Psi,X)\in \RR'(\Phi,\Int)$ and write $X=b+X'$, where $b\in P({\Psi})$ and
$X'$ is an admissible coweight lattice  of $P({\Psi})$.
Let $\set{\Psi_{i}}$ be the components of $\Psi$, 
and write $X'=\sum_{i}X'_{i}$ where for each $i$, 
either $X'_{i}=m_{i}P({\Psi_{i}})$, $X'_{i}=m_{i}P({\Psi_{i}^{\circ})}$ 
with $m_{i}\in \Int_{>0}$
or $X'_{i}=0$ and $m_{i}:=0$. 
Set $Y_{i}:=m_{i}Q(\Psi_{i})$, $Y_{i}:=m_{i}Q(\Psi_{i}^\circ)$ or $Y_{i}:=0$ 
accordingly, and define 
$Y=\sum_{i}Y_{i}$. 
It is clear that $\Phi^{a}(\Psi,X)$ is finite  
(equivalently, $W^{a}(\Psi,X)$  is finite) if and 
only if $X'=\set{0}$. Also, $W^{a}(\Psi,X)$ has only 
affine type  components (i.e.  no finite components) if 
and only if   $X'_{i}\neq \set{0}$ for all $i$.
Finally, $W^{a}(\Psi,X)$ is of finite index in $W^{a}$ if 
and only if its translation subgroup is of finite index in that 
of $W^{a}$, which is the case if and only if $X'_{i}\neq 0$ for 
all $i$ and $\real \Phi=\real \Psi$. 
 
\subsection{} We now  describe a decomposition of $W^{a}(\Psi,X)$ as the 
semidirect product of its translation subgroup and a finite reflection group, 
and explicitly describe its elements. 
 \begin{proposition}\label{prop:subgpelts}
 %\mar{prop:subgpelts}
 Let $X=b+X'$ be a coset in $P({\Psi})$ of an admissible coweight 
 lattice $X'$ for  $\Psi$, and let $Y$ be the admissible coroot lattice 
 corresponding to $X'$. Then\begin{num}\item $W^{a}(\Psi, X)$ is a semidirect product
 $W^{a}(\Psi, X)=W''\ltimes t_{Y}$ of the finite reflection subgroup 
 $W'':=\mset{t_{b-w(b)}w\mid w\in W_{\Psi}}$ by the translation   group $t_{Y}:=\mset{t_{\gamma}\mid \gamma\in Y}$.
 \item $W^{a}(\Psi,X)=\mset{t_{b-w(b)+\gamma}w\mid w\in W_{\Psi},\gamma\in Y}$.
 \end{num}\end{proposition}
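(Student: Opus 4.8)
The plan is to work throughout in the linear realisation on $\wh V$, where $G:=W^{a}(\Psi,X)$ is generated by the reflections $s_{\alpha+n\delta}=t_{n\ck\alpha}s_{\alpha}$ (by \eqref{eq2}) as $\alpha$ runs over $\Psi$ and $n$ over $Z_{\alpha}=\mpair{\alpha,b}+n_{\alpha}\Int$, where $n_{\alpha}\geq 0$ is determined by $n_{\alpha}\Int=\mpair{X',\alpha}$. The first observation is that, by \eqref{eq1}, $wt_{\gamma}w^{-1}=t_{w(\gamma)}$ for $w\in W$, so that $W''=t_{b}W_{\Psi}t_{b}^{-1}$; hence $W''$ is a finite reflection subgroup isomorphic to $W_{\Psi}$, and the map $w\mapsto t_{b-w(b)}w$ is an isomorphism $W_{\Psi}\xrightarrow{\sim}W''$.

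Next I would locate generators of $W''$ and of $t_{Y}$ inside $G$. Since $s_{\alpha}(b)=b-\mpair{\alpha,b}\ck\alpha$, one has $b-s_{\alpha}(b)=\mpair{\alpha,b}\ck\alpha$, and as $b\in X$ gives $\mpair{\alpha,b}\in Z_{\alpha}$, the element $t_{b-s_{\alpha}(b)}s_{\alpha}=s_{\alpha+\mpair{\alpha,b}\delta}$ is one of the generators of $G$. These images of the $s_{\alpha}$ generate $W''$, so $W''\subseteq G$. For the translations, \eqref{eq3} shows $t_{n_{\alpha}\ck\alpha}\in G$; the key point is that the additive subgroup of $V$ generated by $\set{n_{\alpha}\ck\alpha\mid\alpha\in\Psi}$ is exactly $Y$. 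This is where the main work lies: using \eqref{eq:admprod}, namely $n_{\alpha}=m_{i}k_{i,\alpha}$, together with the explicit description of the coroots of $\Psi_{i}^{\circ}$ (the short coroots $\ck\alpha$ and the scaled long coroots $k_{\Psi_{i}}\ck\alpha$), one checks component by component that $\set{n_{\alpha}\ck\alpha}$ generates $Y_{i}=m_{i}Q(\Psi_{i})$ or $m_{i}Q(\Psi_{i}^{\circ})$ as appropriate. Hence $t_{Y}\subseteq G$.

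Finally I would assemble the semidirect product. The set $\set{n_{\alpha}\ck\alpha}$ is $W_{\Psi}$-stable (since $n_{\alpha}$ depends only on the length of $\alpha$ within its component), so $Y$ is $W_{\Psi}$-stable and $t_{Y}$ is normalised by $W''$; thus $t_{Y}\trianglelefteq\langle W'',t_{Y}\rangle$. Comparing linear parts gives $W''\cap t_{Y}=\set{1}$. It remains to see $G=\langle W'',t_{Y}\rangle$: writing $n=\mpair{\alpha,b}+kn_{\alpha}\in Z_{\alpha}$, each generator factors as $s_{\alpha+n\delta}=t_{kn_{\alpha}\ck\alpha}\bigl(t_{b-s_{\alpha}(b)}s_{\alpha}\bigr)\in t_{Y}W''$, so $G\subseteq\langle W'',t_{Y}\rangle$, and the reverse inclusion is already established. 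This proves (a), $G=W''\ltimes t_{Y}$. For (b), normality of $t_{Y}$ gives $G=t_{Y}W''$, and a general element is $t_{\gamma}\,t_{b-w(b)}w=t_{b-w(b)+\gamma}w$ with $\gamma\in Y$, $w\in W_{\Psi}$, which is exactly the asserted description. The only genuine obstacle is the lattice identification $Y=\langle n_{\alpha}\ck\alpha\rangle$, a finite case check across the short/long and $P$-versus-$P^{\circ}$ possibilities; everything else is formal manipulation with \eqref{eq1}--\eqref{eq3}.
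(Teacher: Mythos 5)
Your proof is correct and follows essentially the same route as the paper: both identify the finite part via the reflections $s_{\alpha+\mpair{\alpha,b}\delta}=t_{b-s_{\alpha}(b)}s_{\alpha}$ and the translation part via the elements $t_{n_{\alpha}\ck\alpha}$, with the identification of the lattice they generate with $Y$ resting on \eqref{eq:admprod}. The only (harmless) difference is that where the paper invokes the general fact that the translation subgroup of an affine Weyl group is generated by products of two reflections, you instead factor each generator $s_{\alpha+n\delta}$ explicitly as an element of $t_{Y}W''$, which makes the final assembly slightly more self-contained.
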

 \begin{proof} A typical root of $\Phi^a(\Psi,X)$ is 
 of the form $\alpha+\mpair{\alpha,b+z}\delta$ 
 where $\alpha\in \Psi$ and  $z\in X'$, with corresponding reflection 
 $s_{\alpha+\mpair{\alpha,b+z}\delta} =t_{\mpair{\alpha,b+z}\ck \alpha}s_{\alpha}$.
 For each $\alpha\in \Psi$,  we have $s'_{\alpha}:=s_{\alpha+\mpair{\alpha,b}\delta}=
 t_{\mpair{\alpha,b}\ck\alpha}s_{\alpha}=t_{b-s_{\alpha}b}s_{\alpha}\in W^{a}(\Psi,X)$. 
   It is easy to check that the  reflection  subgroup of 
   $W^{a}$ generated by $s'_{\alpha}$ for $\alpha\in \Psi$ is $W''$. 
   For any affine Weyl group, the group of translations is generated by 
   the translations which are products of two reflections.  Therefore, the 
   subgroup $T'$ of translations of $W^a (\Psi,X)$ is generated by elements  
  $s_{\alpha+\mpair{\alpha,b+z}\delta}s_{\alpha+\mpair{\alpha,b}\delta}=
  t_{\mpair{\alpha,z}\ck \alpha}$ for $z\in X'$ and $\alpha\in \Psi$.  
  Write $\mpair{\alpha,X'}=n_{\alpha}\Int$, so $T'$ is generated by
  $t_{n_{\alpha}\ck \alpha}$ for $\alpha\in \Psi$. But it  follows easily from \eqref{eq:admprod}
  that $T'=t_{Y}$.    The rest of (a) follows readily, and (b) follows immediately from  (a). 
   \end{proof}

\subsection{} The next result describes the pointwise stabiliser  
in $\wt W$ of a root subsystem and
the centraliser in $\wt W$ of the corresponding reflection subgroup, where $\wt W$ is as in 
\ref{genextaff}.

\begin{proposition}\label{prop:fixer}
%\mar{prop:fixer} 
\begin{num}\item The pointwise stabiliser of $\Phi^a(\Psi,X)$ in $\wt W$
is the semidirect product $W''\rtimes T$ where $W''$ is the parabolic subgroup 
$W''=W_{\Phi\cap \Psi^{\perp}} $ of $W$ and $T$ is the group of translations
$T=t_{R\cap \Psi^{\perp}}:=\mset{t_{\gamma}\mid \gamma\in R\cap \Psi^{\perp}}$.
\item For $w\in W$ and $\gamma\in R$, $wt_{\gamma}$ centralizes $W^{a}(\Psi,X)$ 
if and only if for each component $\Psi_{i}$ of $\Psi$, one of the two conditions (i) or (ii) below hold:

(i) for all $\alpha\in \Psi_{i}$, $w(\alpha)=\alpha$ and $\mpair{\gamma,\alpha}=0$

(ii) $X_{i}=0$
% \mar{Check!}
 and for all $\alpha\in \Psi_{i}$, $w(\alpha)=-\alpha$ and $\mpair{\gamma,\alpha}=
 -2\mpair{a,\alpha}$ where $a$ is a representative of  the coset $X$. 
\end{num} 
\end{proposition}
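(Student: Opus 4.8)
The plan is to reduce both statements to the explicit action formula \eqref{eq5}. Writing a general element of $\wt W$ as $wt_{\gamma}$ with $w\in W$ and $\gamma\in R$, formula \eqref{eq5} gives $(wt_{\gamma})(\alpha+n\delta)=w(\alpha)+(n+\mpair{\alpha,\gamma})\delta$ for $\alpha\in\Phi$, $n\in\Int$. The roots of $\Phi^a(\Psi,X)$ are exactly the $\alpha+n\delta$ with $\alpha\in\Psi$ and $n\in Z_{\alpha}=\mpair{\alpha,X}$, so the whole analysis comes down to controlling $w(\alpha)$ and $\mpair{\alpha,\gamma}$ as $\alpha$ ranges over $\Psi$.

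For (a), $wt_{\gamma}$ lies in the pointwise stabiliser iff $(wt_{\gamma})(\alpha+n\delta)=\alpha+n\delta$ for every root of $\Phi^a(\Psi,X)$; by the displayed formula this holds iff $w(\alpha)=\alpha$ and $\mpair{\alpha,\gamma}=0$ for all $\alpha\in\Psi$ (each $\alpha\in\Psi$ occurs, since $Z_{\alpha}\neq\emptyset$). The first condition says $w$ fixes $\real\Psi$ pointwise; as the pointwise stabiliser in $W$ of the subspace $\real\Psi$ is the stabiliser of a generic point of $\real\Psi$, which is the parabolic subgroup $W_{\Phi\cap\Psi^{\perp}}$, this gives $w\in W''$. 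The second condition says $\gamma\in\Psi^{\perp}$, i.e.\ $\gamma\in R\cap\Psi^{\perp}$, so $t_{\gamma}\in T$. I would then check that $W''\cap T=1$ (inside $\wt W=W\ltimes\tau_{R}$) and that $W''$ normalises $T$, using $wt_{\gamma}w\inv=t_{w\gamma}$, the fact that $W''$ preserves $\Psi^{\perp}$, and the $W$-stability of $R$, thereby realising the internal semidirect product $W''\rtimes T$.

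For (b), $wt_{\gamma}$ centralises $W^a(\Psi,X)$ iff it commutes with every reflection $s_{\alpha+n\delta}$ of the group, equivalently (since $s_{\beta}=s_{\beta'}$ iff $\beta'=\pm\beta$) iff $(wt_{\gamma})(\alpha+n\delta)=\pm(\alpha+n\delta)$ for every root $\alpha+n\delta$ of $\Phi^a(\Psi,X)$. By the displayed formula the $+$ case forces $w(\alpha)=\alpha$ and $\mpair{\alpha,\gamma}=0$, while the $-$ case forces $w(\alpha)=-\alpha$ and $\mpair{\alpha,\gamma}=-2n$. As $w(\alpha)$ is determined by $\alpha$, exactly one sign is available for each $\alpha$, and I would first note that the $-$ case can hold for \emph{all} $n\in Z_{\alpha}$ only when $Z_{\alpha}$ is a single point, i.e.\ $n_{\alpha}=0$; by \eqref{eq:admprod} this happens precisely when the component $X_{i}$ is trivial, and then $n=\mpair{\alpha,a}$ yields $\mpair{\alpha,\gamma}=-2\mpair{a,\alpha}$.

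The key remaining point, which I expect to be the main obstacle, is to upgrade these pointwise conditions to the uniform per-component alternatives (i) and (ii). Here I would exploit that $w$ is an isometry: if $w(\alpha)=\alpha$ and $w(\beta)=-\beta$ then $\mpair{\alpha,\beta}=\mpair{w\alpha,w\beta}=-\mpair{\alpha,\beta}$, so $\mpair{\alpha,\beta}=0$. Thus within a fixed indecomposable component $\Psi_{i}$ the roots fixed by $w$ and those negated by $w$ are mutually orthogonal, and by indecomposability of $\Psi_{i}$ one of the two sets is empty; hence $w$ acts on $\real\Psi_{i}$ uniformly as $\id$ or as $-\id$. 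Combining this dichotomy with the constraint that the $-\id$ alternative requires $X_{i}=0$ gives precisely alternatives (i) and (ii) on each component, completing the proof. I would finally remark on the consistency with (a): imposing (i) on every component recovers the pointwise stabiliser, as it must.
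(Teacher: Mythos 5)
Your proposal is correct and follows essentially the same route as the paper: both parts reduce to the action formula \eqref{eq5}, part (a) invokes the identification of the pointwise stabiliser of $\real\Psi$ in $W$ with the parabolic subgroup $W_{\Phi\cap\Psi^{\perp}}$, and part (b) reduces centralising to $x\beta=\pm\beta$ for all roots $\beta$, uses indecomposability of each $\Psi_{i}$ to force a uniform sign per component, and identifies the $-\id$ case with $X_{i}=0$ via the singleton condition on $Z_{\alpha}$. You merely supply details the paper leaves implicit (the orthogonality argument behind the per-component dichotomy and the semidirect-product verification in (a)).
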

\begin{proof} Part (a) follows  from   \eqref{eq5}, using the fact 
that the centraliser in $W$ of any subset $L$ of $V$ is the parabolic 
subgroup of $W$  generated by $s_{\alpha}$ for $\alpha\in \Phi\cap L^{\perp}$. 
For part (b),  one notes first  
that   $x\in \wt W$  centralises  
$W^{a}(\Psi,X)$ if and only if  for each $\beta\in \Phi^{a}(\Psi,X)$, 
we have $x\beta=\pm \beta$. It is readily verified that this suffices to conclude that 
$wt_{\gamma}$ is in the centraliser. 

To prove necessity in (b),  one notes first  
that if  $x\in \wt W$  centralises  
$W^{a}(\Psi,X)$, then  for each $\beta\in \Phi^{a}(\Psi,X)$, 
we have $x\beta=\pm \beta$.
Writing $x=wt_{\gamma}$ and $\beta=\alpha+n\delta$, \eqref{eq5} implies that 
$w\alpha=\pm \alpha$ for each $\alpha\in \Psi$.  By irreducibility, for each component 
$\Psi_{i}$ of $\Psi$, we then have either $w\alpha=\alpha$ for all $\alpha\in \Psi_{i}$ or 
$w\alpha=-\alpha$ for all $\alpha\in \Psi_{i}$. A further application of \eqref{eq5}
now shows that either (i) or (ii) holds.
\end{proof}

\subsection{} We describe the $\wt W$-orbit  
of $\Phi^a(\Psi,X)$ and the  setwise stabiliser of $\Phi^a(\Psi,X)$ 
in $\wt W$ (which  coincides with  the normaliser in $\wt W$ of $W^{a}(\Psi,X)$).
For this, we consider a second root system $\Phi^a(\Psi_{1},X_{1})$,
for which we use similar notation as for $\Phi^a(\Psi,X)$. In particular, 
write $X=a+X'$ where $a$ is a representative of $X$ and  $X_{1}=a_{1}+X_{1}'$,  etc). 
\begin{proposition}\label{prop:normaliser}
%\mar{normaliser} 
Let  $w\in W$ and $\gamma\in R$. Then 
\begin{num}
\item    $t_{\gamma}w(\Phi^a(\Psi_{1},X_{1})=\Phi^a(\Psi,X)$ 
if and only if $w(\Psi_{1})=\Psi$, $w(X_{1}')=X'$ and 
$a\in w(a_{1})+p_{\Psi}(\gamma)+X'$.
\item    $\Phi^a(\Psi_{1},X_{1}) $ and 
$ \Phi^a(\Psi,X)$ are in the same $\wt W$-orbit
if there exists $w'\in W$ with $w'(\Psi_{1})=\Psi$, $w'(X_{1}')=X'$ and 
$a-w'(a_{1})\in p_{\Psi}(R')+X'$.
\item $t_{\gamma}w$ normalises $W^{a}(\Psi,X)$ 
if and only if  $w(\Psi)=\Psi$, $w(X')=X'$ and 
$p_{\Psi}(\gamma)\in a-w(a)+X'$.
\end{num} \end{proposition}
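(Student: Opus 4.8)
The plan is to derive all three parts from the explicit description of the $\wt W^{a}$-action on root subsystems given in Corollary \ref{cor:affcosetact}, combined with the injectivity of the parameterisation $(\Psi,X)\mapsto \Phi^a(\Psi,X)$ from Theorem \ref{thm:rootcoset}. Part (a) is the substantive computation, and parts (b) and (c) then follow formally. First I would record the reduction used for (c): since reflection subgroups of $W^{a}$ correspond bijectively to root subsystems of $\Phi^a$, and conjugation by $\wt W$ corresponds to the $\wt W$-action on root subsystems (as noted before Lemma \ref{lem:affZact}), an element $t_{\gamma}w$ normalises $W^{a}(\Psi,X)$ if and only if it stabilises $\Phi^a(\Psi,X)$ setwise, i.e. $t_{\gamma}w\,\Phi^a(\Psi,X)=\Phi^a(\Psi,X)$.

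For part (a), I would apply Corollary \ref{cor:affcosetact} with the element $t_{\gamma}w$ (so $x=w$) to the pair $(\Psi_{1},X_{1})$, writing $X_{1}=a_{1}+X_{1}'$. This gives $t_{\gamma}w\,\Phi^a(\Psi_{1},X_{1})=\Phi^a(\Psi_{1}',X_{1}'')$ where $\Psi_{1}'=w(\Psi_{1})$ and $X_{1}''=p_{\Psi_{1}'}(\tau_{\gamma}w(X_{1}))$. Expanding as in the proof of that corollary, $\tau_{\gamma}w(X_{1})=\gamma+w(a_{1})+w(X_{1}')$, and since $w(X_{1}')\subseteq \real\Psi_{1}'$ is fixed by the projection while $p_{\Psi_{1}'}(w(a_{1}))=w(a_{1})$, one obtains the coset $X_{1}''=\bigl(p_{\Psi_{1}'}(\gamma)+w(a_{1})\bigr)+w(X_{1}')$ of the admissible coweight lattice $w(X_{1}')$. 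By the injectivity in Theorem \ref{thm:rootcoset}, the equality $\Phi^a(\Psi_{1}',X_{1}'')=\Phi^a(\Psi,X)$ holds if and only if $\Psi_{1}'=\Psi$ and $X_{1}''=X$ as cosets. The first equality is $w(\Psi_{1})=\Psi$. Granting it, $p_{\Psi_{1}'}=p_{\Psi}$, and the equality of cosets $X_{1}''=a+X'$ in $P(\Psi)$ splits, by the standard criterion for equality of cosets of subgroups, into equality of the underlying lattices $w(X_{1}')=X'$ together with the congruence $p_{\Psi}(\gamma)+w(a_{1})\equiv a \pmod{X'}$, i.e. $a\in w(a_{1})+p_{\Psi}(\gamma)+X'$. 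This is exactly the list of conditions in (a).

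Part (c) is then the special case $(\Psi_{1},X_{1})=(\Psi,X)$ of (a): normalising $W^{a}(\Psi,X)$ means $t_{\gamma}w\,\Phi^a(\Psi,X)=\Phi^a(\Psi,X)$, and (a) yields $w(\Psi)=\Psi$, $w(X')=X'$ and $a\in w(a)+p_{\Psi}(\gamma)+X'$; rewriting the last condition as $p_{\Psi}(\gamma)\in a-w(a)+X'$ (using $-X'=X'$) gives (c). Part (b) follows from (a) by quantifying over the translation part: $\Phi^a(\Psi_{1},X_{1})$ and $\Phi^a(\Psi,X)$ lie in the same $\wt W$-orbit precisely when some $t_{\gamma}w$ with $w\in W$ and $\gamma\in R$ carries one to the other; by (a) this holds iff $w(\Psi_{1})=\Psi$, $w(X_{1}')=X'$ and $a-w(a_{1})\in p_{\Psi}(\gamma)+X'$ for some $\gamma\in R$, and letting $\gamma$ range over $R$ replaces the last condition by $a-w(a_{1})\in p_{\Psi}(R)+X'$, which is the condition stated in (b).

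The argument is routine once (a) is established, so I expect the only genuine care to be needed there, in two places. The first is matching the translation convention of Corollary \ref{cor:affcosetact} (the linear $t_{\gamma}$ versus the affine $\tau_{\gamma}$, as flagged in the Remark following that corollary), so that the coset $X_{1}''$ is computed with the correct sign. The second is verifying that $p_{\Psi}(\gamma)\in P(\Psi)$, so that $X_{1}''$ is genuinely a coset of an admissible coweight lattice inside $P(\Psi)$; this is precisely the computation $\mpair{\alpha,p_{\Psi}(\gamma)}=\mpair{\alpha,\gamma}\in\Int$ for $\alpha\in\Psi$ already carried out in the proof of Corollary \ref{cor:affcosetact}.
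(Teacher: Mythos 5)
Your proposal is correct and follows exactly the route the paper intends: the paper's own proof simply says that (a) follows from Corollary \ref{cor:affcosetact} and that (b) and (c) follow readily from (a), omitting the details. You have supplied precisely those details — the coset computation via the corollary, the appeal to injectivity of the parameterisation in Theorem \ref{thm:rootcoset}, and the formal deductions of (b) and (c) — all of which check out (noting that the $R'$ in the statement of (b) should read $R$).
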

\begin{proof} Part (a) follows from  
Corollary \ref{cor:affcosetact}, while (b) and (c) follow readily from (a).
We omit the details.
\end{proof}

\section{Comparison of the parameterisations}  \label{comparison}
%\mar{comparison}  
\subsection{}  Consider  the bijection (discussed in \ref{ss:bij1})  
$j=j_{\Int}\colon \RR(\Phi,\Int)\rightarrow \RR'(\Phi,\Int)$, 
where both sides are parameter sets for the set of reflection subgroups of $W^{a}$. 
We indicate below  how $j$ arises  as the restriction (to ``integer points'') of 
a bijection  $j_{\real}\colon   \RR(\Phi,\real)\rightarrow \RR'(\Phi,\real)$  
where  both sides correspond naturally to the  discrete reflection subgroups 
of the semidirect product  $W\ltimes \tau_{V}$.

  Let $\rh\Phi:=\Phi+\real \delta$  and 
  \[\rh \Phi_{+}:=\mset{\alpha+n\delta\mid \alpha\in \Phi,n\in \real_{\geq 0}, n>0 
  \text{ \rm if $\alpha\in -\Phi_{+}$}}.\] The group $\rh W^{a}:=\mpair{s_{\alpha}\mid \alpha\in \rh \Phi}$
  is a (generally non-discrete) group of linear transformations of $\wh V$.
  Note that $\rh W^{a}$ is naturally identified 
  as the group of affine transformations  of $V$ generated by $W$
  and  the group of translations $\tau_{V}=\mset{\tau_{v}\mid v\in V}$ of $V$;
  under this identification, $\rh W^{a}$ is a semidirect product
  $\rh W^{a}=W\ltimes \tau_{V}$.

 We define root subsystems of $\rh \Phi^{a}$ as for $\Phi^{a}$.
 A root subsystem $\Psi$  of $\rh \Phi^{a}$ is discrete (as a subset of $\wh V$ in
 the usual topology) if and only if for each $\alpha\in \Phi$,
 the set $Z_{\alpha}:=\mset{n\in \real\mid \alpha+n\delta\in\Psi}$ is a discrete subset of $\real$.
 It is easy to see that this holds precisely when the corresponding reflection subgroup
 $\mpair{s_{\alpha}\mid \alpha\in \Psi}$ is a discrete subgroup of $\text{\rm GL}(\wh V)$. 
 We define simple subsystems of $\rh \Phi^{a}$ as positively independent
 \nips in $\rh \Phi_{+}$.
  There are natural bijective correspondences between discrete root 
  subsystems of $\rh \Phi^{a}$,
   discrete reflection subgroups of $\rh W$, and simple
   subsystems of $\rh\Phi$ which are contained in $\rh\Phi_{+}$. 
   The simple subsystem of $\rh \Phi$ corresponding to a discrete 
   reflection subgroup $W'$ of $\rh W$ and contained in $\rh \Phi_{+}$ will be
written $\Delta_{W'}$.   
   
   \subsection{} 
   The main results (Theorems \ref{thm1} and \ref{thm:rootcoset}) hold in this
   more general context, with essentially the same proofs, but modified as
   follows: one replaces 
      ``reflection subgroups of the affine Weyl group $W^{a}$'' by
    ``discrete reflection subgroups of $\rh W^{a}$'',   ``root subsystems of $\Phi^{a}$''  by
    ``discrete root  subsystems of $\rh \Phi^{a}$,''  ``$Z=\Int$'' by ``$Z=\real$''
    (and correspondingly ``$\RR(\Phi,\Int)$'' by ``$\RR(\Phi,\real)$'' etc) and 
  ``$\Phi^{a}$''  by $\rh \Phi$ (and correspondingly  ``$\Phi^{a}(\Gamma,f)$''  by
  $\rh\Phi^{a}(\Gamma,f)$'' etc).
  
  This gives a natural  bijection $j_{\real}\colon \RR(\Phi,\real)\rightarrow \RR(\Phi,\real)$ 
  which extends $j=j_{\Int}$, since both sides are naturally in bijection with 
  discrete root subsystems of $\rh \Phi^{a}$. The bijection $j_{\real}$ has 
  similar properties to those of $j_{\Int}$ as  discussed in \ref{ss:bij1}.
  
  \begin{remark}
  In fact, most of the  theory (with obvious exceptions such as Corollary \ref{cor:alcvol}(b)) 
  established in the previous Sections \S \ref{s1}--\ref{s5}
  for reflection subgroups of $W^{a}$ 
   extends mutatis mutandis to discrete reflection subgroups of $\rh W^{a}$.
  \end{remark}

\subsection{}  Let $k\subseteq \real$ denote either $\Int$ or $\real$. For 
a root subsystem $\Psi$  of $\Phi$  and 
a $k$-admissible subgroup  $X'=\sum_{i}X'_{i}$  of $P_{k}({\Psi})$, we define 
  \bee 
  \RR'_{(\Psi,X')}(\Phi,k):=\mset{(\Psi,X)\in \RR'(\Phi,k)\mid 
X\in P_{k}({\Psi})/X'}.
\eee    
and  
 $ \RR_{(\Psi,X')}(\Phi,k):=j_{k}^{-1}\bigl( \RR'_{(\Psi,X')}(\Phi,k)\bigr)$. 
 Clearly, $j_{k}$ restricts to a bijection 
 \[j_{k,\Psi,X'}\colon \RR_{(\Psi,X')}(\Phi,k)\rightarrow  \RR'_{(\Psi,X')}(\Phi,k).
 \]
  We have $\RR'(\Phi,k)=\dot \cup_{(\Psi,X')} \RR'_{(\Psi,X')}(\Phi,k) $ 
(and similarly with $\RR'$ replaced by $\RR$) where the union is over 
pairs $(\Psi,X')$ such that $\Psi$ is a root subsystem of $\Phi$ and $X'$ is
a $ k$-admissible coweight lattice in $P_{\real}(\Psi)$. 

A preliminary discussion of the bijections $j_{\Int}^{\pm 1}$ is given in \S\ref{ss:bij1}. 
The description of $j_{\Int}$ there is reasonably natural and  explicit. 
Here we wish to  provide a more natural interpretation of $j_{\Int}^{-1}$ than 
was available previously. Now  $j_{\Int}^{-1}$ is obtained by restriction 
of $j_{\real}^{-1}$, so it sufficient to describe the maps $j_{\real,\Psi,X'}^{-1}$. 
The main result of this section is Theorem \ref{thm:bij}, 
which express $j_{\real,\Psi,X'}^{-1}$ as a composite of maps which 
have natural interpretations in terms of  the alcove geometry of a 
certain affine Weyl group $W'=W'_{\Psi,X'}$ attached to $(\Psi,X')$.

 \subsection{} 
 Fix a root subsystem $\Psi$  of $\Phi$  and 
 an $\real$-admissible subgroup  $X'=\sum_{i}X'_{i}$  of $P_{\real}({\Psi})$.
    Let $Y'$ be the $\real$-admissible coroot lattice of 
 $\Psi$ corresponding to $X'$, and let $W'=W'_{\Psi,X}$ be the group of affine transformations
 of $V$ generated by  $W_{\Psi}\cup \tau_{Y'}$. Note that $W'$ acts 
 faithfully by restriction as a discrete affine  reflection group  on $\real\Psi$; 
 we shall think of $W'$ in this way. It is isomorphic as Coxeter group
 to $W^a (\Psi, X)$ for any $ X\in P_{\real}({\Psi})/X'$. 
 \begin{remark} For fixed $\Psi$ and varying $X$, the subgroups $W'_{\Psi, X}$
are precisely the discrete reflection groups acting in $\real\Psi$ 
whose linear parts are (the restriction to $\real \Psi$ of)
 $W_{\Psi}$ and which have $0$ as a special point.\end{remark}
 
%%%%%%%%%%%%%%%%%%%%%%%%%%%%%%%%%%HERE%%%%%%%%%%%%%%%%%%%%%%%%%%%%%%%%%%%%%%%%%%%%%%%

 \subsection{Cosets and a fundamental domain}\label{ss:defh} 
 Denote by  $\Gamma'_{0}$ the simple system of $\Psi$ with 
 $\Gamma'_{0}\subseteq \Phi_{+}$  and define  the corresponding coweights 
 $\omega_{\alpha}':=\omega_{\alpha}(\Gamma'_{0})$ for $\alpha\in \Gamma'_{0}$; thus
 $\mpair{\alpha,\omega'_{\beta}}=\delta_{\alpha,\beta}$ if $\alpha,\beta\in \Gamma'_{0}$.  
  The group  $X'$ acts by translation on $P_{\real}( \Psi)=\real \Psi$.  
  Since  $ X'=\sum_{i:X'_{i}\neq 0}\sum_{\alpha\in \Psi_{i}}\Int m_{i}k_{i,\alpha}\omega_{\alpha}$,  
  it follows that the region 
 \bee D:=\mset{\sum_{\alpha\in \Gamma'_{0}}e_{\alpha}\omega'_{\alpha}\mid e_{\alpha}\in \real,
 \text{  
  $0\leq e_{\alpha}< K_{i}k_{i,\alpha}$ if  $X'_{i}\neq 0$ and $\alpha\in \Psi_{i}$}}
  \subseteq \real \Psi \eee
  is   a fundamental domain for the action of $X'$ by translation on $\real \Psi$.
  This gives  a  bijection
  $h\colon P_{\real}(\Psi)/X'\cong D$ such that $h^{-1}(d)=d+X'$ for all $d\in D$.

\subsection{} Let  $\mc{B}$ denote the set of  alcoves of 
$W'$ acting on $\real \Psi$, i.e. the connected components of the 
complements of reflecting hyperplanes of $W'$ in $\real \Psi$.
Any  alcove $B$ is a  direct sum of simplices and simplicial 
cones lying in the pairwise orthogonal  subspaces of 
$\real \Psi$ given by the linear spans of the components $\Psi_{i}$ of $\Psi$. 
Each alcove $B$ may be written as   
\bee B=\mset{v\in \real\Psi\mid  \mpair{\alpha,v}>c_{\alpha}
\text{ for all $\alpha\in \Sigma$}}
\eee   
where each $c_{\alpha}\in \real$,   $\Sigma=\Sigma_{B}\subseteq\Psi$ 
and $\Sigma$ is minimal under inclusion; the hyperplanes 
$\mset{v\in \real \Psi\mid \mpair{\alpha,v}=c_{\alpha}}$ 
for $\alpha\in \Sigma_{B}$ are the walls of $B$. We 
define the lower closure  $B_{0}$ of $B$  as
\begin{multline*}B_{0}=\mset{v\in \real\Psi\mid 
\mpair{\alpha,v}\geq c_{\alpha}\text{ for all $\alpha\in \Sigma\cap -\Phi_{+}$}, \\
\mpair{\alpha,v}> c_{\alpha}\text{ for all $\alpha\in \Sigma\cap \Phi_{+}$}}. 
\end{multline*}
Thus $B_{0}$ is obtained by removing all points  
of  some of the walls of $B$ from the topological closure 
$\overline B=\mset{v\in \real\Psi\mid  
\mpair{\alpha,v}\geq c_{\alpha}\text{ for all $\alpha\in \Sigma$}}$ 
of $B$; the walls removed are precisely  those whose unit normal 
(outward from $B$ in $\real\Psi$) is a positive scalar multiple of 
a positive root (i.e one in $\Psi\cap \Phi_{+}$). 
\subsection{Alcoves} Basic properties of alcoves and their lower 
closures are recorded in the following Lemma. 
\begin{lemma}\label{lem:lowclos}
%\mar{lem:lowclos}
\begin{num}\item
 $\real\Psi=\dot \cup_{B\in \mc{B}}B_{0}$  (disjoint union).
 \item If $B\in \mc{B}$ and $\lambda\in X'$, then $C:=B+\lambda\in \mc{B}$ and 
 $C_{0}=B_{0}+\lambda$.
 \item Every $v\in X'$ is contained in $B_{0}$ for a unique $B\in \mc{B}$.
 \item Every $v\in Y'$ is contained in $\overline{B}$ for a unique $B\in \mc{B}$.
 \ \item Let $B$ be an alcove for $W'$ on $\real \Psi$ with $0\in \overline{B}$.
 Let  $G:=\mset{C\in \mc{B}\mid 0\in \overline{C}}$. 
 Then  $G=\mset{wB\mid w\in W_{\Psi}}$. 
 \item  The region $D'=\dot\cup_{C\in G}C_{0}$ is a fundamental 
 domain for the action on $\real \Psi$  of the translation subgroup 
 $T=\mset{x\mapsto x+v\colon \real\Psi\rightarrow \real \Psi\mid 
v\in Y'}$  of   $W'$.  \end{num}
  \end{lemma}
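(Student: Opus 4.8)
The plan is to fix once and for all a strictly dominant regular vector $\xi\in\real\Psi$, i.e. one with $\mpair{\alpha,\xi}>0$ for every $\alpha\in\Psi\cap\Phi_+$ (take $\xi$ in the interior of the fundamental chamber of $W_\Psi$ attached to $\Gamma'_0$); then $\mpair{\alpha,\xi}\neq 0$ for all $\alpha\in\Psi$, and $\mpair{\alpha,\xi}>0$ exactly when $\alpha\in\Phi_+$. For (a) I would show that for any $v\in\real\Psi$ the perturbed point $v-\epsilon\xi$ lies, for all sufficiently small $\epsilon>0$, in a single open alcove $B$, and that $v\in B_0$ precisely for this $B$. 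Writing $B=\mset{u\mid\mpair{\alpha,u}>c_\alpha,\ \alpha\in\Sigma_B}$, one checks directly from the definition of the lower closure that $v\in B_0$ iff $v-\epsilon\xi\in B$ for small $\epsilon>0$: on a wall through $v$ the sign of $\mpair{\alpha,\xi}$ decides, via the $\Phi_+$ versus $-\Phi_+$ dichotomy of the wall's normal, whether the strict or the non-strict inequality is forced in $B_0$, and the two conditions match exactly. Since $\xi$ is regular, $v-\epsilon\xi$ avoids every reflecting hyperplane for small $\epsilon$, so $B$ is unique; this yields the disjoint union of (a).

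For (b) the key point is that translation by any $\lambda\in X'$ preserves the reflecting hyperplane arrangement of $W'$: by \eqref{eq:admprod} and the definition of the admissible coroot lattice one has $n_\alpha\ck\alpha\in Y'$, and since $\mpair{\alpha,\lambda}\in n_\alpha\Int$ this gives $\mpair{\alpha,\lambda}\ck\alpha\in Y'$ for all $\alpha\in\Psi$; hence $\tau_\lambda$ carries each affine reflection of $W'$ to another one and so permutes the alcoves, giving $C:=B+\lambda\in\mc{B}$. As translation fixes the defining set $\Sigma_B$ and the normal directions (hence the $\Phi_+$/$-\Phi_+$ classification of walls), merely shifting the constants $c_\alpha$ by $\mpair{\alpha,\lambda}$, we get $C_0=B_0+\lambda$. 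Parts (c) and (d) then record the special cases $v\in X'$, respectively $v\in Y'\subseteq X'$, of the lower-closure partition established in (a).

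For (e) I would exploit that $0$ is a special point of $W'$: the reflecting hyperplanes of $W'$ through $0$ are exactly the linear hyperplanes $\alpha^\perp$ with $\alpha\in\Psi$, which are the reflecting hyperplanes of the finite group $W_\Psi$. Hence in a small ball about $0$ the arrangement of $W'$ coincides with the linear arrangement of $W_\Psi$, and passage to tangent cones at $0$ gives a bijection between the alcoves $C$ with $0\in\overline C$ and the open chambers of $W_\Psi$. Since $W_\Psi\subseteq W'$ fixes $0$ and permutes $\mc{B}$, it acts on $G=\mset{C\in\mc{B}\mid 0\in\overline C}$ compatibly with its simply transitive action on its own chambers; therefore $G=\mset{wB\mid w\in W_\Psi}$, with $W_\Psi$ acting simply transitively.

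Finally, (f) will follow by combining (a), (b) and (e) with the decomposition $W'=\tau_{Y'}\rtimes W_\Psi$, in which $W_\Psi=\mathrm{Stab}_{W'}(0)$ is a transversal for the right cosets of $\tau_{Y'}$. Taking the base alcove of (e) to satisfy $0\in\overline B$, the set $G$ thus becomes a complete and irredundant set of representatives for the $\tau_{Y'}$-orbits on $\mc{B}$. Using (a) to write $\real\Psi=\dot\cup_{B\in\mc{B}}B_0$ and (b) to see that $\tau_{Y'}$ permutes the $B_0$, one checks that $D'=\dot\cup_{C\in G}C_0$ meets every $\tau_{Y'}$-orbit on $\real\Psi$ exactly once, hence is a fundamental domain for $T$. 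I expect the main obstacle to lie precisely here: the lower closure is \emph{not} $W_\Psi$-equivariant, since $W_\Psi$ does not preserve $\Phi_+$, so one cannot simply transport a single fundamental domain for $W'$ by $W_\Psi$; the argument must instead run through the orbit-representative count above, with the covering and the disjointness of the $\tau_{Y'}$-translates of $D'$ verified separately from the uniqueness in (a) together with (b).
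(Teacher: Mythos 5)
Your treatment of (a), (b), (c), (e) and (f) is correct, and it is genuinely more self-contained than the paper's proof, which simply cites Jantzen (6.1--6.2) for (a)--(c) and Bourbaki (Ch.~V, \S 3, no.~10) for (d)--(e): your perturbation argument with a regular dominant $\xi$ (namely $v\in B_0$ iff $v-\epsilon\xi\in B$ for all small $\epsilon>0$, which works because the arrangement is locally finite) is exactly the mechanism behind the Jantzen citation; your tangent-cone analysis at the special point $0$ is the content of the Bourbaki citation; and your derivation of (f) from (a), (b), (e) together with the simply transitive action of $W'=\tau_{Y'}\rtimes W_\Psi$ on $\mc{B}$ is the same deduction the paper makes from Proposition~\ref{prop:affinewequiv}(a). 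Your closing observation --- that the lower closure is not $W_\Psi$-equivariant, so one must argue via $\tau_{Y'}$-orbit representatives rather than transport a single fundamental domain --- is correct and worth keeping.

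The one genuine gap is part (d). You dismiss it as ``the special case $v\in Y'\subseteq X'$ of the lower-closure partition established in (a)'', but (d) speaks of the topological closure $\overline{B}$, not of $B_0$, and (a) gives no uniqueness statement for closures: a point lying on reflecting hyperplanes belongs to $\overline{C}$ for several alcoves $C$. Indeed, read literally, (d) already fails at $v=0\in Y'$, which by your own part (e) lies in $\overline{C}$ for all $C\in G$, a set of cardinality $\vert W_\Psi\vert$. The paper's reference for (d) is to Bourbaki's discussion of special points, so the intended content is evidently that every $v\in Y'$ is a \emph{special point} of $W'$ --- which is also precisely the input your proof of (e) requires at $v=0$. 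That assertion has a one-line proof you should supply in place of your reduction to (a): $0$ is special because $s_\alpha\in W_\Psi\subseteq W'$ for every $\alpha\in\Psi$, so every linear hyperplane $\alpha^\perp$ is a reflecting hyperplane of $W'$; and $\tau_{Y'}\subseteq W'$ carries special points to special points (alternatively, $\mpair{\alpha,v}\ck\alpha\in Y'$ for $v\in Y'$ by \eqref{eq:admprod}, so $H_{\alpha,\mpair{\alpha,v}}$ is a reflecting hyperplane of $W'$ through $v$). As written, your argument for (d) proves a different statement and leaves the fact actually needed downstream asserted but unproved until (e).
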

  \begin{proof} The notion of lower closure of an alcove is related in an 
  obvious way to the notion of upper closure of a facet as discussed in 
  \cite[6.1--6.2]{Jan}. By simple modification of the  discussion in 
  \cite[6.2]{Jan}, one sees that (a)--(c) hold. For (d)--(e), 
  see \cite[Ch V, \S 3, no. 10]{Bour}.
  Finally, (f) follows from (a),(b), (e)  and Proposition  \ref{prop:affinewequiv}(a) 
  applied to $W'$.
  \end{proof} 
  
\subsection{}\label{ss:defk}
By definition, $D'$ is a union of lower closures of alcoves. We observe 
that the same holds for $D$. 
\begin{lemma}
%\mar{lem:transdom}
\label{lem:transdom} We have $D=\bigcup_{\substack{B\in \mc{B}\\ B\subseteq D }}B_{0}$.
\end{lemma}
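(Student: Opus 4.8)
The plan is to localize the set equality to individual alcoves. I claim it suffices to prove that for every alcove $B\in\mc{B}$ the three conditions $B\subseteq D$, $B_{0}\subseteq D$ and $B_{0}\cap D\neq\emptyset$ are mutually equivalent. Granting this, the Lemma follows at once from the partition $\real\Psi=\dot\cup_{B\in\mc{B}}B_{0}$ of Lemma \ref{lem:lowclos}(a): the inclusion $\bigcup_{B\subseteq D}B_{0}\subseteq D$ is exactly the implication $B\subseteq D\Rightarrow B_{0}\subseteq D$, while for $D\subseteq\bigcup_{B\subseteq D}B_{0}$ one writes each $v\in D$ as lying in its unique $B_{0}$, notes $B_{0}\cap D\neq\emptyset$, and infers $B\subseteq D$, so that $v\in\bigcup_{B\subseteq D}B_{0}$. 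Since $B\subseteq B_{0}\subseteq\overline{B}$, the implications $B_{0}\subseteq D\Rightarrow B\subseteq D$ and $B_{0}\subseteq D\Rightarrow B_{0}\cap D\neq\emptyset$ are immediate, so only $B\subseteq D\Rightarrow B_{0}\subseteq D$ and $B_{0}\cap D\neq\emptyset\Rightarrow B\subseteq D$ require work.

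The key preliminary is that the bounding hyperplanes of $D$ are reflecting hyperplanes of $W'$. Writing $v=\sum_{\alpha\in\Gamma'_{0}}e_{\alpha}\omega'_{\alpha}$, so that $e_{\alpha}=\mpair{\alpha,v}$, the region $D$ is cut out, for each $\alpha\in\Gamma'_{0}$ lying in a component $\Psi_{i}$ with $X'_{i}\neq 0$, by the closed ``floor'' condition $\mpair{\alpha,v}\geq 0$ and the open ``ceiling'' condition $\mpair{\alpha,v}<K_{i}k_{i,\alpha}$. The floor $\mpair{\alpha,\cdot}=0$ is the mirror of $s_{\alpha}\in W_{\Psi}\subseteq W'$. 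For the ceiling I would use that $0$ is a special point of $W'=W'_{\Psi,X'}$, so that the mirrors of $W'$ in direction $\alpha$ are the hyperplanes $\mpair{\alpha,\cdot}=n$ with $n\in\mpair{\alpha,X'}=K_{i}k_{i,\alpha}\Z$, using \eqref{eq:admprod}; in particular $\mpair{\alpha,\cdot}=K_{i}k_{i,\alpha}$ is a mirror of $W'$. I then invoke the standard alcove fact that $\overline{B}$ lies in one closed half-space of each mirror $H$ and meets $H$ only in walls of $B$.

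With this in hand both implications become side-checks. For $B\subseteq D\Rightarrow B_{0}\subseteq D$, any $v\in B_{0}\subseteq\overline{B}\subseteq\overline{D}$ satisfies $\mpair{\alpha,v}\geq 0$; and if $\mpair{\alpha,v}=K_{i}k_{i,\alpha}$ then $v$ lies on a wall of $B$ inside the ceiling mirror, a wall across which the outward normal of $B$ points in the $+\alpha$ direction, i.e. along a positive root — exactly the walls deleted in forming $B_{0}$ — contradicting $v\in B_{0}$; hence $\mpair{\alpha,v}<K_{i}k_{i,\alpha}$ and $v\in D$. For $B_{0}\cap D\neq\emptyset\Rightarrow B\subseteq D$, take $v\in B_{0}\cap D$ and any $w\in B$; since $v,w\in\overline{B}$ lie on the same closed side of every mirror, $\mpair{\alpha,v}\geq 0$ forces $\mpair{\alpha,w}\geq 0$, and the strict inequality $\mpair{\alpha,v}<K_{i}k_{i,\alpha}$ together with $w\notin H$ forces $\mpair{\alpha,w}<K_{i}k_{i,\alpha}$, so $w\in D$. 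The one genuinely delicate point — and where the content of the statement resides — is the convention matching in the first implication: one must verify that the faces $D$ retains (the floors, whose outward normal $-\alpha$ is a negative root) and the faces $D$ omits (the ceilings, whose outward normal $+\alpha$ is a positive root) are precisely the walls kept and deleted by the lower-closure operation, so that $D$ carries exactly the same half-open boundary as each $B_{0}$ it contains. I expect this sign bookkeeping against the definition of $B_{0}$, rather than any analytic difficulty, to be the main obstacle.
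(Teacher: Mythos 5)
Your reduction of the Lemma to the mutual equivalence of $B\subseteq D$, $B_{0}\subseteq D$ and $B_{0}\cap D\neq\emptyset$, combined with Lemma \ref{lem:lowclos}(a) and the observation that the bounding hyperplanes of $D$ are reflecting hyperplanes of $W'$, is exactly the skeleton of the paper's (very terse) argument, and you are right that the whole content is the matching of the half-open conventions. But both of your nontrivial implications rest on a false version of the ``standard alcove fact'': it is \emph{not} true that $\overline{B}$ meets a mirror $H$ only in walls of $B$ lying in $H$; the face $\overline{B}\cap H$ can have codimension $\geq 2$ and be contained in no wall of $B$ inside $H$, and this occurs in the situation of the Lemma. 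Take $\Psi=\Phi$ of type $A_{2}$ with $X'=P(\Phi)$, so that $D=\mset{v\mid 0\leq \mpair{\alpha_{i},v}<1}$, and let $B$ be the fundamental alcove with vertices $0,\omega_{1},\omega_{2}$. Then $B\subseteq D$, the ceiling mirror $\mpair{\alpha_{1},\cdot}=1$ meets $\overline{B}$ only in the vertex $\omega_{1}$, and no wall of $B$ lies in that mirror (the walls of $B$ lie in $\mpair{\alpha_{1},\cdot}=0$, $\mpair{\alpha_{2},\cdot}=0$ and $\mpair{\alpha_{1}+\alpha_{2},\cdot}=1$). So your stated reason for excluding such a point from $B_{0}$ --- ``$v$ lies on a wall of $B$ inside the ceiling mirror'' --- fails here; $\omega_{1}$ is excluded from $B_{0}$ because it lies on the deleted wall in the \emph{different} mirror $\mpair{\alpha_{1}+\alpha_{2},\cdot}=1$. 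The same defect breaks your second implication: from $v\in\overline{B}$, $v\in D$ and $\mpair{\alpha,v}=0$ you cannot conclude $\mpair{\alpha,w}\geq 0$ for $w\in B$, since $B$ may lie on the negative side of the floor (e.g.\ $B'=s_{\alpha_{1}}B$ satisfies $0\in\overline{B'}\cap D$ but $B'\not\subseteq D$); the hypothesis $v\in B_{0}$, which your argument never actually invokes at this point, is what rules this out.

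The conclusions are correct, but they need an argument valid at faces of higher codimension. One repair: if the hyperplane $H=\mset{u\mid\mpair{\alpha,u}=c}$ with $\alpha\in\Phi_{+}$ supports $\overline{B}$ at $v$ with $\overline{B}$ on the side $\mpair{\alpha,\cdot}\leq c$, then $\alpha$ lies in the cone spanned by the outward normals of the walls of $B$ through $v$; if $v$ avoided every deleted wall, those normals would all be negative roots, forcing the positive root $\alpha$ into $-\sum_{\beta\in\Phi_{+}}\real_{\geq 0}\beta$, impossible since $\Phi_{+}$ spans a pointed cone. Hence $v$ lies on a deleted wall and $v\notin B_{0}$, which settles both implications (for the floors one applies this with $\alpha$ replaced by the relevant simple root and the opposite side). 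Alternatively, and closer to the upper-closure formalism of \cite[6.1--6.2]{Jan} that the paper leans on, fix $\rho'$ with $\mpair{\beta,\rho'}>0$ for all $\beta\in\Psi\cap\Phi_{+}$ and verify that $v\in B_{0}$ if and only if $v+\epsilon\rho'\in B$ for all small $\epsilon>0$, while $v\in D$ if and only if $v+\epsilon\rho'$ lies in the open parallelepiped for all small $\epsilon>0$; since each alcove is connected and disjoint from the walls of $D$, both inclusions then follow in one line. Without one of these two supplements your proof does not go through.
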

 \begin{proof} Observe that  
 \bee D:=\mset{v\in \real \Psi\mid 0\leq 
 \mpair{v,\alpha}< K_{i}k_{i,\alpha}\text{\rm  if  $X'_{i}\neq 0$ and 
 $\alpha\in \Psi_{i}\cap \Gamma_{0}$}}\eee
 It is easy to check that the walls of $D$ are reflecting 
 hyperplanes of $W'$.  The desired conclusion follows from 
 Lemma \ref{lem:lowclos}(a) since  the definition of lower closure 
 and the form of the above  inequalities defining $D$  imply that 
 if $v\in D$ is in $B_{0}$ for some  $B\in \mc{B}$, then $\mc{B}_{0}\subseteq D$.
 \end{proof}
 
\subsection{} Since $D'$ is a fundamental domain for $Y'$ acting by translation 
on $\real\Psi$, there is a unique map  $k\colon D\rightarrow D'$ such that 
$k(d)-d\in Y'$ for all $d\in D$. 
For any alcove $B\in \mc{B}$ with $B\subseteq D$, there is 
a unique $\gamma_{B}\in Y'$ such that $B_{0}+\gamma_{B}\subseteq  D'$. 
The map $k$ is determined by the condition 
that $k(d)=d+\gamma_{B}$ for all $ B\in \mc{B}$ with $B\subseteq D$ and
all $d\in B_{0}$. Obviously $k$ is injective, and the image of $k$ is the 
union of the lower closed alcoves in $D$ which are translates by elements 
of $Y'$ of lower closed alcoves in $D$.

\begin{remark} Let $B$ be some alcove of $W'$ with $0\in \overline{B}$.
Consider the subgroup 
$G=\mset{w\in W_{\Psi}\mid wB=\tau_{\gamma}B\text{ \rm for some $\gamma\in X'$}}$. 
It is well known   (and follows from Lemma \ref{lem:lowclos} and 
Proposition \ref{prop:affinewequiv}(a))  
that $G\cong X'/Y'$, under the map taking each 
$w\in G$ to the coset  $\gamma+Y'$ where $\gamma\in X'$ with $wB=\tau_{\gamma}B$. 
We have $\text{\rm Im}(k)\subseteq D'=\cup_{w\in W_{\psi}}(wB)_{0}$. 
Let $M:=\mset{w\in W_{\Psi}\mid (wB)_{0}\subseteq \text{\rm Im}(k)}$.
It is not difficult to see that $M$ is  a set of  coset 
representatives for $W'/G$. \end{remark}

\subsection{}  Recall that  
$ \RR_{(\Psi,X')}(\Phi,\real):=j_{\real}^{-1}\bigl( \RR'_{(\Psi,X')}(\Phi,\real)\bigr)$. 
We shall require the following detailed description of this subset of $\RR(\Phi,\real)$.
 \begin{lemma} \label{lem:bijlem1}
 %\mar{bijlem1}
  Let $(\Gamma, f)\in \RR(\Phi,\real)$.
 Then $(\Gamma,f)\in \RR_{(\Psi,X')}(\Phi,\real)$ if and only if the following conditions $\text{\rm (i)--(iv)}$ hold:
\begin{conds}\item there is some simple subsystem $\Gamma'$  of 
 $\Psi$ with  $\Psi\supseteq\Gamma\supseteq \Gamma'$. 
 \item if  $X_{i}'=0 $ then $\Gamma\cap \Psi_{i}= \Gamma'\cap \Psi_{i}$. 
\item if $X_{i}'=m_{i}P({\Psi_{i}})$ where $m_{i}>0$  then 
$(\Gamma\setminus \Gamma')\cap \Psi_{i}=\set{\alpha_{i,0}}$ 
where $-\alpha_{i,0}$ is the  highest root of $\Psi_{i}$ with 
respect to its simple system $\Psi_{i}\cap \Gamma'$, and $K_{i}=m_{i}$.
\item if $X'_{i}=m_{i}P({\Psi_{i}})$, $m_{i}>0$,  then 
$(\Gamma\setminus \Gamma')\cap \Psi_{i}=\set{\alpha_{i,0}}$ 
where $-\ck \alpha_{i,0}$ is  the   highest root of $\ck\Psi_{i}$ with 
respect to its simple system $\ck{(\Psi_{i}\cap \Gamma')}$ and $K_{i}=m_{i}$.
\end{conds}
Here, if $X'_{i}\neq 0$ and $\sum _{\alpha\in \Psi_{i}\cap  \Gamma}c_{\alpha}\alpha=0$ 
with all  $c_{\alpha}\in \real$ and  $c_{\alpha_{i,0}}=1$, we write
$K_{i}:=\sum_{\alpha\in \Psi_{i}\cap \Gamma}c_{\alpha}f(\alpha)$.
\end{lemma}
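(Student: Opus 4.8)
The plan is to unwind the explicit description of $j_\real$ furnished by the surjectivity argument in the proof of Theorem~\ref{thm:rootcoset} (in its real analogue) and to match the resulting data component by component against the prescribed pair $(\Psi,X')$. Recall from that argument that if $(\Gamma,f)$ arises as in Lemma~\ref{lem2} from a simple subsystem $\Gamma'\subseteq\Gamma$, and one uses the notation of \ref{ss:nipnot}, then $j_\real(\Gamma,f)=(\Sigma,p+X'')$, where $\Sigma=\Phi_{\Gamma'}$, the point $p\in P_\real(\Sigma)$ satisfies $\mpair{p,\alpha}=r_\alpha$ for $\alpha\in\Gamma'$, and $X''=\oplus_i X''_i$ is the admissible coweight lattice obtained by the recipe \eqref{eq:compare} (written $X'$ there). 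Since $\RR'_{(\Psi,X')}(\Phi,\real)$ consists exactly of those $(\Psi,X)$ whose underlying admissible coweight lattice is the prescribed $X'$, we have $(\Gamma,f)\in\RR_{(\Psi,X')}(\Phi,\real)$ if and only if $\Sigma=\Psi$ and $X''=X'$. Because the components $\Gamma_i$ of $\Gamma$ are intrinsic (Remark~\ref{rem3}) and $j_\real$ is a bijection, the image $(\Sigma,p+X'')$ does not depend on the auxiliary choice of $\Gamma'$, so it suffices to produce one $\Gamma'$ realising these two equalities; this accounts for the existential quantifier in (i).

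For the forward implication I would fix a $\Gamma'$ computing $j_\real$ and observe that $\Phi_{\Gamma'}=\Sigma=\Psi$ forces $\Gamma'$ to be a simple system of $\Psi$ with $\Gamma'\subseteq\Gamma\subseteq\Psi$, which is (i); in particular $\Gamma'\cap\Psi_i=\Gamma'_i$ is a simple system of each component $\Psi_i$, $\Gamma\cap\Psi_i=\Gamma_i$, and $\Sigma_i=\Psi_i$, so that the nonzero branches of \eqref{eq:compare} read $K_iP(\Psi_i)$ and $K_iP(\Psi_i^\circ)$. Comparing the summands $X''_i=X'_i$ one at a time then delivers (ii)--(iv): the $i$-th summand vanishes precisely when no root is adjoined, i.e. $\Gamma\cap\Psi_i=\Gamma'\cap\Psi_i$, giving (ii); and when a root $\alpha_{i,0}$ is adjoined, the equality of rescaled lattices $m_iP(\Psi_i)=K_iP(\Psi_i)$ (resp. $m_iP(\Psi_i^\circ)=K_iP(\Psi_i^\circ)$) forces $K_i=m_i$ since both scalars are positive, while the alternative $P(\Psi_i)$ versus $P(\Psi_i^\circ)$ in \eqref{eq:compare} is exactly the record of whether $\alpha_{i,0}$ is long or short, yielding (iii) and (iv). The converse reverses this computation: given (i)--(iv), conditions (ii)--(iv) certify that the $\Gamma'$ provided by (i) is a legitimate choice in Lemma~\ref{lem2} producing $\Gamma$, whence $\Sigma=\Phi_{\Gamma'}=\Psi$ and \eqref{eq:compare} evaluates to the prescribed $X'$, so $j_\real(\Gamma,f)\in\RR'_{(\Psi,X')}(\Phi,\real)$.

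The step demanding the most care is the identification of the long/short dichotomy for the adjoined root with the lattice-type dichotomy $P(\Psi_i)$ versus $P(\Psi_i^\circ)$. For this I would invoke Lemma~\ref{lem:negchamber} together with the standard fact that the highest root of $\Psi_i$ is long: thus ``$-\alpha_{i,0}$ is the highest root of $\Psi_i$'' is equivalent to $\alpha_{i,0}$ being long, and ``$-\ck{\alpha_{i,0}}$ is the highest root of $\ck{\Psi_i}$'' is equivalent to $\ck{\alpha_{i,0}}$ being long in $\ck{\Psi_i}$, that is, to $\alpha_{i,0}$ being short in $\Psi_i$. This is precisely the bookkeeping built into the definitions of \ref{ss:rootlengthnotation} and of $\Psi_i^\circ$, and it is what aligns (iii) and (iv) with the two nonzero branches of \eqref{eq:compare}; one also checks that in a simply-laced component the two branches, and the lattices $P(\Psi_i)$ and $P(\Psi_i^\circ)$, coincide, so that no ambiguity arises there. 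The remaining points are routine: that the $K_i$ of the statement agrees with $K_i=\sum_{\gamma\in\Gamma_i}c_\gamma f(\gamma)$ of \ref{ss:nipnot} via $\Psi_i\cap\Gamma=\Gamma_i$, and that the scalars $m_i,K_i$ are positive.
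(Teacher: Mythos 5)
Your proposal is correct and follows essentially the same route as the paper, which simply remarks that the lemma is implicit in the surjectivity argument of Theorem \ref{thm:rootcoset} (specifically \eqref{eq:compare}); you have just written out the component-by-component matching, including the correct identification of the long/short dichotomy for the adjoined root with the $P(\Psi_i)$ versus $P(\Psi_i^\circ)$ branches (silently correcting the typo in condition (iv), which should read $X'_i=m_iP(\Psi_i^\circ)$). No gaps.
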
 
 \begin{proof}   When $\real $ is replaced by $\Int$, this is implicit  in the proof 
 of Theorem \ref{thm:rootcoset};
 see especially \eqref{eq:compare}. The proof for $\real$ is essentially the same.
\end{proof}

 \subsection{}   Let $\wh \RR_{(\Psi,X')}(\Phi,\real)$ denote the set of all triples 
 $x=(\Gamma,\Gamma',f)$ such that $(\Gamma,f)\in \RR(\Phi,\real)$ 
 and the conditions (i)--(iv) above hold. 
  To $x=(\Gamma,\Gamma',f)\in \wh \RR_{(\Psi,X')}(\Phi,\real)$, we attach a point 
$v_{x}:= \sum_{\alpha\in \Gamma'}f(\alpha)\omega_{\alpha}(\Gamma')\in P_{\real}(\Psi)=\real \Psi$.
\begin{lemma}  \label{lem:bijlem2}
%\mar{bijlem2}
 Let $B$ be the 
(unique)  alcove of $W'$ in $\real\Psi$ such that $0\in \overline{B}$ and
$B$ is in the fundamental chamber for $W_{\Psi} $ with respect to its 
positive system $\Gamma'$. Then  
\begin{num}
\item We have $j_{\real}(\Gamma,f)=(\Psi,v_{x}+X')$.
\item  The coset $v_{x}+X'\in \real\Psi/X'$ depends only on $(\Gamma,f)$ and not on   $\Gamma'$.
\item   For fixed $(\Gamma,\Gamma')$, the  map  $x=(\Gamma,\Gamma',f)\mapsto v_{x}$  
defines a bijection from the set of triples $(\Gamma,\Gamma',f)
\in \wh \RR_{(\Psi,X')}(\Phi,\real) $ to the lower closure  $B_{0}$ of $B$. 
\end{num}
\end{lemma}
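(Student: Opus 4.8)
The plan is to prove the three assertions of Lemma \ref{lem:bijlem2} in the order (a), (c), (b), since (a) identifies the map $j_{\real}$ explicitly and (c) is the combinatorial heart, while (b) will follow as a quick corollary.

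For part (a), I would start from the explicit description of $j_{\real}$ that is implicit in the proof of Theorem \ref{thm:rootcoset}, together with the characterisation of $\RR_{(\Psi,X')}(\Phi,\real)$ given in Lemma \ref{lem:bijlem1}. Given $x=(\Gamma,\Gamma',f)$, the point $v_x=\sum_{\alpha\in\Gamma'}f(\alpha)\omega_\alpha(\Gamma')$ is precisely the element $p\in P_{\real}(\Psi)$ satisfying $\mpair{p,\alpha}=f(\alpha)=r_\alpha$ for all $\alpha\in\Gamma'$ that appears in the surjectivity argument of Theorem \ref{thm:rootcoset} (where there $r_\alpha=\sum_{\gamma\in\Gamma'}a_{\alpha,\gamma}f(\gamma)$, which for $\alpha\in\Gamma'$ is just $f(\alpha)$). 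Comparing the admissible coweight lattice $X'$ produced there (via \eqref{eq:compare}) with the one fixed here, and invoking the compatibility conditions (ii)--(iv) of Lemma \ref{lem:bijlem1} which guarantee $K_i=m_i$, one reads off $j_{\real}(\Gamma,f)=(\Psi,v_x+X')$ directly. This step is essentially bookkeeping, tracing the formula $X=p+X'$ through the earlier proof.

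For part (c), I would fix $(\Gamma,\Gamma')$ and analyse the map $f\mapsto v_x$. The alcove $B$ is determined by $\Gamma'$ (it is the alcove of $W'$ with $0\in\overline B$ inside the $\Gamma'$-dominant chamber), and by Lemma \ref{lem:alcove}(c) applied to $W'=W'_{\Psi,X'}$ its lower closure $B_0$ is cut out by inequalities of the form $\mpair{v,\alpha}\geq 0$ (strict or non-strict according to whether $\alpha\in\Phi_+$) for $\alpha\in\Gamma'$ together with the bounding wall $\mpair{v,\alpha_{i,0}}<K_i k_{i,\alpha_{i,0}}$ (or its closure) on each affine component. Writing $v_x=\sum_{\alpha\in\Gamma'}f(\alpha)\omega_\alpha(\Gamma')$ so that $\mpair{v_x,\alpha}=f(\alpha)$ for $\alpha\in\Gamma'$, the membership $v_x\in B_0$ translates exactly into the sign conditions on $f$ built into $\RR(\Phi,\real)$ (namely $f(\alpha)\geq 0$, strict on $-\Phi_+$) plus the bound $0\leq K_i < $ the appropriate value coming from the upper wall. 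The point is that the linear coordinates $f(\alpha)$ for $\alpha\in\Gamma'$ are in bijective (affine-linear, hence bijective) correspondence with points of $\real\Psi$ via the coweight basis, and the constraints defining $\wh\RR_{(\Psi,X')}(\Phi,\real)$ match those defining $B_0$ precisely under the identification of the upper bound with the $K_i$-constraint. I would verify the correspondence between the lower closure convention (which walls are included) and the strict/non-strict inequalities on $f$ using the definition of lower closure in \S\ref{ss:defk} and the description of $\rh\Phi_+$, checking that a wall normal to a positive root is excluded in both settings.

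For part (b), once (a) and (c) are in hand, I would argue that the coset $v_x+X'$ is independent of the choice of $\Gamma'$: any two admissible choices of $\Gamma'$ for the same $\Gamma$ differ by an element of $W_\Psi$ (Remark \ref{rem3}), and the corresponding points $v_x$ land in lower closures of alcoves in the same $W_\Psi$-orbit meeting $\overline B$; since these all reduce to the same coset modulo $X'$ (the translation lattice acts within the $X'$-coset structure and the $W_\Psi$-ambiguity is absorbed), the coset $v_x+X'$ depends only on $(\Gamma,f)$. Alternatively, and more cleanly, (b) is immediate from (a) because the left side $j_{\real}(\Gamma,f)$ manifestly depends only on $(\Gamma,f)$ while the right side is $(\Psi,v_x+X')$; hence $v_x+X'$ is forced to be independent of $\Gamma'$. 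The main obstacle I anticipate is part (c): getting the strict-versus-non-strict wall conditions to line up exactly between the lower-closure definition of $B_0$ and the positivity conditions defining membership in $\RR(\Phi,\real)$ and in $\wh\RR_{(\Psi,X')}(\Phi,\real)$, since this requires careful tracking of which roots are positive and which walls of $B$ are retained in $B_0$. Everything else is linear algebra in the coweight basis.
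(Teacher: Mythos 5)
Your proposal follows essentially the same route as the paper: (a) is the bookkeeping identification $\rh\Phi^{a}(\Gamma,f)=\rh\Phi^{a}(\Psi,v_{x}+X')$ traced through the proof of Theorem \ref{thm:rootcoset}, (b) is deduced from (a) exactly as you suggest in your ``cleaner'' alternative, and (c) is proved by translating the constraints on $f$ (non-negativity, strictness on $-\Phi_{+}$, and $\sum_{\alpha\in\Gamma_{i}}c_{\alpha}f(\alpha)=m_{i}$) into the system of inequalities cutting out $B_{0}$ in the coweight coordinates. The only slip is in your description of the extra wall, which should read $\mpair{v,\alpha_{i,0}}\geq -m_{i}$ (equivalently an upper bound against the highest root $-\alpha_{i,0}$), not an upper bound on $\mpair{v,\alpha_{i,0}}$; this does not affect the argument.
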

\begin{proof} For (a), observe that $\rh\Phi^{a}(\Gamma,f)=\rh\Phi^{a}(\Psi,v_{x}+X')$. 
Then (b) follows from (a).  For (c), note that the only restrictions on the 
function  $f$ are that
$f(\alpha)\geq 0$ for all $\alpha\in \Gamma$, that $f(\alpha)>0$ if 
$\alpha\in -\Phi_{+}\cap \Gamma$, and that 
$\sum_{\alpha\in \Gamma_{i}}c_{\alpha}f(\alpha)=m_{i}$ 
if $\Gamma_{i}'\neq \Gamma_{i}$. One checks using this that  
the set of points $v_{x}$ arising as in (b)  is   the set of all 
solutions $v$ of the following system of simultaneous inequalities 
for $v\in \real \Psi$: 
\bee  \begin{cases} \mpair{v,\alpha}\geq 0,&\text{for  $\alpha\in \Gamma'\cap \Psi_{+}$} \\
\mpair{v,\alpha}> 0,&\text{for  $\alpha\in \Gamma'\cap -\Psi_{+}$}\\
\mpair{v,\alpha}\geq -m_{i},&\text{for  $\alpha\in  (\Gamma\setminus \Gamma')\cap \Psi_{i,+}$} \\
\mpair{v,\alpha}> -m_{i},&\text{for  $\alpha\in  (\Gamma\setminus \Gamma')\cap -\Psi_{i,+}$} ,\\
\end{cases}\eee where $\Psi_{+}=\Psi\cap \Phi_{+}$, etc.
and $i$ in the last two inequalities is such that $\Gamma_{i}\neq \Gamma'_{i}$.
On the other hand, the alcove $B$ is defined by the  set of inequalities
above with each non-strict inequality replaced by strict inequality; the 
equations obtained by replacing inequalities by equalities define the 
walls of $B$. From the definition of lower closure,  the set of 
inequalities above also defines  $B_{0}$.
\end{proof}
\subsection{}\label{ss:defsqandg} There is a  map
 $q_{k}\colon \wh \RR_{(\Psi,X')}(\Phi,k)\rightarrow  \RR_{(\Psi,X')}(\Phi,k)$
 defined by $q(\Gamma,\Gamma',f)=(\Gamma,f)$. Write $q=q_{\Int}$. 
 Next define a function  $g\colon D'\rightarrow  \wh \RR_{(\Psi,X')}(\Phi,\real)$ 
 as follows. Let $d\in D'$. There is a unique alcove $B\in \mc{B}$ such that
 $0\in \overline{B}$ and $d\in B_{0}$. Let   $\Gamma'$ be the unique simple 
 system for $\Psi$ such that $B$ is contained in the fundamental chamber of
 $W_{\Psi}$ on $\real \Psi$ with respect to $\Gamma'$. There is a unique subset
 $\Gamma$ of $\Psi$ such that $(\Gamma,\Gamma')$ satisfies  the conditions
 of Lemma \ref{lem:bijlem1}(i)--(iv)  with the condition $K_{i}=m_{i}$ omitted 
 in (iii)--(iv).
 By Lemma \ref{lem:bijlem2}, there is a unique function 
 $f\colon \Gamma\rightarrow \real$ such that 
 $x:=(\Gamma,\Gamma',f)\in \RR_{(\Psi,X')}(\Phi,\real)$ and $v_{x}=d$.
 We define $g(d)=x$.
 
It is a simple consequence of  Lemmas \ref{lem:lowclos}, 
\ref{lem:bijlem1} and \ref{lem:bijlem2}  that  the map $q$ is surjective and
$g$ is a bijection.

\subsection{}We now come to the main result of this section.

\begin{theorem}\label{thm:bij}
%\mar{thm:bij} 
 The  bijection $ \RR'_{(\Psi,X')}(\Phi,\real)\rightarrow  \RR_{(\Psi,X')}(\Phi,\real)$ given 
by restriction of $j_{\real}^{-1}$ is given by 
$j_{\real}^{-1}(\Psi,X)=q  g  k   h(X)$ for $X\in P_{\real}(\Psi)/X'$,
where $q$ and $g$ are the maps defined in \S\ref{ss:defsqandg}, $k$ is defined in 
\S\ref{ss:defk} and $h$ in \S\ref{ss:defh}.
\end{theorem}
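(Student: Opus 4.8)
The plan is to verify the identity $j_{\real}^{-1}(\Psi,X)=q\,g\,k\,h(X)$ by applying $j_{\real}$ to the right-hand side and checking that the result is $(\Psi,X)$; since $j_{\real}$ is a bijection (the $\real$-analogue of Theorem \ref{thm:rootcoset}), this is equivalent to the assertion. The entire argument is a chase of cosets through the four maps, and the only substantive input is Lemma \ref{lem:bijlem2}(a), which evaluates $j_{\real}$ on a triple.

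Concretely, I would fix $X\in P_{\real}(\Psi)/X'$ and set $d:=h(X)$; by the construction of $h$ in \S\ref{ss:defh} this is the representative of $X$ lying in $D$, so $d+X'=X$. Next, by the definition of $k$ in \S\ref{ss:defk}, $k(d)=d+\gamma$ for some $\gamma\in Y'$, with $k(d)\in D'$. Writing $x:=g(k(d))=(\Gamma,\Gamma',f)$, the defining property of $g$ in \S\ref{ss:defsqandg} gives $v_x=k(d)$; hence $q(x)=(\Gamma,f)$ and, by Lemma \ref{lem:bijlem2}(a), $j_{\real}(\Gamma,f)=(\Psi,v_x+X')$. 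It then remains only to identify the coset $v_x+X'$ with $X$.

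For that final step I would compute $v_x+X'=k(d)+X'=d+\gamma+X'$ and observe that $\gamma\in Y'\subseteq X'$, so that $d+\gamma+X'=d+X'=X$. The inclusion $Y'\subseteq X'$ holds componentwise: each $Y_i'$ is $m_iQ(\Psi_i)$, $m_iQ(\Psi_i^\circ)$, or $0$, while the corresponding $X_i'$ is $m_iP(\Psi_i)$, $m_iP(\Psi_i^\circ)$, or $0$, and the coroot lattice is always contained in the coweight lattice. This yields $j_{\real}(q\,g\,k\,h(X))=(\Psi,X)$, hence $q\,g\,k\,h(X)=j_{\real}^{-1}(\Psi,X)$.

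Because all the genuine content has already been deposited in Lemmas \ref{lem:lowclos}, \ref{lem:bijlem1}, and \ref{lem:bijlem2} and in the constructions of $h,k,g,q$, I do not anticipate a real obstacle in the derivation itself. The only point that needs care is confirming that the composite is well defined, i.e. that each intermediate object lies in the domain of the next map: that $k(d)\in D'$ so $g$ applies, and that $g(k(d))$ lands in $\wh\RR_{(\Psi,X')}(\Phi,\real)$ so that $q$ returns an element of $\RR_{(\Psi,X')}(\Phi,\real)$. These are guaranteed by the stated ranges of the maps, after which the coset chase above finishes the proof.
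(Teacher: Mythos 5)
Your proposal is correct and follows essentially the same route as the paper: apply $j_{\real}$ to the composite, use Lemma \ref{lem:bijlem2}(a) to evaluate $j_{\real}(\Gamma,f)=(\Psi,v_x+X')$, and conclude via $v_x - h(X)\in Y'\subseteq X'$ that the coset is $X$. The only difference is that you spell out the inclusion $Y'\subseteq X'$ componentwise, which the paper simply asserts.
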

\begin{proof} Write  $h(X)=d$, so $X=d+X'$. Set $g(d)=d'$, so 
$d-d'\in Y'\subseteq X'$ and $X=d'+X'$.
Set $g(d')=(\Gamma,\Gamma',f)=x$. By definition of $g$, we have 
$v_{x}=d'$ in Lemma \ref{lem:bijlem2}. Then $j_{\real}qgkh(X)=j_{\real}(\Gamma,f)=(\Psi, v_{x}+X')=(\Psi,d'+X')=(\Psi,X)$, 
and the Theorem follows.\end{proof}

\subsection{} If  $k=\Int$ and $X'$ has finite index in $P(\Psi)$, then evidently
\bee \vert  \RR'_{(\Psi,X')}(\Phi,\Z)\vert=
\vert   \RR_{(\Psi,X')}(\Phi,\Int)\vert \in \Nat.\eee 
This equality implies
various identities in $\Int$ by counting  the points of $ \RR'_{(\Psi,X')}(\Phi,\Z)$ and 
$   \RR_{(\Psi,X')}(\Phi,\Int)$    according to the definitions. 
We shall formulate these identities explicitly below  in the special 
case that  $\Psi=\Phi$ is irreducible; in general, the identities easily 
reduce to those in this special case, by considering indecomposable components of $\Psi$.

Assume that $\Psi=\Phi$ is indecomposable  and  $X'\neq \emptyset$. 
We have either $X'=M P(\Phi)$ or $X'=M P(\Phi^{\circ})$ for some
$M\in \Nat_{>0}$, so $[P(\Phi):X']$ is finite.
Fix the \nip $\Gamma:=\Pi\cup\set{-\omega}$ of $\Phi$  where $\omega$ 
is the highest root of $\Phi$ with respect to $\Pi$ if $X'=P(\Phi)$, and 
$-\ck \omega$ is the highest root of  $\ck \Psi$ with respect to
$\ck \Pi$ if $X'=P(\Psi^{\circ})$.

\begin{lemma} \label{lem:nipstab}
%\mar{lem:nipstab}
 Let $W$ act 
on \nips $\Sigma\subseteq \Phi$ by $(w,\Sigma)\mapsto w(\Sigma)$. 
Then the setwise stabiliser of $\Gamma$ in $W$  satisfies 
 $\text{\rm Stab}_{W}(\Gamma)\cong    P(\Phi)/Q(\Phi)$ and in particular 
 $\vert \text{\rm Stab}_{W}(\Gamma)\vert =f_{\Phi}$.\end{lemma}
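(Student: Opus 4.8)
The plan is to realise $\Gamma$ as the set of inward normal directions to the walls of a fundamental alcove, and to identify $\text{Stab}_W(\Gamma)$ with the alcove-stabiliser $\Omega$ in the extended affine Weyl group, which is already known to be $P/Q$. First I would dispose of the case distinction in the statement. In the case where $-\ck\omega$ is the highest root of $\ck\Phi$, I replace $\Phi$ by $\ck\Phi$ throughout, using that $W(\Phi)=W(\ck\Phi)$, that $w(\Gamma)=\Gamma$ if and only if $w(\ck\Gamma)=\ck\Gamma$ (since $w\ck\gamma=\ck{w\gamma}$ for $w\in W$), and the standard facts $P(\ck\Phi)/Q(\ck\Phi)\cong P(\Phi)/Q(\Phi)$ and $f_{\ck\Phi}=f_{\Phi}$. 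Thus it suffices to treat $\Gamma=\Pi\cup\set{-\omega}$ with $\omega$ the highest root of $\Phi$. By Proposition \ref{prop:affinewequiv}(c), $\Gamma$ is then precisely the set of inward normals to the walls $\set{H_{\alpha,0}\mid\alpha\in\Pi}\cup\set{H_{\omega,1}}$ of the bounded fundamental alcove $A$.

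Set $\Omega:=\set{u\in\wt W^{a}\mid u(A)=A}$. Since $W^{a}$ acts simply transitively on alcoves and $\wt W^{a}/W^{a}\cong P/Q$ by Proposition \ref{prop:affinewequiv}(a), one obtains $\wt W^{a}=W^{a}\rtimes\Omega$ and $\Omega\cong P/Q$. I would then use the linear-part homomorphism $\wt W^{a}=W\ltimes\tau_{P}\to W$. Any $u\in\Omega$ permutes the walls of $A$, so its linear part $w$ permutes their inward normals $\Gamma$; hence the linear-part map restricts to a homomorphism $\ell\colon\Omega\to\text{Stab}_W(\Gamma)$. Its kernel is $\Omega\cap\tau_{P}$, which is trivial since a nonzero translation cannot fix the bounded set $A$, so $\ell$ is injective.

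The crux is the surjectivity of $\ell$. Given $w\in\text{Stab}_W(\Gamma)$, I would show $w(A)$ is a $P$-translate of $A$. Writing $A=\set{v\mid\mpair{v,\gamma}\ge c_\gamma\text{ for }\gamma\in\Gamma}$ with $c_\gamma=0$ for $\gamma\in\Pi$ and $c_{-\omega}=-1$, one computes $w(A)=\set{u\mid\mpair{u,\gamma}\ge c_{w^{-1}\gamma}\text{ for }\gamma\in\Gamma}$, so $w(A)$ has the same inward normals $\Gamma$ as $A$, with offsets permuted by $w$. As $\Gamma$ spans $V$, two simplices with normals $\Gamma$ are translates of one another precisely when the weighted offset sums $\sum_\gamma r_\gamma c_\gamma$ coincide, where $\sum_\gamma r_\gamma\gamma=0$ is the unique positive relation (unique up to scale by \cite[Ch V, \S 3, Lemme 5]{Bour}, normalised to positive integers $r_\gamma$ of gcd $1$). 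Applying the isometry $w$ to this relation and invoking uniqueness shows $w$ preserves the marks $r_\gamma$, whence $\sum_\gamma r_\gamma c_{w^{-1}\gamma}=\sum_\gamma r_\gamma c_\gamma$ and $w(A)=A+v_0$ for some $v_0\in V$. To pin down $v_0$, note that $w(0)=0$ is a vertex of $w(A)$ lying on the walls with normals $\Gamma\setminus\set{\alpha_0}$, where $\alpha_0:=w(-\omega)$; comparing with the vertices $\set{0}\cup\set{\omega_\beta/m_\beta\mid\beta\in\Pi}$ of $A$ described in Lemma \ref{lem:alcove} gives $v_0=-\omega_{\alpha_0}/m_{\alpha_0}$. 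Since the affine node $-\omega$ has mark $r_{-\omega}=1$ and $w$ preserves marks, $m_{\alpha_0}=r_{\alpha_0}=1$, so $v_0=-\omega_{\alpha_0}\in P$. Then $\tau_{-v_0}w\in\wt W^{a}$ fixes $A$ and has linear part $w$, hence lies in $\Omega$ and maps to $w$ under $\ell$. Therefore $\ell$ is an isomorphism and $\text{Stab}_W(\Gamma)\cong\Omega\cong P(\Phi)/Q(\Phi)$, giving $\vert\text{Stab}_W(\Gamma)\vert=[P(\Phi):Q(\Phi)]=f_{\Phi}$ by the definition of the index of connection.

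I expect the main obstacle to be precisely this surjectivity step: upgrading the easy observation ``$w(A)$ has the same wall-normals as $A$'' to the two sharper facts that $w(A)$ is an honest lattice translate of $A$ (rather than merely a similar simplex) and that the translation vector lies in $P$. Both are controlled by the mark-preservation property of $w$, the second being forced by the special node $-\omega$ having mark $1$.
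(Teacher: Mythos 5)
Your proof is correct and follows essentially the same route as the paper's: both identify $\text{Stab}_{W}(\Gamma)$ with the stabiliser of the fundamental alcove (equivalently of $\wh\Pi$) in $\wt W^{a}$ via the linear-part projection and then invoke Bourbaki's isomorphism of that stabiliser with $P(\Phi)/Q(\Phi)$. The only difference is one of detail: the paper dismisses the surjectivity of that projection as readily checked, whereas you supply an explicit argument via mark-preservation and the vertices of $A$ (with the harmless omitted edge case $w(-\omega)=-\omega$, which forces $w=1$).
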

\begin{proof} Suppose  that $\omega$ is the highest root of $\Phi$
(the other case follows from this one by considering the \nip  $\ck \Gamma\subseteq \ck \Phi$).
Let $a\colon \wt W^{a}\rightarrow W$ denote the natural projection.
Using that $\pi(\wt \Pi)=\Gamma$, it follows  that $a$ restricts to a map 
$a'\colon  \text{\rm Stab}_{\wt W^{a}}(\wt \Pi) \rightarrow 
\text{\rm Stab}_{W}(\Gamma)$. Using the definition of $\wt W^{a}$, 
one readily checks that $a'$ is surjective. The   group
 $ \text{\rm Stab}_{\wt W^{a}}(\wt \Pi)=  \text{\rm Stab}_{\wt W^{a}}(A)$ is discussed in detail
(to the extent that its elements are explicitly described)
 in \cite[Ch Vi, \S 2, no. 3]{Bour}. From the results there, one has  in particular that
$ \text{\rm Stab}_{\wt W^{a}}(\ck \Pi)\cong   P(\Phi)/Q(\Phi)$  and 
the restriction of $a$  
 to  $ \text{\rm Stab}_{\wt W^{a}}(\wt \Pi)$ is injective. Hence  
 \bee 
 \text{\rm Stab}_{W}(\Gamma)=
 a( \text{\rm Stab}_{\wt W^{a}}(\wt \Pi))\cong  
 \text{\rm Stab}_{\wt W^{a}}(\wt \Pi) \cong  P(\Phi)/Q(\Phi)
 \eee 
 and the result follows.
\end{proof}
\begin{remark} \label{rem:nipstab}
%\mar{rem:nipstab} 
By considering irreducible components of $\Sigma$,
one can determine (using  the proof of the  above Lemma) the setwise  
stabiliser in $W_{\Sigma}$ of any \nip $\Sigma$ of $\Phi$. We do not go into 
the more delicate question of the stabiliser of $\Sigma $ in $W$.
\end{remark} 
\subsection{} Consider the integers $c_{\alpha}\in \Nat_{\geq 0}$ which
are defined by 
$\sum_{\alpha\in \Gamma}c_{\alpha}\alpha=0$ and $c_{\alpha}=1$.
Define a generalized ``ordered partition  function'' $\p \colon \Int\rightarrow  \Nat$ 
by 
\[\begin{split} 
\p(M)&:=\vert \mset{f\colon \Gamma\rightarrow 
\Nat\mid \sum_{\alpha\in \Gamma}c_{\alpha}f(\alpha)=M}\\ 
&=\vert \mset{f\colon \Gamma'\rightarrow 
\Nat\mid \sum_{\alpha\in \Gamma'}c_{\alpha}f(\alpha)\leq M}.
\end{split}
\]
Define some non-standard descent statistics on $W$  as follows. For $w\in W$, 
define $d(w)=\mset{\alpha\in \Gamma\mid w(\alpha)\in \Phi_{-}}$.
For $i\in \Nat$,  let 
$d_{i}:=\vert \mset{w\in W\mid \sum_{\alpha\in d(w)}c_{\alpha}=i}\vert$.
 Set $h=\sum_{\alpha\in \Gamma}c_{\alpha}$ ($h$ is the 
 Coxeter number of $\Phi$ if $\omega$
 is the highest root of $\Phi$, by \cite[Ch VI, \S 1, no 11, Prop. 31]{Bour}).
 Then $d_{i}=0$ if  $i=0$ or $i\geq h$. 
 \begin{proposition}   
 Let $N'=\vert \Pi_{\text{\rm long}}\vert$ and $N:=\vert \Pi\vert$. For any $M\in \Nat_{>0}$,
 \bee \sum_{i=0}^{h}\frac{d_{i}}{f_{\Phi}}\cdot \p(M-i)=
 \begin{cases}
M^{N },&\text{if $\omega\in \Phi_{\text{\rm long}}$}\\
 M^{N}k_{\Phi}^{N'},&\text{if $\omega\in \Phi_{\text{\rm short}}$.} 
 \end{cases}
 \eee 
 where   $\frac{d_{i}}{f_{\Phi}}\in \Nat$ for all $i$.
 \end{proposition}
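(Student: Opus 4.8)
The plan is to read the identity off from the equality $\vert\RR'_{(\Phi,X')}(\Phi,\Int)\vert=\vert\RR_{(\Phi,X')}(\Phi,\Int)\vert$, which holds because $j_{\Int}$ restricts to the bijection $j_{\Int,\Phi,X'}$. The right-hand and left-hand sides of the asserted identity will be produced by counting the sets $\RR'_{(\Phi,X')}(\Phi,\Int)$ and $\RR_{(\Phi,X')}(\Phi,\Int)$ respectively, directly from their definitions. Since $\Psi=\Phi$ is indecomposable there is a single component, a single admissible lattice $X'$ (namely $MP(\Phi)$ or $MP(\Phi^{\circ})$) and a single affine node, which makes all the bookkeeping uniform.

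For the right-hand side, $\RR'_{(\Phi,X')}(\Phi,\Int)$ is exactly the set of cosets $X=a+X'$ with $a\in P(\Phi)$, so its cardinality is $[P(\Phi):X']$. Using $P(\Phi)=\oplus_{\gamma\in\Pi}\Int\omega_\gamma$ together with the description $X'=\oplus_{\gamma\in\Pi}n_\gamma\Int\omega_\gamma$ and the product formula $n_\gamma=Mk_{1,\gamma}$ of \eqref{eq:admprod}, I obtain $[P(\Phi):X']=\prod_{\gamma\in\Pi}n_\gamma$. If $\omega\in\Phi_{\text{long}}$ (so $X'=MP(\Phi)$) every $k_{1,\gamma}=1$, giving $M^{N}$; if $\omega\in\Phi_{\text{short}}$ (so $X'=MP(\Phi^{\circ})$) one has $k_{1,\gamma}=k_\Phi$ for the $N'$ long simple roots and $k_{1,\gamma}=1$ for the $N-N'$ short ones, giving $M^{N}k_\Phi^{N'}$. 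This reproduces the right-hand side.

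For the left-hand side I would count the pairs $(\Gamma_1,f)\in\RR_{(\Phi,X')}(\Phi,\Int)$ as described in Lemma \ref{lem:bijlem1}: $\Gamma_1=\Gamma'\cup\{\alpha_0\}$ is an affine np subset (with $\Gamma'$ a simple system of $\Phi$ and $\alpha_0$ the appropriate lowest root) and $f$ satisfies $K_1=M$. Writing $\Gamma_1=w\Gamma$ for $w\in W$ identifies such $\Gamma_1$ with the simple systems $\Gamma'=w\Pi$, and by Lemma \ref{lem:nipstab} each $\Gamma_1$ arises from exactly $f_\Phi=\vert\text{Stab}_W(\Gamma)\vert$ elements $w$. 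For fixed $\Gamma_1=w\Gamma$, setting $g=f\circ w$ turns a valid $f$ into a function $g\colon\Gamma\to\Nat$ with $\sum_{\alpha\in\Gamma}c_\alpha g(\alpha)=M$ and $g(\alpha)>0$ precisely for $\alpha\in d(w)$ (since $w\alpha\in\Phi_-$ detects the negative elements of $\Gamma_1$ and the marks are preserved by $w$). Absorbing the forced $+1$'s on $d(w)$ shows the number of admissible $g$ equals $\p(M-\sum_{\alpha\in d(w)}c_\alpha)$. Summing over all $w$ and dividing by $f_\Phi$ then gives
\[\vert\RR_{(\Phi,X')}(\Phi,\Int)\vert=\frac1{f_\Phi}\sum_{w\in W}\p\Big(M-\sum_{\alpha\in d(w)}c_\alpha\Big)=\sum_{i}\frac{d_i}{f_\Phi}\,\p(M-i),\]
which is the left-hand side.

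Finally I would settle the three side-assertions. The quantity $\sum_{\alpha\in d(w)}c_\alpha$ is constant on each right coset $w\,\text{Stab}_W(\Gamma)$, because the symmetries of $\Gamma$ permute its marks; hence each fibre $\{w:\sum_{\alpha\in d(w)}c_\alpha=i\}$ is a union of right $\text{Stab}_W(\Gamma)$-cosets, so $f_\Phi\mid d_i$ and $d_i/f_\Phi\in\Nat$. The vanishing $d_0=0$ holds because $\sum_{\alpha\in d(w)}c_\alpha=0$ forces $d(w)=\emptyset$ (all $c_\alpha>0$), i.e.\ $w\Pi\subseteq\Phi_+$ and $w(-\omega)\in\Phi_+$, which is impossible since the former forces $w=\mathrm{id}$ and then $-\omega\in\Phi_-$. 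Symmetrically $d_i=0$ for $i\ge h$, since the maximal value $h=\sum_{\alpha\in\Gamma}c_\alpha$ requires $d(w)=\Gamma$, forcing $w=w_0$, whereas $w_0(-\omega)\in\Phi_+$. The main obstacle is the careful bookkeeping in the left-hand count, namely verifying the exactly-$f_\Phi$-to-one parametrisation $w\mapsto\Gamma_1=w\Gamma$ and checking that the positivity constraint on $f$ transports precisely onto the descent set $d(w)$; no single hard estimate is involved.
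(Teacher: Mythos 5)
Your proposal is correct and follows essentially the same route as the paper: both sides are obtained by counting $\RR'_{(\Phi,X')}(\Phi,\Int)$ as the index $[P(\Phi):X']$ (the paper phrases this as $\vert D\cap P(\Phi)\vert$) and counting $\RR_{(\Phi,X')}(\Phi,\Int)$ via the $f_\Phi$-to-one parametrisation by $w\in W$ coming from Lemmas \ref{lem:bijlem1} and \ref{lem:nipstab}, with the divisibility $f_\Phi\mid d_i$ proved by the same coset argument using the $\mathrm{Stab}_W(\Gamma)$-invariance of the marks $c_\alpha$. The only quibble is the phrase ``$g(\alpha)>0$ precisely for $\alpha\in d(w)$'': the constraint is one-directional ($g>0$ is \emph{forced} on $d(w)$, merely $\geq 0$ elsewhere), which is what your subsequent count $\p(M-\sum_{\alpha\in d(w)}c_\alpha)$ actually uses.
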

 \begin{proof} For any $\Delta\subseteq \Gamma$, define
 $ d_{\Delta}:=\vert\mset{w\in W\mid d(w)=\Delta }\vert $.
 Note that $d_{\emptyset}=d_{\Gamma}=0$.
To prove the formula, it will suffice to show that 
  \bee 
  \frac{1}{f_{\Phi}}\sum_{\Delta\subseteq 
  \Gamma}d_{\Delta}\cdot \p\Bigl(M-\sum_{\alpha\in \Delta}c_{\alpha}\Bigr)=
  \begin{cases}
M^{N },&\text{if $\omega\in \Phi_{\text{\rm long}}$}\\
 M^{N}k_{\Phi}^{N'},&\text{if $\omega\in \Phi_{\text{\rm short}}$.} 
 \end{cases}
 \eee
 
 The right hand side is 
 $\vert D\cap P(\Phi)\vert= \vert  \RR'_{(\Psi,X')}(\Phi,\Z)\vert$.
 On the other hand, consider  the set  $  \wh \RR_{(\Psi,X')}(\Phi,\Z)$. 
 From Lemma \ref{lem:bijlem1},  one sees that
 a typical element of this set is of the form   $(w(\Gamma),w(\Gamma'), 
 f\circ w^{-1}\colon w(\Gamma)\rightarrow \Nat)$ for  (unique)
  $w\in W$ and $f\colon \Gamma \rightarrow \Nat$  such 
  that $f(\alpha)>0$ if $w(\alpha)\in \Phi_{-}$ and 
  $\sum_{\alpha\in \Gamma}f(\alpha)c_{\alpha}=M$.
 For fixed $w$, the number of  possible functions 
 $f$ is therefore $\p (M-\sum_{\alpha\in \Delta}c_{\alpha})$ where 
 $\Delta=\mset{\alpha\in \Gamma\mid w(\alpha)\in \Phi_{-}}$.  Hence 
 the left hand side is equal to $\frac{1}{f_{\Phi} }
 \vert  \wh \RR_{(\Psi,X')}(\Phi,\Z)\vert$.
 But since $W$ acts simply transitively on the simple systems
 $\Gamma'$ of $\Phi$, we get  from Lemma \ref{lem:nipstab} 
 that each  fiber of the map  $q_{\Int}\colon 
  \wh  \RR_{(\Psi,X')}(\Phi,\Z)\rightarrow   \RR_{(\Psi,X')}(\Phi,\Z) $ given by
  $(\Gamma,\Gamma',f)\mapsto (\Gamma,f)$  has cardinality $f_{\Phi}$  and so
   \bee \vert  \wh  \RR_{(\Psi,X')}(\Phi,\Z)\vert = 
   f_{\Phi} \vert   \RR_{(\Psi,X')}(\Phi,\Z)\vert =
   f_{\Phi} \vert   \RR'_{(\Psi,X')}(\Phi,\Z)\vert.
   \eee  
   This completes the proof of the formula.
   
   It remains to show that  $\frac{d_{i}}{f_{\Phi}}\in \Nat$ for all $i$. 
Let $G:=\text{\rm Stab}_{W}(\Gamma)$. One readily checks that
for $w\in W$, $g\in G$ and $\alpha\in \Gamma$, we have $d(wg)=g^{-1}d(w)$ 
and $c_{\alpha}=c_{g(\alpha)}$. It follows readily from this that
$d'_{i}:=\mset{w\in W\mid \sum_{\alpha\in d(w)}c_{\alpha}=i}$ is a
union of left cosets of $G$ (i.e. satisfies $d'(i)=d'(i)G$) and 
so its order $d_{i}$ is divisible by $\vert G\vert =f_{\Phi}$.  
   \end{proof}
   \begin{remark} Using the longest element of $W$, one easily shows that the sequence $(d_{0}, d_{1},\ldots ,d_{h})$ is 
   symmetric i.e $d_{i}=d_{h-i}$ for $i=0,\ldots, h$. One might conjecture that this sequence is
  also  strictly unimodal i.e. $d_{0}<d_{1}<\ldots <  d_{k}$ where $k$ is the largest integer satisfying
   $2k\leq h-1$.    \end{remark}

 \subsection{}  We discuss the identity in the previous Lemma in the 
 case $\Phi$ is of type $A_{n}$, $n\geq 1$. We identify $W$ with the 
 symmetric group $S_{n+1}$ on $\set{1,\ldots,n+1}$. 
 Consider the sequence $(i_{0,}i_{1},i_{2},\ldots, i_{n+1}):=(n+1,1,2,\ldots, n+1)$.
 For $\sigma\in W$, let 
 \[
 d_{\sigma}=\vert  \mset{k\in \Nat\mid1\leq  k\leq n+1, 
 \sigma(i_{k-1})>\sigma(i_{k})}\vert 
 \] 
 denote the number of cyclically 
 consecutive descents of $\sigma$.
 For $i=0,\ldots, n+1$, let $d_{i}:=
 \vert \mset{\sigma\in W\mid d_{\sigma}=i}\vert$, so $d_{0}=d_{n+1}=0$.
 The identity becomes
 \[  M^{n}=\sum_{i=1}^{n} \frac{d_{i}}{n+1}\binom{M+i-1}{n} \]
 for $M\in \Nat_{\geq 1}$.  For $n=2$, this  is the well known 
 identity expressing square numbers as  a sum of two consecutive triangular numbers. 

\bibliography{dlweylsubx}
\bibliographystyle{plain}

\nobreak

\end{document}